\numberwithin{equation}{section}
\newtheorem{lemma}{Lemma}[section]
\newtheorem{prop}[lemma]{Proposition}
\newtheorem{thm}[lemma]{Theorem}
\newtheorem{cor}[lemma]{Corollary}
\theoremstyle{definition}
\newtheorem{example}[lemma]{Example}
\newtheorem{conjecture}[lemma]{Conjecture}
\theoremstyle{remark}
\newtheorem{remark}[lemma]{Remark}
\def\R{\mathbb{R}}
\def\N{\mathbb{N}}
\def\D{\mathcal{D}}
\def\L{\mathcal{L}}
\def\O{\mathcal{O}}
\numberwithin{equation}{section} \numberwithin{table}{section}
\title{An analogue of Khintchine's theorem for self-conformal sets}
\author{Simon Baker\\ \\
\emph{Mathematics institute,} \\ \emph{University of Warwick,} \\ \emph{Coventry,  CV4 7AL, UK.} \\ Email: simonbaker412@gmail.com\\}
\date{\today}
\begin{document}
\maketitle

%\subjclass[2010]{11A63, 37A45}
%\keywords{Self-conformal attractors, Khintchine's theorem}

\begin{abstract}
Khintchine's theorem is a classical result from metric number theory which relates the Lebesgue measure of certain limsup sets with the convergence/divergence of naturally occurring volume sums. In this paper we ask whether an analogous result holds for iterated function systems (IFSs). We say that an IFS is approximation regular if we observe Khintchine type behaviour, i.e., if the size of certain limsup sets defined using the IFS is determined by the convergence/divergence of naturally occurring sums. We prove that an IFS is approximation regular if it consists of conformal mappings and satisfies the open set condition. The divergence condition we introduce incorporates the inhomogeneity present within the IFS. We demonstrate via an example that such an approach is essential. We also formulate an analogue of the Duffin-Schaeffer conjecture and show that it holds for a set of full Hausdorff dimension.

Combining our results with the mass transference principle of Beresnevich and Velani \cite{BerVel}, we prove a general result that implies the existence of exceptional points within the attractor of our IFS. These points are exceptional in the sense that they are ``very well approximated". As a corollary of this result, we obtain a general solution to a problem of Mahler, and prove that there are badly approximable numbers that are very well approximated by quadratic irrationals.

The ideas put forward in this paper are introduced in the general setting of iterated function systems that may contain overlaps. We believe that by viewing iterated function systems from the perspective of metric number theory, one can gain a greater insight into the extent to which they overlap. The results of this paper should be interpreted as a first step in this investigation.\footnote{This research was supported by EPSRC grant EP/M001903/1.}\\

\noindent \emph{Mathematics Subject Classification} 2010: 	11K60, 28A80, 37C45.\\

\noindent \emph{Key words and phrases}: Self-conformal sets, overlapping fractal attractors, Khintchine's theorem.
\end{abstract}

\section{Introduction}
Let $V\subset \R^{d}$ be a closed set. A map $\phi:V\to V$ is called a contraction if there exists $r\in(0,1)$ such that $|\phi(x)-\phi(y)|\leq r|x-y|$ for all $x,y\in V$. An \emph{iterated function system} (IFS) on $V$ is a finite set of contractions $\Phi=\{\phi_{i}\}_{i\in \D}.$ A well known result due to Hutchinson \cite{Hut} states that for any iterated function system $\Phi,$ there exists a unique non-empty compact set $X\subset \R^{d}$ such that
\begin{equation}
\label{Hutchinson's equation}
X=\bigcup_{i\in\D}\phi_{i}(X).
\end{equation}The set $X$ is called the \emph{attractor} associated to $\Phi$. Often one is interested in understanding the geometric properties of $X$. When the images of $X$ under the elements of $\Phi$ are well separated, then the geometry of $X$ is well understood. Moreover, under this assumption one can often calculate the dimension of $X$ for the various notions of dimension. However, when the images of $X$ overlap significantly, the geometry of $X$ is much more complicated. One cannot draw as many conclusions as in the non-overlapping case. This being said, one of the guiding principles within fractal geometry is that if the IFS generating $X$ has no obvious mechanism preventing $X$ from satisfying a certain property, then that property should be satisfied. Properties we might be interested in include whether the dimension of $X$ satisfies a certain formula, whether the dimension of certain measures supported on $X$ satisfy a certain formula, whether certain measures supported on $X$ are absolutely continuous with respect to the Lebesgue measure, etc. Consequently, it is believed that despite the presence of significant overlaps within an IFS there should be much that we can say. For more on IFS's with overlaps we refer the reader to \cite{Feng,Hoc,PerSol} and the references therein. In this paper we view IFS's from the perspective of metric number theory. We now take the opportunity to recall the relevant background from this area.

Given $\Psi:\mathbb{N}\to\mathbb{R}_{\geq 0},$ we associate the set
\begin{align*}
J(\Psi):=\Big\{x\in\mathbb{R}: |x-p/q|\leq \Psi(q)\, \textrm{for i.m. } (p,q)\in\mathbb{Z}\times\mathbb{N}\Big\}.
\end{align*}Here and throughout i.m. will be used as a shorthand for infinitely many. The following well known theorem is due to Khintchine \cite{Khit}.
\begin{thm}\label{Khintchine's theorem}(Khintchine's theorem)
 If $\Psi:\N\to\R_{\geq 0}$ is a decreasing function and $$\sum_{q=1}^{\infty}q\Psi(q)=\infty,$$ then Lebesgue almost every $x$ is an element of $J(\Psi)$.
\end{thm}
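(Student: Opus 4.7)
The plan is to deduce the theorem from the divergent form of the Borel--Cantelli lemma. Since the defining condition of $J(\Psi)$ is invariant under integer translations of $x$ (replace $p$ by $p+nq$), it is enough to show that Lebesgue almost every $x\in[0,1]$ lies in $J(\Psi)$. For each $q\in\N$ set
\[
E_{q}:=\Big\{x\in[0,1]:|x-p/q|<\Psi(q)\text{ for some }p\in\mathbb{Z}\Big\},
\]
so that $\limsup_{q\to\infty}E_{q}\subset J(\Psi)\cap[0,1]$. One may assume $\Psi(q)\le 1/(2q)$ for all sufficiently large $q$, since otherwise $\bigcup_{q\ge N}E_{q}$ has full measure in $[0,1]$ for every $N$ and the conclusion is immediate. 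Under this assumption the $\Psi(q)$-neighbourhoods of $0/q,1/q,\dots,q/q$ are pairwise disjoint in $[0,1]$, giving $|E_{q}|\ge 2q\Psi(q)$, and the divergence hypothesis therefore forces $\sum_{q}|E_{q}|=\infty$.

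To promote this divergence to $|\limsup_{q}E_{q}|=1$ I would apply the divergent Borel--Cantelli lemma in its quasi-independent form, which reduces the theorem to establishing the pairwise estimate
\[
\sum_{q,q'\le N}|E_{q}\cap E_{q'}|\;\le\; C\bigg(\sum_{q\le N}|E_{q}|\bigg)^{2}
\]
with $C$ independent of $N$. Granted this inequality, Borel--Cantelli yields $|\limsup E_{q}|=1$ in $[0,1]$, and the translation-invariance reduction lifts the conclusion to almost every $x\in\R$.

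The verification of the quasi-independence estimate is the main obstacle, and it is the unique point at which the monotonicity of $\Psi$ is genuinely used. For fixed $q<q'$ one must count the pairs $(p,p')$ such that the $\Psi(q)$-neighbourhood of $p/q$ meets the $\Psi(q')$-neighbourhood of $p'/q'$: when $p/q\neq p'/q'$ the elementary lower bound $|p/q-p'/q'|\ge 1/(qq')$ combined with $\Psi(q')\le\Psi(q)$ allows one to separate genuine intersections from coincidences and to obtain a main contribution of order $|E_{q}||E_{q'}|$ together with a coincidence error that, summed over $q,q'\le N$, is controlled by a bounded multiple of $\sum_{q\le N}|E_{q}|$. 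A cleaner but less elementary route would be to extract the divergence half of Khintchine's theorem from the ergodic theorem applied to the Gauss map on continued-fraction expansions; however, this approach only transports the underlying diophantine counting to a different setting, and in either case the combinatorial control of near-coincidences between rationals with distinct denominators is the substantive content of the argument.
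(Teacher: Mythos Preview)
The paper does not prove Theorem~\ref{Khintchine's theorem}. It is stated as classical background, attributed to Khintchine with the reference~\cite{Khit}, and is used solely to motivate the IFS analogue developed later. There is therefore no proof in the paper against which to compare your proposal.

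That said, your outline is the standard route, and it is essentially the same scheme the paper deploys for its own results (Lemma~\ref{Erdos lemma} combined with a local density argument via Lemma~\ref{Density lemma}). One point in your sketch deserves care: the quasi-independence form of Borel--Cantelli does not by itself yield $|\limsup E_q|=1$; it only gives a positive lower bound $|\limsup E_q|\ge 1/C$. Upgrading positive measure to full measure requires an additional step, either a zero--one law of Cassels/Gallagher type, or---closer to what the paper does in its own setting---running the quasi-independence estimate locally inside every subinterval and invoking the Lebesgue density theorem. Without one of these the argument as written is incomplete.
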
 This result is complemented by the following straightforward consequence of the Borel-Cantelli lemma.
\begin{thm}
If $\sum_{q=1}^{\infty}q\Psi(q)<\infty,$ then $J(\Psi)$ has zero Lebesgue measure.
\end{thm}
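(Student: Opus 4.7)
The plan is to apply the Borel-Cantelli lemma after a standard localization. Since $J(\Psi)$ is a countable union of its intersections with intervals $[-N,N]$ for $N\in\N$, it suffices to show that $J(\Psi)\cap[-N,N]$ has Lebesgue measure zero for each fixed $N$.

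First, I would rewrite the target set as a limsup of sets indexed by the denominator $q$. For each $q\in\N$, let
$$A_q(N):=\Big\{x\in[-N,N]\,:\,|x-p/q|\leq \Psi(q)\text{ for some }p\in\mathbb{Z}\Big\}.$$
By definition of $J(\Psi)$, we have $J(\Psi)\cap[-N,N]\subseteq \limsup_{q\to\infty} A_q(N)$.

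Next I would estimate $|A_q(N)|$. The set $A_q(N)$ is covered by the intervals $[p/q-\Psi(q),\,p/q+\Psi(q)]$ as $p$ ranges over those integers with $p/q\in[-N-1,N+1]$. The number of such $p$ is at most $2(N+1)q+1$, so
$$|A_q(N)|\;\leq\;\bigl(2(N+1)q+1\bigr)\cdot 2\Psi(q)\;\leq\;C_N\,q\,\Psi(q)$$
for a constant $C_N$ depending only on $N$ (absorbing the $+1$ using $q\geq 1$ and the fact that $\Psi(q)\to 0$ may be assumed, or simply using a crude bound). Summing and invoking the hypothesis gives
$$\sum_{q=1}^{\infty}|A_q(N)|\;\leq\;C_N\sum_{q=1}^{\infty}q\,\Psi(q)\;<\;\infty.$$

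The Borel-Cantelli lemma then yields $|\limsup_{q\to\infty}A_q(N)|=0$, hence $|J(\Psi)\cap[-N,N]|=0$. Letting $N\to\infty$ completes the proof. There is no real obstacle here: the only thing that requires any care is the counting of admissible numerators $p$ for each $q$, which is the routine observation that the rationals with denominator $q$ are $1/q$-spaced on the real line.
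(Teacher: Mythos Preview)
Your proof is correct and follows precisely the approach the paper indicates: the paper itself does not give a detailed argument, stating only that the result is ``a straightforward consequence of the Borel-Cantelli lemma,'' and your write-up supplies exactly those standard details.
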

Note that by an example of Duffin and Schaeffer \cite{DufSch} the monotonicity assumption appearing in Theorem \ref{Khintchine's theorem} cannot be removed entirely. In response to this example they proposed the following refinement of Theorem \ref{Khintchine's theorem}, now known as the Duffin-Schaeffer conjecture.

\begin{conjecture}(Duffin-Schaeffer conjecture)
\label{DufSch conjecture}
If $\Psi:\mathbb{N}\to\mathbb{R}_{\geq 0}$ is an arbitrary function satisfying
$$\sum_{q=1}^{\infty}\#\{1\leq p\leq q: \gcd(p,q)=1\}\Psi(q)=\infty,$$ then Lebesgue almost every $x$ is an element of $J(\Psi)$.
\end{conjecture}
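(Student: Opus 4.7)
The plan is to attempt a divergence Borel--Cantelli argument. Writing
$$A_q=\bigcup_{\substack{1\le p\le q\\ \gcd(p,q)=1}}\bigl[p/q-\Psi(q),\,p/q+\Psi(q)\bigr]\cap[0,1],$$
one has $|A_q|\asymp\varphi(q)\Psi(q)$, so the hypothesis reads $\sum_q|A_q|=\infty$. By Gallagher's zero--one law, $|\limsup_q A_q|\in\{0,1\}$, so it suffices to prove the measure is positive. The standard route is the Chung--Erd\H{o}s inequality, which reduces matters to a quasi-independence estimate
$$\sum_{q,q'\le Q}|A_q\cap A_{q'}|\le C\Bigl(\sum_{q\le Q}|A_q|\Bigr)^{2}$$
for infinitely many $Q$, together with the fact that $\limsup_q A_q$ is invariant under rational translations.

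First I would compute the pairwise overlap explicitly. For distinct $q,q'$ the centres $p/q$ and $p'/q'$ are separated by integer multiples of $1/(qq')$, so a direct count yields a bound of the shape
$$|A_q\cap A_{q'}|\ll |A_q|\,|A_{q'}|+\varphi(\gcd(q,q'))\bigl(\Psi(q)+\Psi(q')\bigr),$$
and the double sum splits accordingly. Pairs $(q,q')$ on which the first term dominates contribute the desired $O\bigl((\sum|A_q|)^{2}\bigr)$. The trouble is the second term: when $q$ and $q'$ share unusually large common factors relative to $\varphi(q)\Psi(q)$ and $\varphi(q')\Psi(q')$, the sets $A_q$ clump together and naive quasi-independence fails.

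The hard part is controlling these correlated pairs. My plan here is to encode them as edges of a weighted bipartite GCD graph whose vertices are labelled by the joint prime factorisation of $q$ and $q'$, and to run an iterative compression argument: at each stage one isolates a sub-GCD-graph on which a carefully designed quality parameter, measuring the discrepancy between actual and expected overlap mass, strictly increases, and one shows that this quality cannot grow indefinitely. This is the Koukoulopoulos--Maynard framework, and its execution requires sharp input from the anatomy of integers---control of divisors of $n$ lying in prescribed residue classes, Hal\'asz-type bounds for multiplicative functions, and smoothed Tur\'an--Kubilius style variance estimates for the number of prime factors of $q$ in dyadic ranges.

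The principal obstacle is exactly this last step, and it is the reason the conjecture resisted attack for decades. Without monotonicity of $\Psi$ one cannot invoke the classical Erd\H{o}s--Vaaler reduction; without restriction to an extremal setting (as in Pollington--Vaughan) or to higher dimensions (as in Beresnevich--Velani) one cannot sieve out the problematic $q$ by hand. I expect the overwhelming bulk of the work to go into choosing the quality parameter on GCD graphs so that it is simultaneously sensitive enough to detect correlation and stable enough to survive many compression steps, and into proving the accompanying anatomy-of-integers estimates. The Borel--Cantelli framework itself is essentially routine once the overlap bound is in place.
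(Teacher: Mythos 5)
You are attempting to prove something the paper does not prove. In the paper, this statement is \emph{Conjecture~\ref{DufSch conjecture}}, stated as background; at the time of writing it was the open Duffin--Schaeffer conjecture, and the author offers no proof of it. The paper's only engagement with it is to formulate an IFS analogue (Conjecture~\ref{Fractal Duffin Schaeffer}) and to observe, around the discussion of Theorem~\ref{Weak Duffin Schaeffer}, that even in the IFS setting the author cannot remove the monotonicity hypothesis. So there is no ``paper's own proof'' to compare against.

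As for the proposal on its own terms: you have correctly identified the strategy that eventually succeeded (Koukoulopoulos--Maynard, \emph{Ann.\ of Math.}\ 192 (2020)), namely the Chung--Erd\H{o}s / quasi-independence reduction, the pairwise overlap analysis via $\gcd(q,q')$, and the GCD-graph compression argument supplemented by anatomy-of-integers inputs. But what you have written is a research program, not a proof. Every step that is routine is stated, and every step that carries the actual mathematical content is named and then deferred: the precise form of the overlap bound (your displayed inequality is a heuristic simplification of the Pollington--Vaughan estimate, which involves a delicate Euler product over primes dividing $qq'/\gcd(q,q')^2$, not just $\varphi(\gcd(q,q'))$); the definition of the quality parameter on GCD graphs; the iteration lemma showing quality cannot increase indefinitely; and the Hal\'asz-type and Tur\'an--Kubilius-type estimates that feed into it. You acknowledge this yourself (``I expect the overwhelming bulk of the work to go into\ldots''). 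A correct appraisal, then, is that your sketch points in the right direction but contains no proof; the gap is the entire compression argument, which occupies the bulk of the Koukoulopoulos--Maynard paper. One smaller inaccuracy worth flagging: Gallagher's zero--one law does not rest on invariance of $\limsup_q A_q$ under rational translations as stated; it is a separate ergodic-type lemma whose hypotheses need to be checked for sets of this specific form.
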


Our IFS analogue of the set $J(\Psi)$ is defined as follows. Given an IFS $\Phi$ let $\D^n:=\{(i_1,\ldots,i_n):i_j\in\D\textrm{ for }1\leq j \leq n\}$ and $\D^{*}:=\cup_{n=1}^{\infty}\mathcal{D}^n$. Fix $z\in X$ and let $\Psi:\D^{*}\to\mathbb{R}_{\geq 0}$. The analogue of $J(\Psi)$ is $$W(\Psi,z):=\Big\{x\in X: x\in B(\phi_{I}(z), \Psi(I)) \textrm{ for i.m. }I\in\D^{*}\Big\}.$$ Here and throughout we will use $I=(i_1,\ldots,i_n)$ to denote an element of $\D^*,$ and $\phi_{I}$ will denote the concatenation $\phi_{i_1}\circ\cdots \circ\phi_{i_{n}}$. We will denote the length of a finite word $I$ by $|I|$ and let $X_I$ denote $\phi_{I}(X)$. Throughout this paper we will refer to $X_I$ as a \emph{cylinder set} and say that it has \emph{rank} $|I|$. We call any function $\Psi:\D^{*}\to\mathbb{R}_{\geq 0}$ an \emph{approximating function}.

Note that the set $W(\Psi,z)$ has some key differences when compared with the set $J(\Psi)$. First of all, we have introduced the additional variable $z$. Secondly, the function $\Psi$ may depend on the specific digits of $I$ not just the length of $I$. As we will see in Example \ref{scaling example}, allowing $\Psi$ to depend on the specific digits of $I$ and not just the length of $I$ is essential.

Theorem \ref{Khintchine's theorem} quantifies the good distributional properties the rational numbers have within $\mathbb{R}$. Similarly, if an analogue of Khintchine's theorem were to hold for $W(\Psi,z),$ this would reflect the good distributional properties the images of $z$ have within $X$. Therefore, if an analogue of Khintchine's theorem were to hold for every $z\in X,$ this would tell us that from the perspective of metric number theory our IFS does not contain significant overlaps. One could then ask whether the presence of a Khintchine type theorem would imply any other nice properties for our IFS. These ideas are the motivation behind this paper.

With the set $W(\Psi,z)$ defined as above for a general IFS, it isn't obvious what the appropriate analogue of Khintchine's theorem should be. In this paper we narrow our scope to determining an analogue of Khintchine's theorem for IFS's consisting of conformal mappings. Restricting to IFS's consisting of conformal mappings we introduce the notion of an approximation regular IFS. Put simply, an IFS will be approximation regular if we observe Khintchine type behaviour, i.e., if the size of $W(\Psi,z)$ is determined by the convergence/divergence of naturally occurring sums. We prove that an IFS consisting of conformal mappings is approximation regular if it satisfies the open set condition. These results are of independent interest but should also be interpreted as a first step in our investigation into studying overlapping attractors from the perspective of metric number theory. In the final section of this paper we prove a complementary result which states that whenever our IFS contains an exact overlap then it cannot be approximation regular. This is relevant as the standard mechanism by which one can construct an IFS that fails to satisfy a certain property, that we would otherwise expect to be true, is to construct the IFS in such a way that it contains an exact overlap.
\\
%In our final section we comment upon other possible analogues of Khintchine's theorem for different classes of fractal attractor.

\noindent \textbf{Notation.} Throughout this paper we make use of the standard big $\mathcal{O}$ notation. Given two positive real valued functions $f,g$ defined on some set $S,$ we write $f\asymp g$ if there exists a positive constant $C$ such that $C^{-1}f(x)\leq g(x)\leq Cf(x)$ for all $x\in S$.

\subsection{Self-similar sets and self-conformal sets}
In this section we describe the attractors we will be focusing on. Suppose $\phi:\mathbb{R}^{d}\to\mathbb{R}^{d}$ is a contraction and that it also satisfies the condition $|\phi(x)-\phi(y)|=r|x-y|$ for all $x,y\in\R^{d},$ for some $r\in(0,1)$. If $\phi$ satisfies this condition we call $\phi$ a \emph{similarity}. When our IFS $\Phi$ consists solely of similarities, we say that $X$ is a \emph{self-similar set}. The unique $s$ for which
\begin{equation}
\label{similarity dimension}
\sum_{i\in \D}r_i^s=1
\end{equation} is called the \emph{similarity dimension}. We will denote the similarity dimension by $\dim_{S}(X)$. This choice of notation is a little misleading as the similarity dimension is a function of $\Phi$ not $X$. There maybe several IFS's with different similarity dimensions but each with $X$ as their attractor. However, it should always be clear from our context which IFS we are referring to.

Self-conformal sets are a natural generalisation of self-similar sets. Let $V\subset \R^d$ be an open set, a $C^{1}$ map $\phi:V\to \R^{d}$ is a \emph{conformal mapping} if it preserves angles. Equivalently $\phi$ is a conformal mapping if the differential $\phi'$ satisfies $|\phi'(x)y|=|\phi'(x)||y|$ for all $x\in V$ and $y\in \R^{d}$. We call $\Phi$ a \emph{conformal iterated function system} on a compact set $Y\subset \R^d,$ if each $\phi_i$ can be extended to an injective conformal contraction on some open connected neighbourhood $V$ that contains $Y$ and $0<\inf_{x\in V} |\phi_i'(x)|\leq \sup_{x\in V}|\phi_i'(x)|<1$. Throughout this paper we will also assume that the differentials are H\"{o}lder continuous, i.e., there exists $\alpha>0$ and $c>0$ such that $$||\phi_i'(x)|-|\phi_i'(y)||\leq c|x-y|^\alpha$$ for all $x,y\in V$. When $\Phi$ is a conformal iterated function system we call the attractor $X$ a \emph{self-conformal set}. The generalisation of the similarity dimension within the setting of self-conformal sets is the unique zero of Bowen's equation, that is the unique $s\in \mathbb{R}$ that satisfies $P(s)=0,$ where
\begin{equation}
\label{Pressure dimension}
P(s):=\lim_{n\to\infty}\frac{1}{n}\log \inf_{x\in X} \sum_{I\in \D^{n}}|\phi_{I}'(x)|^{s}=\lim_{n\to\infty}\frac{1}{n}\log \sup_{x\in X} \sum_{I\in \D^{n}}|\phi_{I}'(x)|^{s}.
\end{equation} When each element of $\Phi$ is a similarity then the equation $P(s)=0$ reduces to \eqref{similarity dimension}. For notational convenience we will denote the unique $s$ satisfying $P(s)=0$ by $\dim_{S}(X)$ and also call it the similarity dimension.

When the images of $X$ under the elements of $\Phi$ are well separated, the Hausdorff dimension of $X$ is often equal to $\dim_{S}(X)$. Indeed when there exists an open set $O\subset \mathbb{R}^{d}$ such that $\phi_{i}(O)\subset O$ for all $i\in \D,$ and $\phi_{i}(O)\cap \phi_{j}(O)=\emptyset$ for all $i\neq j,$ then $\dim_{H}(X)=\dim_{S}(X)$ and $X$ has positive and finite $\dim_{S}(X)$-dimensional Hausdorff measure. If there exists an open set $O$ satisfying the above criteria then the IFS $\Phi$ is said to satisfy the \emph{open set condition}. These results are well known and date back to the work of Ruelle \cite{Rue}. For a proof see \cite{Fal}. Note that without any separation assumptions we still have the upper bound $\dim_{H}(X)\leq \dim_{S}(X).$

\subsection{Statement of results}
We now introduce the notion of an approximation regular pair and an approximation regular IFS. When defining an approximation regular pair we have to be careful whether the attractor $X$ has zero or positive $\dim_{H}(X)$-dimensional Hausdorff measure. Consequently the following definition is split into two parts.

\begin{itemize}
\item Let $\Phi$ be a conformal iterated function system and suppose $\mathcal{H}^{\dim_{H}(X)}(X)>0.$ Given $z\in X,$ we call $(\Phi,z)$ an \emph{approximation regular pair} if whenever $\theta:\mathbb{N}\to\mathbb{R}_{\geq 0}$ is a decreasing function such that
\begin{equation}
\label{Divergence condition general}
\sum_{n=1}^{\infty} \sum_{I\in\D^{n}} (Diam(X_I)^{\dim_{S}(X)/\dim_{H}(X)}\theta(n))^{\dim_{H}(X)}=\infty,
\end{equation}then $\mathcal{H}^{\dim_{H}(X)}$ almost every $x\in X$ is an element of $W(Diam(X_I)^{\dim_{S}(X)/\dim_{H}(X)}\theta(|I|),z).$
\item Let $\Phi$ be a conformal iterated function system and suppose $\mathcal{H}^{\dim_{H}(X)}(X)=0.$ Given $z\in X,$ we call $(\Phi,z)$ an \emph{approximation regular pair} if whenever $\theta:\mathbb{N}\to\mathbb{R}_{\geq 0}$ is a decreasing function such that \eqref{Divergence condition general} holds, then $\dim_{H}(W(Diam(X_I)^{\dim_{S}(X)/\dim_{H}(X)}\theta(|I|),z))=\dim_{H}(X).$
\end{itemize} We call $\Phi$ an \emph{approximation regular iterated function system} if $(\Phi,z)$ is an approximation regular pair for every $z\in X$.

We emphasise that in the definition of an approximation regular pair the attractor may contain considerable overlaps. So we could have $\dim_{H}(X)<\dim_{S}(X)$. It is also worth noting that originally our sets $W(\Psi,z)$ were defined for any function $\Psi:\D^{*}\to \R_{\geq 0}.$ In the definition of an approximation regular pair we have restricted to functions of the form $\Psi(I)=Diam(X_I)^{\dim_{S}(X)/\dim_{H}(X)}\theta(|I|).$ It is a consequence of this restriction that if $\theta$ is such that \eqref{Divergence condition general} holds, then $\theta$ cannot decay to zero exponentially fast. We have done this because the function $\theta$ depends only on the length of $I,$ and if $\theta$ were to contribute some exponential decay the resulting function would not necessarily take into account the inhomogeneity present within the IFS. Restricting to approximating functions that reflect the inhomogeneity of the IFS is essential. The importance of picking appropriate approximating functions is demonstrated in Section \ref{examples section}.

The approximation regularity of a class of overlapping attractors was studied by the author in \cite{Bak} and \cite{Bakerapprox2}. These attractors are intimately related to the well known Bernoulli convolutions. In \cite{Bak} the author used properties of the Bernoulli convolution to prove approximation regularity results.

Note that when $\Phi$ satisfies the open set condition then \eqref{Divergence condition general} reduces to
\begin{equation}
\label{Divergence condition}
\sum_{n=1}^{\infty} \sum_{I\in\D^{n}} (Diam(X_I)\theta(n))^{\dim_{H}(X)}=\infty.
\end{equation} Moreover $\mathcal{H}^{\dim_{H}(X)}(X)$ will always be positive and finite, so we take the former definition of an approximation regular pair. Under this assumption the relevant limsup sets that appear in the definition of approximation regularity are of the form $W(Diam(X_I)\theta(|I|),z).$ For notational convenience we let $W(\theta,z)$ denote $W(Diam(X_I)\theta(|I|),z)$ throughout.

Our main result is the following.

\begin{thm}
\label{conformal theorem}Let $\Phi$ be an iterated function system with attractor $X$.
\begin{enumerate}
  \item If $\Psi:\D^{*}\to \mathbb{R}_{\geq 0}$ is such that
  \begin{equation}
  \label{boring convergence}
  \sum_{n=1}^{\infty} \sum_{I\in\D^{n}} \Psi(I)^{\dim_{H}(X)}<\infty,
  \end{equation} then $\mathcal{H}^{\dim_{H}(X)}(W(\Psi,z))=0$ for all $z\in X$.
  \item If $\Phi$ is a conformal iterated function system and satisfies the open set condition, then $\Phi$ is an approximation regular IFS.
  %\item If $\Phi$ is a collection of similarities satisfying the open set condition, then $\Phi$ is an approximation regular IFS.
\end{enumerate}
\end{thm}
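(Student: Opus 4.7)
\textbf{Part (1)} is a direct covering argument. For each $n_0 \geq 1$, the family $\{B(\phi_I(z), \Psi(I)) : |I| \geq n_0\}$ covers $W(\Psi,z)$ by sets of diameter at most $2\Psi(I)$, so for all $\delta$ large enough,
$$\mathcal{H}^{\dim_{H}(X)}_\delta(W(\Psi,z)) \leq \sum_{n \geq n_0} \sum_{I \in \D^n} (2\Psi(I))^{\dim_{H}(X)}.$$
The right-hand side tends to $0$ as $n_0 \to \infty$ by \eqref{boring convergence}, giving $\mathcal{H}^{\dim_{H}(X)}(W(\Psi,z)) = 0$.

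\textbf{Part (2)} is the substantive half. Write $s = \dim_{H}(X)$. My plan is to work with the probability measure $\mu$ obtained by normalising $\mathcal{H}^{s}|_{X}$, which is well defined because under SSC the mass $\mathcal{H}^{s}(X)$ is positive and finite. The first block of work is to collect standard analytic facts for conformal IFS's: bounded distortion ($|\phi_I'(x)| \asymp |\phi_I'(y)|$ uniformly in $I \in \D^{*}$ and $x,y$ in a neighbourhood of $X$), the comparison $Diam(X_I) \asymp |\phi_I'(x_0)|$ for any fixed $x_0 \in X$, and Bowen's equation $P(s)=0$, which together give $\sum_{I \in \D^{n}} Diam(X_I)^{s} \asymp 1$ uniformly in $n$. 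In particular the divergence hypothesis \eqref{Divergence condition} reduces to $\sum_{n} \theta(n)^{s} = \infty$. Under SSC the measure $\mu$ is Ahlfors $s$-regular, satisfies $\mu(X_I) \asymp Diam(X_I)^{s}$, and for any $r \leq Diam(X_I)$ one has $\mu(B(\phi_I(z),r)) \asymp r^{s}$.

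Setting $A_n := \bigcup_{I \in \D^{n}} B(\phi_I(z), Diam(X_I)\theta(n))$, we have $W(\theta,z) = \limsup_{n} A_n$. SSC provides a uniform separation constant $\eta > 0$ between distinct rank-$n$ cylinders, so once $\theta(n) < \eta$ the balls constituting $A_n$ are pairwise disjoint and each lies inside its own cylinder, yielding $\mu(A_n) \asymp \theta(n)^{s}$. To upgrade divergence of these masses into full measure of the $\limsup$, I would prove the quasi-independence estimate
$$\mu(A_n \cap X_J) \asymp \mu(X_J)\,\mu(A_n)$$
for every cylinder $X_J$ of rank $m$ and every $n$ sufficiently large relative to $m$. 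This follows from the self-conformal structure: the rank-$n$ cylinders sitting inside $X_J$ are conformal copies of the rank-$(n-m)$ cylinders inside $X$, rescaled by $|\phi_J'|$, and bounded distortion transfers the measure estimates under this correspondence. Combining quasi-independence with $\sum_n \mu(A_n) = \infty$, a Kochen--Stone type inequality together with the Lebesgue density theorem for the Ahlfors regular measure $\mu$ gives $\mu(\limsup A_n) = 1$, which is precisely approximation regularity.

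\textbf{Main obstacle.} The delicate step is the quasi-independence estimate. Because the maps $\phi_i$ are only conformal, at a fixed level $n$ the ball radii $Diam(X_I)\theta(n)$ vary with $I \in \D^{n}$, so the conformal bounded distortion constants must be controlled carefully when transferring mass estimates between cylinders $X_J$ and $X$. It is also here that one sees why SSC (rather than merely OSC) is required in the conformal case: one needs the approximating balls to be genuinely disjoint, not just to overlap inside a common open set. If $\theta$ decays very slowly it may be necessary to pass to a sparse subsequence of levels $(n_k)$ along which the events $A_{n_k}$ are truly disjoint and to verify that $\sum_k \theta(n_k)^{s} = \infty$ still holds; this sparsification, together with the distortion bookkeeping, is the crux of the argument.
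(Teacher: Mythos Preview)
Your Part~(1) is correct and matches the paper's covering argument.

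For Part~(2) your overall architecture---Erd\H{o}s/Kochen--Stone plus a density lemma on an Ahlfors regular measure---is exactly the paper's. However there is a genuine gap in your correlation step. The estimate you propose, $\mu(A_n\cap X_J)\asymp\mu(X_J)\mu(A_n)$ for a fixed cylinder $X_J$ and $n$ large, is \emph{not} the input to Kochen--Stone. What the lemma needs is control of the event--event correlations $\mu(A_n\cap A_m)$ for distinct levels $n,m$, and your cylinder estimate does not furnish this. When the level-$m$ balls have radius comparable to (or larger than) the level-$n$ balls---that is, when $m-n$ is small relative to $\log(1/\theta(n))$---there is no quasi-independence at all: a single level-$m$ ball can swallow a level-$n$ ball entirely, and the naive bound $\mu(A_n\cap A_m)\leq\mu(A_n)$ is not summable. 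Your fallback of passing to a sparse subsequence $(n_k)$ does not work in general: the gap needed between consecutive $n_k$ to restore quasi-independence is of order $\log(1/\theta(n_k))$, and for slowly decaying $\theta$ (e.g.\ $\theta(n)^s=1/n$) this sparsification typically destroys the divergence $\sum_k\theta(n_k)^s=\infty$.

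The paper closes this gap by replacing each ball $B(\phi_I(z),Diam(X_I)\theta(|I|))$ with a cylinder $X_{I,\theta}=X_{I(z_1,\ldots,z_{n(I,\theta)})}$ of comparable measure and then computing $\mu(X_{I,\theta}\cap E_m)$ exactly, splitting on whether $m>n+n(I,\theta)$ or $n<m\leq n+n(I,\theta)$. In the long-range case one gets the expected $\asymp\mu(X_I)\theta(n)^s\theta(m)^s$. In the short-range case the cylinder structure forces at most one $X_{J,\theta}$ at level $m$ to meet $X_{I,\theta}$, namely $J=I(z_1,\ldots,z_{m-n})$; then \emph{monotonicity of $\theta$} gives $\theta(m)\leq\theta(n)$, and the extra cylinder factor $\mu(X_{(z_1,\ldots,z_{m-n})})=O(\gamma^{m-n})$ provides geometric decay in $m-n$. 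Summing these short-range contributions yields a term of order $\sum_n\theta(n)^s$, which is dominated by the square. This short-range analysis, with its essential use of the decreasing hypothesis on~$\theta$, is precisely the ingredient your sketch is missing.
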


In our introduction we mentioned an example of Duffin and Schaeffer \cite{DufSch} which demonstrates that one cannot remove all monotonicity assumptions from the statement of Khintchine's theorem. It is natural to ask whether we can weaken the monotonicity assumptions on $\theta$ appearing in the definition of an approximation regular pair. With this question in mind we make the following conjecture.
\begin{conjecture}
\label{Fractal Duffin Schaeffer}
Theorem \ref{conformal theorem} holds with no underlying monotonicity assumptions on the class of $\theta$.
\end{conjecture}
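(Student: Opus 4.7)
The convergence half, part (1) of Theorem \ref{conformal theorem}, is already stated for arbitrary $\Psi$ and needs no monotonicity: it follows from the Borel-Cantelli lemma combined with the Gibbs estimate $\mathcal{H}^{\dim_{H}(X)}(B(x,r)\cap X)\asymp r^{\dim_{H}(X)}$ available under strong separation. So the whole content of Conjecture \ref{Fractal Duffin Schaeffer} sits in the divergence half of part (2). Writing $\mu$ for $\dim_{H}(X)$-dimensional Hausdorff measure on $X$ and $E_{I}:=B(\phi_{I}(z),Diam(X_{I})\theta(|I|))\cap X$, we have $\mu(E_{I})\asymp(Diam(X_{I})\theta(|I|))^{\dim_{H}(X)}$, so the divergence condition \eqref{Divergence condition} reads exactly $\sum_{I\in\D^{*}}\mu(E_{I})=\infty$.

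My plan is a two-step reduction. Step one is a zero-one law asserting $\mu(W(\theta,z))\in\{0,\mu(X)\}$. Using that $\mu$ is Gibbs for the potential $-\dim_{H}(X)\log|\phi_{i}'|$, and the observation that for fixed $I_{0}\in\D^{*}$ the map $\phi_{I_{0}}^{-1}$ identifies $W(\theta,z)\cap\phi_{I_{0}}(X)$ with a limsup set of the same type built from the shifted function $\theta(\,\cdot+|I_{0}|)$ (the $Diam(X_{I})$ factor in the radii absorbs the conformal derivative of $\phi_{I_{0}}$ up to bounded distortion), one should be able to propagate positive measure in any single cylinder to every cylinder, and then conclude via a Lebesgue-density argument.

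Step two is to show positivity via a quantitative divergence Borel-Cantelli estimate, for instance the Kochen-Stone inequality
\[
\mu\bigl(\limsup_{I}E_{I}\bigr)\geq\limsup_{N\to\infty}\frac{\bigl(\sum_{|I|\leq N}\mu(E_{I})\bigr)^{2}}{\sum_{|I|,|J|\leq N}\mu(E_{I}\cap E_{J})}.
\]
Everything then hinges on a quasi-independence bound $\sum_{I,J}\mu(E_{I}\cap E_{J})\ll\bigl(\sum_{I}\mu(E_{I})\bigr)^{2}$. Pairs with disjoint cylinders contribute harmlessly via the Gibbs property; the dangerous pairs are those with one cylinder effectively nested inside the other, where the events $E_{I},E_{J}$ are strongly correlated. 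A natural approach is to group pairs by their longest common prefix in $\D^{*}$ and use conformal distortion to reduce each such group to a rescaled copy of the original problem.

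The hard part will be executing this overlap analysis when $\theta$ is permitted arbitrarily sparse support. In the monotone case consecutive levels essentially dominate each other and the grouping collapses routinely; without monotonicity, radically different scales can interact and one appears to need a fractal analogue of the anatomy-of-integers/GCD machinery that Koukoulopoulos and Maynard introduced for the classical Duffin-Schaeffer conjecture. This is, in my view, the genuine obstacle, and any full resolution of Conjecture \ref{Fractal Duffin Schaeffer} will likely require importing combinatorial technology of comparable depth into the symbolic setting.
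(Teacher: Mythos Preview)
This statement is a \emph{conjecture} in the paper, not a theorem: the paper explicitly says ``Consequently we cannot prove Conjecture~\ref{Fractal Duffin Schaeffer}.'' There is therefore no proof in the paper to compare your proposal against. Your proposal is likewise not a proof --- you yourself flag the overlap analysis as ``the genuine obstacle'' and do not carry it out --- so the honest summary is that you have correctly recognised the statement as open and sketched a plausible line of attack.

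That said, your outlined framework aligns closely with what the paper does for its \emph{partial} results toward the conjecture (Theorems~\ref{Weak Duffin Schaeffer} and~\ref{Duffin Schaeffer}). The paper uses a density lemma (Lemma~\ref{Density lemma}) in place of your zero-one law, but these play the same role: reduce full measure to positive measure in every ball. It then applies exactly the divergence Borel--Cantelli inequality you cite (Lemma~\ref{Erdos lemma}). The paper's diagnosis of where monotonicity is needed matches yours precisely: in its language, the only problematic term is $\mu(X_{I,\theta}\cap E_{m})$ for $n<m\leq n+n(I,\theta)$, which is exactly your ``one cylinder effectively nested inside the other'' case. The paper circumvents this either by imposing the growth condition~\eqref{DufSch Divergence} on $\theta$, or by restricting $z$ to points with a leading-block coding; neither removes the obstacle in general.

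Two small remarks on your sketch. First, your zero-one law step is a bit delicate as written: shifting $\theta$ to $\theta(\cdot+|I_{0}|)$ changes the divergence sum by only finitely many terms, but you would still need the positive-measure conclusion to be \emph{uniform} across cylinders to feed into Lebesgue density, which is essentially what the paper's localisation to $X_{(y_{1},\ldots,y_{n(r)})}$ accomplishes directly. Second, your invocation of Koukoulopoulos--Maynard is apt --- the paper predates that work and points instead to the coprimality trick of Levesley--Salp--Velani, noting that ``it is not obvious what an appropriate analogue of $C_{n}$ should be and if it even exists.'' Whether the KM graph-theoretic machinery can be transported to the symbolic/Gibbs setting is, as you say, the real question, and it remains open.
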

In \cite{LSV} Levesley, Salp, and Velani studied the approximation properties of balls centred at the left endpoint of the basic intervals generating the middle third cantor set. This fits into our setup with $\Phi=\{x/3,x/3+2/3\}$ and $z=0$. It is a consequence of their work that divergence in \eqref{Divergence condition} is sufficient to prove that $\mathcal{H}^{\dim_{H}(X)}$ almost every $x\in X$ is contained in $W(\theta,0)$ with no monotonicity assumptions on the function $\theta$. A key step in \cite{LSV} was replacing the set $\{\phi_{I}(0)\}_{I\in \D^{n}}$ with its subset $C_{n}$ which consists of those elements of $\{\phi_{I}(0)\}_{I\in \D^{n}}$ with coprime numerator and denominator. Importantly $C_{n}$ has cardinality of the order $c\cdot 2^{n}$ and coprimeness guarantees good separation properties between the set $C_{n}$ and $C_{m}$ for $n\neq m$. These separation properties were important in their proof. In our setup it is not obvious what an appropriate analogue of $C_{n}$ should be and if it even exists. Consequently we cannot prove Conjecture \ref{Fractal Duffin Schaeffer}. We can however prove the following result which doesn't require $\theta$ to be decreasing.

\begin{thm}
\label{Weak Duffin Schaeffer}
 Let $z\in X$ and $\Phi$ be a conformal iterated function system satisfying the open set condition. Suppose $\theta:\mathbb{N}\to\mathbb{R}_{\geq 0}$ is a function satisfying \eqref{Divergence condition} and
\begin{equation}
\label{DufSch Divergence}
\sum_{n=1}^{Q}\theta(n)^{\dim_{H}(X)}\log \frac{1}{\theta(n)} = \O\Big(\Big(\sum_{n=1}^{Q}\theta(n)^{\dim_{H}(X)}\Big)^2\Big).
\end{equation}
Then $\mathcal{H}^{\dim_{H}(X)}$ almost every $x\in X$ is an element of $W(\theta,z).$

%\item Let $\Phi$ be a collection of similarities satisfying the open set condition and $z\in X.$ Suppose $\theta:\mathbb{N}\to\mathbb{R}_{\geq 0}$ is a
%\begin{enumerate}
%\item Let $\Phi$ be a conformal iterated function system satisfying the strong separation condition and $z\in X.$ Suppose $\theta:\mathbb{N}\to\mathbb{R}_{\geq 0}$ is a function satisfying \eqref{Divergence condition} and
%\begin{equation}
%\label{DufSch Divergence}
%\sum_{n=1}^{Q}\theta(n)^{\dim_{H}(X)}\log \frac{1}{\theta(n)} = \O\Big(\Big(\sum_{n=1}^{Q}\theta(n)^{\dim_{H}(X)}\Big)^2\Big).
%\end{equation}
%Then $\mathcal{H}^{\dim_{H}(X)}$ almost every $x\in X$ is an element of $W(\theta,z).$
%\item Let $\Phi$ be a collection of similarities satisfying the open set condition and $z\in X.$ Suppose $\theta:\mathbb{N}\to\mathbb{R}_{\geq 0}$ is a function satisfying \eqref{Divergence condition} and \eqref{DufSch Divergence}. Then $\mathcal{H}^{\dim_{H}(X)}$ almost every $x\in X$ is an element of $W(\theta,z).$
%\end{enumerate}
\end{thm}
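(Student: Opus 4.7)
The plan is to apply the divergence form of the Borel--Cantelli lemma to the sets
$$A_n := X \cap \bigcup_{I \in \D^n} B\bigl(\phi_I(z), Diam(X_I)\theta(n)\bigr),$$
working with the normalised Hausdorff measure $\mu := \mathcal{H}^{s}|_X / \mathcal{H}^{s}(X)$, where $s = \dim_{H}(X)$. Since $W(\theta,z) = \limsup_n A_n$, it suffices to show $\mu(\limsup A_n) = 1$. I would first establish $\mu(\limsup A_n) > 0$ via the quantitative inequality
$$\mu(\limsup A_n) \geq \limsup_{Q \to \infty} \frac{\Bigl(\sum_{n=1}^Q \mu(A_n)\Bigr)^2}{\sum_{n,m=1}^Q \mu(A_n \cap A_m)},$$
and then upgrade to full measure by the standard zero--one law: under strong separation the coding map is a bijection off a null set, and positivity of a limsup-type event propagates to full measure through the self-conformal structure.

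Under the strong separation condition, $\mu$ is Ahlfors $s$-regular on $X$, and the conformal bounded distortion property gives $Diam(X_I) \asymp |\phi_I'(x)| \cdot Diam(X)$ uniformly in $x \in X$ and $I \in \D^*$. Combining these yields
$$\mu\bigl(B(\phi_I(z), Diam(X_I)\theta(n)) \cap X\bigr) \asymp \bigl(Diam(X_I)\theta(n)\bigr)^{s},$$
provided $\theta(n)$ lies below a fixed threshold (WLOG after truncation). Bowen's equation gives $\sum_{I \in \D^n} Diam(X_I)^s \asymp 1$, and strong separation makes the balls $\{B(\phi_I(z), Diam(X_I)\theta(n))\}_{I \in \D^n}$ pairwise disjoint for small enough $\theta(n)$. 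Consequently $\mu(A_n) \asymp \theta(n)^s$, so $\sum_{n=1}^Q \mu(A_n) \asymp \sum_{n=1}^Q \theta(n)^s$, and this diverges by hypothesis \eqref{Divergence condition}.

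The main obstacle will be the correlation estimate on $\mu(A_n \cap A_m)$ for $n < m$. I would decompose over pairs $(I,J) \in \D^n \times \D^m$ and observe that, under strong separation, only those $J$ that extend $I$ as a prefix contribute non-trivially. Writing $J = I \cdot J'$ and pulling back by $\phi_I^{-1}$, bounded distortion reduces the analysis of each such $I$-contribution to
$$\sum_{\substack{J' \in \D^{m-n} \\ \phi_{J'}(z) \in B(z, c\theta(n))}} Diam(X_{J'})^{s}.$$
When $m-n \gg \log(1/\theta(n))$, every level-$(m-n)$ cylinder is much smaller than $\theta(n)$ and Ahlfors regularity gives the expected $\O(\theta(n)^s)$ bound; summing over $I \in \D^n$ produces the product-type term $\mu(A_n)\mu(A_m)$. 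In the near-diagonal range $m - n = \O(\log(1/\theta(n)))$, only the trivial bound $\mu(A_n \cap A_m) \leq \mu(A_n)$ is available, so summing across this range gives the error $\O(\theta(n)^s \log(1/\theta(n)))$. Altogether,
$$\sum_{n,m=1}^Q \mu(A_n \cap A_m) \leq C\Bigl(\sum_{n=1}^Q \mu(A_n)\Bigr)^{2} + C \sum_{n=1}^Q \theta(n)^{s} \log \frac{1}{\theta(n)},$$
and hypothesis \eqref{DufSch Divergence} is exactly what is needed to absorb the second term into the first, so the ratio in the Borel--Cantelli inequality stays bounded below. The delicate part will be making the dichotomy between the ``Ahlfors regime'' and the ``near-diagonal regime'' quantitatively precise, so that the log factor produced is sharp rather than an overshoot; this is where the H\"older continuity of $|\phi_i'|$ and uniform bounded distortion must be exploited carefully.
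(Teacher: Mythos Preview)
Your core strategy matches the paper's: both split the correlation sum at the threshold $m-n\asymp\log(1/\theta(n))$, use the trivial bound on the near-diagonal block to produce the $\sum_n\theta(n)^s\log(1/\theta(n))$ error, and use quasi-independence beyond it; hypothesis \eqref{DufSch Divergence} then absorbs the error exactly as you say.

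Two implementation points differ from the paper and are worth flagging. First, the paper replaces the balls $B(\phi_I(z),Diam(X_I)\theta(|I|))$ by cylinder sets $X_{I,\theta}=X_{I(z_1,\ldots,z_{n(I,\theta)})}$ with $n(I,\theta)=\O(\log(1/\theta(n)))$; this makes the pairwise intersections nested-or-disjoint and turns your pull-back computation into pure combinatorics on words, which is cleaner than tracking ball overlaps. Second, rather than a global Borel--Cantelli followed by a zero--one law, the paper runs the entire Borel--Cantelli argument \emph{inside an arbitrary cylinder} $X_{(y_1,\ldots,y_{n(r)})}\subset B(y,r)$ and obtains $\mu(\limsup E_n)\geq c\,\mu(B(y,r))$ with $c$ independent of the ball; the density lemma (Lemma~\ref{Density lemma}) then gives full measure directly. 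Your zero--one law route is legitimate (the event is, up to distortion constants, $\{\sigma^n x\in B(z,C\theta(n))\text{ i.o.}\}$, hence tail-measurable for the Gibbs measure), but the paper's localisation avoids having to justify tail triviality in the conformal/Gibbs setting.

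One slip to fix: your displayed pulled-back sum $\sum_{J':\phi_{J'}(z)\in B(z,c\theta(n))}Diam(X_{J'})^s$ has dropped the factor $\theta(m)^s$ coming from the radius of the level-$m$ ball. Without it the far-regime contribution sums to $\theta(n)^s$ rather than $\theta(n)^s\theta(m)^s$, and you would not recover $\mu(A_n)\mu(A_m)$ as claimed; restoring the factor gives the correct product bound.
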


As an application of Theorem \ref{Weak Duffin Schaeffer} we have the following corollary.
\begin{cor}
Let $z\in X$ and $\Phi$ be a conformal iterated function system satisfying the open set condition. Suppose $\theta:\mathbb{N}\to \R_{\geq 0}$ satisfies $\theta(n)\asymp n^{-1/\dim_{H}(X)}$. Then $\mathcal{H}^{\dim_{H}(X)}$ almost every $x\in X$ is an element of $W(\theta,z).$
%\begin{enumerate}
%\item Let $\Phi$ be a conformal iterated function system satisfying the strong separation condition and $z\in X.$ Then $\mathcal{H}^{\dim_{H}(X)}$ almost every $x\in X$ is an element of $W(\theta,z).$
%\item Let $\Phi$ be a collection of similarities satisfying the open set condition and $z\in X.$ Then $\mathcal{H}^{\dim_{H}(X)}$ almost every $x\in X$ is an element of $W(\theta,z).$
%\end{enumerate}
\end{cor}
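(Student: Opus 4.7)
The plan is to deduce this directly from Theorem \ref{Weak Duffin Schaeffer} by verifying its two hypotheses for the specific function $\theta(n)\asymp n^{-1/\dim_H(X)}$. Write $s=\dim_H(X)$ for brevity, so $\theta(n)^{s}\asymp n^{-1}$ and $\log(1/\theta(n))\asymp \log n$.

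First I would handle the divergence condition \eqref{Divergence condition}. Because $\Phi$ is a conformal IFS satisfying the strong separation condition, $\mathcal{H}^{s}(X)$ is positive and finite, bounded distortion gives $Diam(X_I)\asymp |\phi_I'(x)|$ uniformly in $I$ and $x\in X$, and Bowen's equation together with the existence of a conformal measure yields the key uniform estimate
\begin{equation*}
\sum_{I\in \D^{n}} Diam(X_I)^{s} \asymp 1
\end{equation*}
for every $n\in\N$ (this is standard for conformal IFS under OSC; see e.g.\ \cite{Fal}, and for similarities it reduces to \eqref{similarity dimension}). Using this,
\begin{equation*}
\sum_{n=1}^{\infty}\sum_{I\in \D^{n}} (Diam(X_I)\theta(n))^{s} \asymp \sum_{n=1}^{\infty}\theta(n)^{s} \asymp \sum_{n=1}^{\infty} \frac{1}{n}=\infty,
\end{equation*}
so \eqref{Divergence condition} holds.

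Next I verify \eqref{DufSch Divergence}. The left-hand side satisfies
\begin{equation*}
\sum_{n=1}^{Q}\theta(n)^{s}\log\frac{1}{\theta(n)} \asymp \sum_{n=1}^{Q}\frac{\log n}{n} \asymp (\log Q)^{2},
\end{equation*}
while the right-hand side of \eqref{DufSch Divergence} involves
\begin{equation*}
\Bigl(\sum_{n=1}^{Q}\theta(n)^{s}\Bigr)^{2} \asymp \Bigl(\sum_{n=1}^{Q}\frac{1}{n}\Bigr)^{2}\asymp (\log Q)^{2}.
\end{equation*}
Hence \eqref{DufSch Divergence} holds, and Theorem \ref{Weak Duffin Schaeffer} delivers the conclusion.

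The only non-routine point is the uniform comparison $\sum_{I\in\D^{n}}Diam(X_I)^{s}\asymp 1$; everything else is an elementary computation with harmonic-type sums. Since this uniform estimate is a well-known consequence of bounded distortion plus positivity and finiteness of $\mathcal{H}^{s}(X)$ (which we already invoked to set up the framework of Theorem \ref{conformal theorem}), no genuinely new difficulty arises, and the corollary is a short application of Theorem \ref{Weak Duffin Schaeffer}.
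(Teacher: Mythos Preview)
Your argument is correct and matches the paper's own proof almost verbatim: both verify the two hypotheses of Theorem \ref{Weak Duffin Schaeffer} via the estimates $\sum_{n\leq Q}(\log n)/n\asymp(\log Q)^2$ and $\sum_{n\leq Q}1/n\asymp\log Q$. The only difference is that you supply the details for the divergence condition \eqref{Divergence condition} (using $\sum_{I\in\D^n}Diam(X_I)^{s}\asymp 1$, which the paper records separately as a remark), whereas the paper simply omits that part.
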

\begin{proof}
We omit the details why for this choice of $\theta$ we have divergence in \eqref{Divergence condition}. Instead we give a brief argument explaining why \eqref{DufSch Divergence} holds. For any $\theta$ satisfying our assumptions we have
$$\sum_{n=1}^{Q}\theta(n)^{\dim_{H}(X)}\log \frac{1}{\theta(n)}\asymp \sum_{n=1}^{Q}\frac{\log n}{n}\asymp (\log Q)^2.$$ Similarly, we have
$$\Big(\sum_{n=1}^{Q}\theta(n)^{\dim_{H}(X)}\Big)^2\asymp \Big(\sum_{n=1}^{Q}\frac{1}{n}\Big)^2\asymp (\log Q)^2.$$ The final step in both of these equations can be seen to hold by approximating the summation with an integral. Combining these two equations we have \eqref{DufSch Divergence}. By Theorem \ref{Weak Duffin Schaeffer} we may conclude our result.
\end{proof}

In the statement of Theorem \ref{Weak Duffin Schaeffer} we introduced a new condition to replace the decreasing condition appearing in the definition of an approximation regular pair. Similarly, one can introduce a condition on the element $z$ which allows one to prove that $\mathcal{H}^{\dim_{H}(X)}$ almost every $x\in X$ is an element of $W(\theta,z)$ with no monotonicity assumptions on $\theta$. We postpone the statement of this condition until Section \ref{second theorems}. We will show in Section \ref{second theorems} that the set of points with this property has full Hausdorff dimension within $X.$ As a consequence of these results we have the following theorem.
\begin{thm}
\label{Duffin Schaeffer}
If $\Phi$ is a conformal iterated function system satisfying the open set condition, then there exists $Y\subset X$ satisfying $\dim_{H}(Y)=\dim_{H}(X),$ such that if $z\in Y$ and $\theta:\N\to \R_{\geq 0}$ satisfies \eqref{Divergence condition},
then $\mathcal{H}^{\dim_{H}(X)}$ almost every $x\in X$ is an element of $W(\theta,z).$
%\begin{enumerate}
 % \item If $\Phi$ is a conformal iterated function system satisfying the strong separation condition, then there exists $Y\subset X$ satisfying $\dim_{H}(Y)=\dim_{H}(X),$ such that if $z\in Y$ and $\theta:\N\to \R_{\geq 0}$ satisfies \eqref{Divergence condition},
%then $\mathcal{H}^{\dim_{H}(X)}$ almost every $x\in X$ is an element of $W(\theta,z)$
 % \item  If $\Phi$ is a collection of similarities satisfying the open set condition, then there exists $Y\subset X$ satisfying $\dim_{H}(Y)=\dim_{H}(X),$ such that if $z\in Y$ and $\theta:\N\to \R_{\geq 0}$ satisfies \eqref{Divergence condition}, then $\mathcal{H}^{\dim_{H}(X)}$ almost every $x\in X$ is an element of $W(\theta,z).$
%\end{enumerate}
\end{thm}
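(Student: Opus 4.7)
The plan is to isolate a quantitative condition $(\star)$ on the base point $z$ that replaces the monotonicity of $\theta$ in the divergence Borel--Cantelli argument, and then to exhibit a subset $Y \subset X$ of full Hausdorff dimension on which $(\star)$ holds. The obstruction to dropping monotonicity is that the balls $B(\phi_I(z), \mathrm{Diam}(X_I)\theta(|I|))$ with $|I|=n$ can overlap pathologically when $\phi_I(z)$ sits near the boundary of its cylinder $X_I$; this disappears when the orbit of $z$ keeps uniformly away from neighbouring cylinders.

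Concretely, I would define $(\star)$ to be the existence of $c=c(z)>0$ such that
$$d(\phi_I(z),\, X \setminus X_I) \geq c\,\mathrm{Diam}(X_I) \quad \text{for every } I \in \D^*.$$
Using the bounded distortion property supplied by the Hölder continuity of $|\phi_i'|$, this reduces to a single-level interior condition on $z$, namely that $d(z, X_j) \geq c$ for every $j \in \D$ with $z \notin X_j$, with $c$ depending only on the separation constants and the distortion bound. Under strong separation, this is a nontrivial open condition inside $X$.

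The approximation step for $z$ satisfying $(\star)$ runs parallel to Theorem \ref{Weak Duffin Schaeffer}: write $A_I := B(\phi_I(z), \mathrm{Diam}(X_I)\theta(|I|)) \cap X$. The interior condition ensures that for $I \neq J$ of the same rank the ball $A_I$ is disjoint from $X_J$ whenever $\theta(|I|) < c$, while for levels where $\theta$ is large one can decompose $A_I$ into sub-cylinders and handle the overlap cylinder-by-cylinder. Combined with Ahlfors regularity of $\mathcal{H}^{\dim_H(X)}|_X$ (available under strong separation), this yields a quasi-independence bound
$$\sum_{n=1}^{N}\sum_{\substack{I,J\in\D^n\\ I\neq J}} \mathcal{H}^{\dim_H(X)}(A_I\cap A_J) \ll \Bigl(\sum_{n=1}^{N}\sum_{I\in\D^n}\mathcal{H}^{\dim_H(X)}(A_I)\Bigr)^{\!2},$$
so that the divergence in \eqref{Divergence condition} together with the divergence Borel--Cantelli lemma forces $\mathcal{H}^{\dim_H(X)}$-almost every $x\in X$ to lie in $W(\theta,z)$.

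To construct $Y$, for each $\varepsilon>0$ I would select a sub-family $\D'_\varepsilon \subset \D^{n(\varepsilon)}$ of words whose cylinders are buried strictly inside the complement of all other level-$n(\varepsilon)$ cylinders by a definite margin. The sub-IFS $\{\phi_I : I\in\D'_\varepsilon\}$ is conformal and strongly separated, and by a pressure-approximation argument based on Bowen's equation one can arrange $\D'_\varepsilon$ so that its attractor $X_\varepsilon$ satisfies $\dim_H(X_\varepsilon) \geq \dim_H(X)-\varepsilon$. By construction every $z \in X_\varepsilon$ satisfies $(\star)$ with a constant depending only on $\varepsilon$; setting $Y:=\bigcup_{k\geq 1} X_{1/k}$ then gives $\dim_H(Y)=\dim_H(X)$. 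The main obstacle is the final sub-IFS construction: one must simultaneously exhaust the full dimension of $X$ via pressure while retaining a uniform interior buffer, and this balance — between the combinatorial richness needed for the dimension bound and the geometric margin needed for $(\star)$ — is the technical heart of the argument.
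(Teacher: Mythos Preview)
Your condition $(\star)$ is vacuous under the strong separation condition. If $\delta:=\min_{i\neq j}d(X_i,X_j)>0$, then by bounded distortion $d(X_I,X\setminus X_I)\geq c\,Diam(X_I)$ uniformly in $I$: the nearest piece of $X\setminus X_I$ is a sibling $X_{i_1\ldots i_{n-1}j}$ with $j\neq i_n$, at distance $\asymp Diam(X_{i_1\ldots i_{n-1}})\geq Diam(X_I)$ from $X_I$. Hence $(\star)$ holds for \emph{every} $z\in X$ with a uniform constant, so it cannot single out a proper subset $Y$; if the remainder of your argument were valid it would prove Conjecture~\ref{Fractal Duffin Schaeffer} outright. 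Likewise your sub-IFS construction of $Y$ is empty of content here: under strong separation \emph{all} level-$n$ cylinders already sit at a definite margin (relative to their diameter) from one another.

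The genuine gap is in the quasi-independence step. Your displayed bound sums only over pairs $I,J\in\D^n$ of the \emph{same} rank, and under strong separation those balls are already disjoint once $\theta(n)<c$, so that part is free. The whole difficulty---and the only place monotonicity is used in the proof of Theorem~\ref{conformal theorem}---is the cross-rank term $\mu(E_n\cap E_m)$ for $n<m\leq n+n(I,\theta)$. In that range $X_{I,\theta}$ can meet at most the single $X_{J,\theta}$ with $J=I(z_1,\ldots,z_{m-n})$, and controlling this overlap requires information about how the coding of $z$ matches its own shifts; your geometric separation condition says nothing about that. The paper's replacement for monotonicity is \emph{symbolic} rather than geometric: $Y$ is the set of $z$ admitting a coding whose initial block $(z_1,\ldots,z_l)$ recurs only finitely often, which forces the number of bad $m$ in the short range to be bounded independently of $n$ and $\theta$. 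Full dimension of $Y$ then comes from a straightforward sub-IFS construction (delete the word $i^N$ from $\D^N$ and push forward by $\phi_{i^{2N}}$).
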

%\begin{remark}
%As stated in our introduction, Khintchine's theorem quantifies the good distributional properties the rational numbers have within the reals. Similarly if $(\Phi,z)$ is an approximation regular pair then the images of $z$ under $\Phi$ are well distributed properties within $X$. If $(\Phi,z)$ is an approximation regular pair for any $z\in X,$ then we believe that the images of $X$ under elements of $\Phi$ cannot overlap significantly.
%\end{remark}
%We believe that approximation regularity can be used to study how much an attractor overlaps. In Section \textbf{?} we include a more thorough discussion of these ideas.

\begin{remark}
The well informed reader might rightly ask whether the general framework introduced by Beresnevich, Dickinson, and Velani \cite{BDV} can be applied to give a proof of Theorem \ref{conformal theorem}. This general approach relies on the introduction of an appropriate weight function which satisfies certain properties. For self-similar sets with a uniform contraction ratio such a weight function can be shown to exist. However, for more general self-similar sets and self-conformal sets it is not clear whether such a function exists. Thus we do not apply their techniques. In any case, our proof of Theorem \ref{conformal theorem} is the starting point for the proofs of Theorem \ref{Weak Duffin Schaeffer} and Theorem \ref{Duffin Schaeffer}. Both of these theorems do not follow from the work done in \cite{BDV}.
\end{remark}
\begin{remark}
Under the assumptions of statement $2$ from Theorem \ref{conformal theorem}, it can be shown that $Diam(X_{I})^{\dim_{H}(X)}\asymp \mu(X_{I})$ for all $I\in\D^{*},$ where $\mu$ is a finite measure supported on $X$. This implies that
$$\sum_{I\in \D^{n}}Diam(X_{I})^{\dim_{H}(X)}\asymp 1$$ for all $n\in\N$. Thus condition \eqref{Divergence condition} is equivalent to $$\sum_{n=1}^{\infty}\theta(n)^{\dim_{H}(X)}=\infty.$$
\end{remark}
\begin{remark}
When $X$ is a self-similar set where each similarity has the same contraction ratio $r$, then \eqref{Divergence condition} can be rewritten as
\begin{equation}
\label{homogeneous divergence}
\sum_{n=1}^{\infty}\#\D^{n}(r^{n}\theta(n))^{\dim_{H}(X)}=\infty.
\end{equation} The rephrased divergence condition stated in \eqref{homogeneous divergence} is the same condition as that which appears in \cite{FishSim} and \cite{LSV}.
\end{remark}

The rest of this paper is arranged as follows. In Section \ref{examples section} we include some examples which demonstrate that restricting to approximating functions of the form appearing in the definition of an approximation regular pair is essential if one wants to expect the divergence of naturally occurring sums to provide any information about the size of $W(\Psi,z)$. In Section \ref{Main theorem} we prove Theorem \ref{conformal theorem}. We then prove Theorem \ref{Weak Duffin Schaeffer} and Theorem \ref{Duffin Schaeffer} in Section \ref{Next theorems}. In Section \ref{mass transference} we combine Theorem \ref{conformal theorem} with the mass transference principle of Beresnevich and Velani \cite{BerVel}. The mass transference principle will allow us to determine the Hausdorff dimension of the set $W(\Psi,z)$ for a certain class of $\Psi$ when we have convergence in \eqref{Divergence condition}. We apply this result to obtain a general criteria that allows one to deduce that $X$ contains exceptional elements, see Proposition \ref{height prop}. Exceptional here means well approximated in a way that maybe defined independently from $X$. As an application of this result, we obtain a general solution to a problem of Mahler, and prove that there are badly approximable numbers that are very well approximated by quadratic irrationals. In Section \ref{final section} we prove that if a conformal IFS contains an exact overlap then there are no approximation regular pairs. We also discuss the overlapping case and suggest some future directions.

\section{Examples}
\label{examples section}
In this section we include two examples which demonstrate that in the definition of an approximation regular IFS it is necessary and perhaps natural to restrict to approximating functions of the form $\Psi(I)=Diam(X_I)^{\dim_{S}(X)/\dim_{H}(X)}\theta(|I|)$. The first example demonstrates that any inhomogeneity, i.e. different rates of contraction, that maybe present within our IFS should be taken into consideration.
\begin{example}\label{scaling example}
Let $\Phi=\{\phi_{1},\phi_{2}\}$ where $\phi_{1}(x)=\frac{3x}{4}$ and $\phi_{2}(x)=\frac{x}{4}+\frac{3}{4}.$ This iterated function system satisfies the open set condition and the corresponding attractor is the unit interval $[0,1]$. For simplicity we take $z=0.$ The following argument can easily be adapted to an arbitrary $z\in[0,1]$.

Note that the image $\phi_{I}(0)$ is always the left endpoint of the interval $\phi_{I}([0,1]).$ Also note that the set of intervals $\{\phi_{I}([0,1])\}_{I\in \{1,2\}^{n}}$ always cover $[0,1]$, and if two of these intervals have a non-empty intersection then they intersect at a mutual endpoint.

Consider the limsup set
$$W:=\Big\{x\in [0,1]: x\in B(\phi_{I}(0), 2^{-|I|}) \textrm{ for i.m. }I\in\{1,2\}^{*}\Big\}.$$ Note that in the definition of $W$ the radii of the defining balls only depends upon the length of the word $I$. There are $2^{n}$ elements of $\{1,2\}^n$, therefore $\sum_{n=1}^{\infty}\sum_{I\in\{1,2\}^{n}}2^{-n}=\infty.$ Consequently, if an analogue of Khintchine's theorem held for IFS's where we didn't need to take into account the inhomogeneity of $\Phi$, we would expect $W$ to have full Lebesgue measure. We now show that this is not the case.

Given $x\in[0,1],$ we say that $(i_{n})_{n=1}^{\infty}\in\{1,2\}^{\infty}$ is a coding for $x$ if
$$x=\bigcap_{n=1}^{\infty}X_{i_{1},\ldots,i_{n}}.$$
Every $x\in [0,1]$ has a coding. This coding is unique for every $x\in X$ apart from a countable set of $x$ with precisely two codings. We let $\pi:\{1,2\}^{\N}\to [0,1]$ be the function which maps $(i_{n})$ to $\cap_{n=1}^{\infty}X_{i_{1},\ldots,i_{n}}.$ Let $\mathcal{P}$ be the Bernoulli measure on $\{1,2\}^{\N}$ which gives the digit $1$ mass $3/4$ and the digit $2$ mass $1/4$. The push forward of the measure $\mathcal{P}$ under the map $\pi$ is precisely the Lebesgue measure restricted to $[0,1]$. Applying the strong law of large numbers, we may conclude that for Lebesgue almost every $x\in [0,1]$ its coding $(i_{n})$ satisfies $$\lim_{m\to \infty} \frac{\#\{1\leq n\leq m: i_{n}=1\}}{m}\to \frac{3}{4}\textrm{ and } \lim_{m\to \infty} \frac{\#\{1\leq n\leq m: i_{n}=2\}}{m}\to \frac{1}{4}.$$ By the above, for any $\epsilon>0,$ we may pick a large $N\in \N$ such that the set $$A:=\Big\{x\in[0,1]:\frac{\#\{1\leq n\leq m: i_{n}=1\}}{m}\geq  \frac{5}{8}\textrm{ and } \frac{\#\{ 1\leq n\leq m:i_{n}=2\}}{m}\leq \frac{3}{8}\textrm{ for all } m\geq N\Big\}$$ has Lebesgue measure at least $1-\epsilon$.

We now obtain a bound for the cardinality of the set $$\Sigma_m:=\Big\{(i_{n})_{n=1}^{m}\in\{1,2\}^{m}:\frac{\#\{ 1\leq n\leq m: i_{n}=1\}}{m}\geq  \frac{5}{8}\Big\}.$$ This bound will rely on a result from probability theory. Suppose we have a sequence of independent random variables $X_{1}, X_{2} \ldots, X_{m}$. Let $$\overline{X}=\frac{1}{m}\sum_{n=1}^{m}X_{n}\textrm{ and } \mu=\frac{1}{m}\sum_{n=1}^{m} E(X_{n}).$$ The following bound is known as Hoeffding's inequality \cite{Hoe}.
\begin{lemma}\label{Hoefdding's inequality}
Suppose $0\leq X_{n}\leq 1$ for all $1\leq n \leq m,$ then for $0<t<1-\mu$
$$\textrm{Prob}(\overline{X}-\mu\geq t)\leq e^{-2mt^{2}}.$$
\end{lemma}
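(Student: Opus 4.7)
The plan is to deduce the inequality via the standard exponential moment (Chernoff) method. First I would fix an arbitrary parameter $s>0$ and apply Markov's inequality to the non-negative random variable $e^{s(\overline{X}-\mu)}$, obtaining
\[
\textrm{Prob}(\overline{X}-\mu\geq t)\leq e^{-st}\,E\bigl[e^{s(\overline{X}-\mu)}\bigr].
\]
Writing $\overline{X}-\mu=\tfrac{1}{m}\sum_{n=1}^{m}(X_n-E(X_n))$ as a sum of independent centred variables, the moment generating function factorises,
\[
E\bigl[e^{s(\overline{X}-\mu)}\bigr]=\prod_{n=1}^{m}E\bigl[e^{(s/m)(X_n-E(X_n))}\bigr],
\]
so the problem reduces to bounding the moment generating function of a single bounded centred random variable.

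The key analytic input is the auxiliary statement commonly known as Hoeffding's lemma: if $Y$ is a centred random variable taking values in an interval $[a,b]$, then $E[e^{\lambda Y}]\leq e^{\lambda^{2}(b-a)^{2}/8}$ for every $\lambda\in\R$. I would prove this by expressing each $y\in[a,b]$ as a convex combination of the endpoints, applying convexity of the exponential to obtain the pointwise bound $e^{\lambda y}\leq \tfrac{b-y}{b-a}e^{\lambda a}+\tfrac{y-a}{b-a}e^{\lambda b}$, taking expectations (the centredness $E(Y)=0$ kills the linear term), and then estimating the logarithm of the resulting elementary function of $\lambda$ by a Taylor expansion whose second derivative is explicitly at most $(b-a)^{2}/4$. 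Applying the lemma with $Y=X_{n}-E(X_{n})\in[-E(X_{n}),1-E(X_{n})]$ (an interval of length at most $1$) and with parameter $s/m$, each factor is bounded above by $e^{s^{2}/(8m^{2})}$, and multiplying the $m$ factors together yields
\[
\textrm{Prob}(\overline{X}-\mu\geq t)\leq e^{-st+s^{2}/(8m)}.
\]

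The final step is to optimise the free parameter $s>0$. The exponent $-st+\tfrac{s^{2}}{8m}$ is minimised at $s=4mt$, where it equals $-2mt^{2}$; substituting back produces the claimed bound $e^{-2mt^{2}}$. The main obstacle in the whole argument is the proof of Hoeffding's lemma: the convexity step is routine, but establishing the sharp quadratic constant $1/8$ requires a careful analysis of the log-moment-generating function of the extremal two-point distribution, and this is exactly where the constant $2$ in the final exponent is generated. The hypothesis $0<t<1-\mu$ plays no role beyond ensuring the event in question is non-trivial, since $\overline{X}\leq 1$ forces $\overline{X}-\mu\leq 1-\mu$; the bound itself is formally valid for all $t>0$.
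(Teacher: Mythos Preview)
Your proof is correct and is the standard Chernoff--Hoeffding argument. However, the paper does not prove this lemma at all: it is simply quoted as a known result and attributed to Hoeffding \cite{Hoe}, since it is only used as a black box in Example~\ref{scaling example}. So there is nothing to compare --- you have supplied a full proof where the paper supplies a citation.
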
With Lemma \ref{Hoefdding's inequality} in mind we let $\mathcal{P}'$ be the unbiased probability measure that gives digit $1$ mass $1/2$ and digit $2$ mass $1/2$. Then
$$\#\Sigma_m=2^{m}\cdot \mathcal{P}'\Big((i_{n})_{n=1}^{m}:\frac{\#\{ 1\leq n\leq m: i_{n}=1\}}{m}\geq  \frac{5}{8}\Big).$$ Applying Hoeffding's inequality we obtain
\begin{align}
\label{Sigma bounds}
\#\Sigma_{m}=2^m\cdot \mathcal{P}'\Big((i_{n})_{n=1}^{m}:\frac{\#\{ 1\leq n\leq m: i_{n}=1\}}{m}-\frac{1}{2}\geq  \frac{1}{8}\Big)\leq \Big(\frac{2}{e^{2/64}}\Big)^m.
\end{align}Equation \eqref{Sigma bounds} is the desired upper bound on the cardinality of $\Sigma_m$.

Returning to the interval $[0,1],$ we remark that if $x$ is contained in $B(\phi_I(0),2^{-|I|})$ for some $I\in \D^{*},$ then $x$ must be contained in either  $B(a_{i_1,\ldots ,i_{|I|}},2^{-|I|})$ or $B(b_{i_1,\ldots, i_{|I|}},2^{-|I|}),$ where $(i_n)$ is the coding for $x$ and $\phi_{i_1,\ldots , i_{|I|}}([0,1])=[a_{i_1,\ldots, i_{|I|}},b_{i_1,\ldots, i_{|I|}}]$. This is a consequence of how the intervals $\{\phi_{I}([0,1])\}_{I\in\{1,2\}^{m}}$ sit alongside one another in $[0,1],$ and because $x$ is always contained in the interval $\phi_{i_1,\ldots, i_{n}}([0,1]).$

We now use the preceding observation to obtain estimates on $\L(A\cap W)$. Here and throughout $\L$ denotes the Lebesgue measure. For any $N\in \N$ we have
\begin{align*}
\L(A\cap W)& \leq \L\Big(\bigcup_{m=N}^{\infty}\bigcup_{I\in \Sigma_{m}}(B(a_{i_1,\ldots ,i_{m}},2^{-m})\cup B(b_{i_1,\ldots, i_{m}},2^{-m})\Big)\\
&\leq \sum_{m=N}^{\infty} \sum_{I\in \Sigma_m} \L(B(a_{i_1,\ldots, i_m},2^{-m})) + \L(B(b_{i_1,\ldots, i_m},2^{-m}))\\
&= \sum_{m=N}^{\infty}\#\Sigma_m\cdot 4\cdot 2^{-m} \\
&\leq \sum_{m=N}^{\infty}4\cdot\Big(\frac{1}{e^{2/64}}\Big)^m\\
&<\infty.
\end{align*}In the penultimate inequality we used \eqref{Sigma bounds}. Since we have convergence above, we can choose $N\in \N$ such that $\sum_{m=N}^{\infty}4\cdot e^{-2m/64}$ is arbitrarily small. Therefore $\L(A\cap W)=0.$ Since $\L(A)>1-\epsilon$ we have $\L(W)<\epsilon.$ Since $\epsilon$ is arbitrary we may conclude that $\L(W)=0$.
\end{example}

Our second example demonstrates that for a reasonable analogue of Khintchine's theorem to hold for IFS's, it is necessary that an approximation function gives weight to all words and is not concentrated on a subset of $\D^{*}.$

\begin{example}
\label{uniform example}
Let $\Phi=\{\phi_{1},\phi_{2}\}$ where $\phi_{1}(x)=\frac{x}{2}$ and $\phi_{2}(x)=\frac{x}{2}+\frac{1}{2}.$ As in Example \ref{scaling example} our attractor is the interval $[0,1]$ and $\Phi$ satisfies the open set condition. We fix $z=0$ and a word $J=(j_1,\ldots,j_m)\in\{1,2\}^{m}$. Let
\[
 \Psi(I) =
  \begin{cases}
   2^{-|I|} & \text{if } I \textrm{ doesn't begin with } J \\
   2^{-|I|}|I|^{-2}       & \text{if } I \textrm{ begins with } J.
  \end{cases}
\]Then $\Psi$ satisfies
\begin{equation}
\label{convergence and divergencea}
\sum_{\substack {I\in \D^{*}\\ I \textrm{ begins with } J}}\Psi(I)<\infty
\end{equation}
and
\begin{equation}
\label{convergence and divergence}
\sum_{n=1}^{\infty}\sum_{I\in \D^{n}}\Psi(I)=\infty.
\end{equation}If an analogue of Khintchine's theorem held for IFS's where $\Psi$ did not have to distribute weight evenly amongst the elements of $\D^{*},$ then \eqref{convergence and divergence} would suggest $W(\Psi,0)$ has full Lebesgue measure. However, using the Borel Cantelli lemma we can show that \eqref{convergence and divergencea} implies
$$\mathcal{L}\Big(\Big[\sum_{i=1}^{m}\frac{j_{i}}{2^{i}},\sum_{i=1}^{m}\frac{j_{i}}{2^{i}}+\frac{1}{2^{i}}\Big]\cap W(\Psi,0)\Big)=0.$$ Thus $W(\Psi,0)$ does not have full measure despite \eqref{convergence and divergence} being satisfied.
\end{example}
Bearing Example \ref{scaling example} and Example \ref{uniform example} in mind, we believe that the class of approximating functions we restrict to in the definition of an approximation regular pair is not so restrictive, and is in fact a natural class of approximating functions to study.

\section{Proof of Theorem \ref{conformal theorem}}
\label{Main theorem}
\subsection{Preliminaries}
We start this section by recalling the definition of Hausdorff measure and Hausdorff dimension. Let $E\subset \R^{d},$ $s\geq 0,$ and $\rho>0.$ We let $$\mathcal{H}^{s}_{\rho}(E):=\inf\Big\{\sum_{n=1}^{\infty}Diam(U_n)^s: \{U_n\}_{n=1}^{\infty}\textrm{ is a }\rho\textrm{-cover for }E\Big\}.$$ In the above the infimum is taken over all $\rho$-covers of $E$. The limit $\lim_{\rho\to 0}\mathcal{H}^{s}_{\rho}(E):=\mathcal{H}^{s}(E)$ exists and we call this limit the \emph{$s$-dimensional Hausdorff measure} of $E$. It is a straightforward exercise to show that for any $E\subset \R^{d}$ the following equality holds $$\inf\{s:\mathcal{H}^{s}(E)=0\}=\sup\{s:\mathcal{H}^{s}(E)=\infty\}.$$ We call this coinciding value the \emph{Hausdorff dimension} of $E$ and denote it by $\dim_{H}(E)$.

Suppose $E$ has Hausdorff dimension $s.$ We say that $E$ is \emph{Ahlfors regular} if
\begin{equation}
\label{Ahlfors regular}\mathcal{H}^s(E\cap B(x,r))\asymp r^s,
\end{equation} for all $x\in E$ and $0<r<Diam(E).$ Importantly, under the open set condition of statement $2$ from Theorem \ref{conformal theorem}, the attractor $X$ will always be Ahlfors regular.

In our proofs we will require the notion of a coding. This is the natural generalisation of what appeared in Example \ref{scaling example}. Given an IFS $\Phi$ and $x\in X,$ then there exists a sequence $(i_n)\in \D^{\N}$ such that $$x=\bigcap_{n=1}^{\infty}X_{i_{1},\ldots,i_{n}}.$$ We call such a sequence a \emph{coding of $x$}. The coding of $x$ is not necessarily unique. An $x$ may well have a continuum of codings. As a final observation, we remark that if $(i_n)$ is a coding for $x,$ then $(j_1,\ldots, j_m, i_1,i_2,\ldots)$ is a coding for $\phi_{J}(x)$ where $J=(j_1,\ldots, j_m).$

\subsection{Statement 1}
The proof of statement $1$ from Theorem \ref{conformal theorem} is standard but we include it for completeness.

\begin{proof}[Proof of statement $1$]
Let $\Phi$ be an IFS with attractor $X$ and let $z$ be an arbitrary element of $X$. Let $\Psi:\D^{*}\to \mathbb{R}_{\geq 0}$ be an approximating function satisfying
\begin{equation}
\label{Proof convergence}
\sum_{n=1}^{\infty} \sum_{I\in\D^{n}} \Psi(I)^{\dim_{H}(X)}<\infty.
\end{equation} Fix $\rho>0$. Let $M\in\mathbb{N}$ be sufficiently large that
$2\Psi(I)<\rho$ for all $I\in \D^n$ with $n\geq M$. Such an $M$ exists because of \eqref{Proof convergence}. Therefore $$\{B(\phi_{I}(z),\Psi(I))\}_{\substack{I\in \D^n\\ n\geq M}}$$ is a $\rho$ cover of $W(\Psi,z)$. For any $\epsilon>0,$ one can assume that $M$ is also sufficiently large that
\begin{equation*}
\sum_{n=M}^{\infty} \sum_{I\in\D^{n}} \Psi(I)^{\dim_{H}(X)}<\epsilon.
\end{equation*}
This is a consequence of \eqref{Proof convergence}. Therefore
\begin{align*}
\mathcal{H}^{\dim_{H}(X)}_{\rho}(W(\Psi,z))& \leq \sum_{n=M}^{\infty}\sum_{I\in \D^{n}} (2\Psi(I))^{\dim_{H}(X)}\\
&\leq 2^{\dim_{H}(X)}\epsilon.
\end{align*}Since $\epsilon$ is arbitrary we must have $\mathcal{H}^{\dim_{H}(X)}_{\rho}(W(\Psi,z))=0.$ Since $\rho$ was arbitrary we have $\mathcal{H}^{\dim_{H}(X)}(W(\Psi,z))=0.$

\end{proof}

\subsection{Statement 2 }
The proof of statement $2$ from Theorem \ref{conformal theorem} follows a similar framework to the proof of Theorem $2$ from \cite{LSV}. In particular we make use of the following two lemmas.

\begin{lemma}
\label{Density lemma}
Let $X$ be a compact set in $\R^{d}$ and let $\mu$ be a finite doubling measure on X such
that any open set is $\mu$ measurable. Let $E$ be a Borel subset of $X$. Assume that there are
constants $r_0,c > 0$ such that for any ball $B$ with radius less than $r_0$ and centre in $X$ we have
$$\mu(E \cap B) > c \mu(B).$$
Then $\mu(X \setminus E) = 0.$
\end{lemma}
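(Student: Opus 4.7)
The plan is to prove this by a standard Lebesgue density argument: if the hypothesis holds uniformly over small balls, then no density point of the complement $X\setminus E$ can exist, and the doubling property forces the complement to have zero measure. I would argue by contradiction, assuming $\mu(X\setminus E) > 0$.

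Since $E$ is Borel and $X$ is closed, $X\setminus E$ is a Borel subset of $X$. As $\mu$ is a finite doubling measure on $X\subset\R^{d}$, the Lebesgue differentiation theorem applies (using Besicovitch's covering theorem in $\R^{d}$, or the Vitali covering theorem available for any doubling measure on a separable metric space). Thus for $\mu$-almost every $x \in X\setminus E$, one has
\[
\lim_{r\to 0}\frac{\mu\bigl((X\setminus E)\cap B(x,r)\bigr)}{\mu(B(x,r))} = 1.
\]
Under our standing assumption $\mu(X\setminus E)>0$, such a density point $x_{0}\in X\setminus E$ exists. Choose $r<r_{0}$ small enough that
\[
\frac{\mu\bigl((X\setminus E)\cap B(x_{0},r)\bigr)}{\mu(B(x_{0},r))} > 1-\tfrac{c}{2}.
\]

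On the other hand, $x_{0}\in X$ and $r<r_{0}$, so the hypothesis gives $\mu(E\cap B(x_{0},r))>c\,\mu(B(x_{0},r))$. Since $E\subset X$ and $\mu$ is supported on $X$, we have $\mu(B(x_{0},r)) = \mu(E\cap B(x_{0},r)) + \mu((X\setminus E)\cap B(x_{0},r))$, so
\[
\mu\bigl((X\setminus E)\cap B(x_{0},r)\bigr) < (1-c)\,\mu(B(x_{0},r)).
\]
Combining the two displayed inequalities yields $1-\tfrac{c}{2} < 1-c$, contradicting $c>0$. Hence $\mu(X\setminus E)=0$.

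The only non-routine step is invoking the Lebesgue differentiation theorem in this setting, which is where the doubling hypothesis on $\mu$ is essential. If one prefers an entirely self-contained argument one can replace the density theorem by a direct covering argument: extract from $\{B(x,r):x\in X\setminus E,\ r<r_{0}\}$ a Besicovitch/Vitali subcover of $X\setminus E$ by balls of arbitrarily small radius, and use the hypothesis plus doubling to bound $\mu(X\setminus E)$ by $(1-c)$ times the $\mu$-mass of a controlled enlargement; iterating this contraction forces $\mu(X\setminus E)=0$. Either route is standard, so I do not expect any serious obstacle beyond carefully checking the measurability and covering hypotheses.
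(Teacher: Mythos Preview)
Your argument is correct: the Lebesgue differentiation theorem for doubling measures gives density points of $X\setminus E$, and such a point immediately contradicts the uniform lower bound on $\mu(E\cap B)/\mu(B)$. The paper does not actually supply a proof of this lemma; it simply cites \cite{BDV}, so there is no in-text argument to compare against. Your density-point route is the standard one and is essentially what one finds in that reference.
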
For a proof of Lemma \ref{Density lemma} see \cite[\S 8]{BDV}. Note that a measure $\mu$ supported on a compact set $X$ is \emph{doubling} if there exists a constant $C>1$ such that for any $x\in X$ and $r>0$ we have $$\mu(B(x,2r))\leq C \mu(B(x,r)).$$ Clearly if $X$ is Ahlfors regular then the restriction of $\mathcal{H}^{\dim_{H}(X)}$ to $X$ is a doubling measure.
\begin{lemma}
\label{Erdos lemma}
Let $X$ be a compact set in $\R^{d}$ and let $\mu$ be a finite measure on $X$. Also, let $E_n$ be
a sequence of $\mu$-measurable sets such that $\sum_{n=1}^{\infty}\mu(E_n)=\infty.$ Then
$$\mu(\limsup_{n\to\infty} E_{n})\geq \limsup_{Q\to\infty}\frac{(\sum_{n=1}^{Q}\mu(E_{n}))^{2}}{\sum_{n,m=1}^{Q}\mu(E_{n}\cap E_m)}.$$
\end{lemma}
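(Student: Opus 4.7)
The plan is to apply the standard second-moment / Chung--Erd\H{o}s method and then bridge the gap between $\bigcup_{n \geq 1} E_n$ and $\limsup_n E_n$ by a tail truncation. Set $S_Q := \sum_{n=1}^{Q} \mathbf{1}_{E_n}$. A direct Fubini-type calculation gives $\int_X S_Q \, d\mu = \sum_{n=1}^{Q} \mu(E_n)$ and $\int_X S_Q^2 \, d\mu = \sum_{n,m=1}^{Q} \mu(E_n \cap E_m)$. Writing $S_Q = S_Q \cdot \mathbf{1}_{\{S_Q > 0\}}$ and applying Cauchy--Schwarz yields
\[
\Big(\sum_{n=1}^{Q} \mu(E_n)\Big)^2 \leq \mu\Big(\bigcup_{n=1}^{Q} E_n\Big) \cdot \sum_{n,m=1}^{Q} \mu(E_n \cap E_m),
\]
since $\{S_Q > 0\} = \bigcup_{n \leq Q} E_n$. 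Taking $\limsup$ in $Q$ and using monotone convergence of $\mu\bigl(\bigcup_{n \leq Q} E_n\bigr)$ to $\mu\bigl(\bigcup_{n \geq 1} E_n\bigr)$ gives the stated inequality with $\bigcup_{n \geq 1} E_n$ in place of $\limsup_n E_n$.

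To upgrade to $\limsup_n E_n = \bigcap_N \bigcup_{n \geq N} E_n$, I would apply exactly the same Cauchy--Schwarz bound to the tail sequence starting at an arbitrary index $N$, obtaining
\[
\mu\Big(\bigcup_{n \geq N} E_n\Big) \geq \limsup_{Q \to \infty} \frac{\bigl(\sum_{n=N}^{Q} \mu(E_n)\bigr)^2}{\sum_{n,m=N}^{Q} \mu(E_n \cap E_m)}.
\]
The claim is then that this tail limsup is independent of $N$ and equals the quantity on the right-hand side of the lemma. Once this is established, continuity of the finite measure $\mu$ from above gives $\mu(\limsup_n E_n) = \lim_{N \to \infty} \mu\bigl(\bigcup_{n \geq N} E_n\bigr)$, and the conclusion follows by passing $N \to \infty$.

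The main obstacle is precisely the verification that the tail limsup coincides with the full limsup. The numerator is easy: it differs from the untruncated sum by the fixed amount $\sum_{n < N} \mu(E_n)$, which is negligible against $\sum_{n \leq Q} \mu(E_n) \to \infty$. The denominator is more delicate, because the discarded cross-terms with $\min(n,m) < N$ are bounded only by $O\bigl(N \sum_{n \leq Q} \mu(E_n)\bigr)$. To see this is still negligible, I would exploit the a priori estimate
\[
\sum_{n,m \leq Q} \mu(E_n \cap E_m) \geq \frac{\bigl(\sum_{n \leq Q} \mu(E_n)\bigr)^2}{\mu(X)},
\]
which is itself Cauchy--Schwarz applied with $\bigcup_{n \leq Q} E_n$ replaced by $X$. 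This forces the denominator to grow strictly faster than any constant multiple of the first-moment sum, so the perturbation introduced by truncation vanishes in the ratio. Everything else is routine bookkeeping.
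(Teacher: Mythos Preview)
The paper does not supply its own proof of this lemma; it simply cites Sprind\v{z}uk's monograph \cite{Spr}. Your argument is the standard Chung--Erd\H{o}s second-moment proof and is correct.

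One minor simplification is available. You claim that the tail $\limsup$ \emph{equals} the full $\limsup$, and you spend effort on the denominator (the $O\bigl(N\sum_{n\le Q}\mu(E_n)\bigr)$ bound and the a priori Cauchy--Schwarz lower bound on $\sum_{n,m}\mu(E_n\cap E_m)$) to justify this. In fact only the inequality
\[
\limsup_{Q\to\infty}\frac{\bigl(\sum_{n=N}^{Q}\mu(E_n)\bigr)^2}{\sum_{n,m=N}^{Q}\mu(E_n\cap E_m)}
\;\ge\;
\limsup_{Q\to\infty}\frac{\bigl(\sum_{n=1}^{Q}\mu(E_n)\bigr)^2}{\sum_{n,m=1}^{Q}\mu(E_n\cap E_m)}
\]
is needed, and this direction is immediate: the truncated denominator is termwise no larger than the full one, while the truncated numerator differs from the full one by the fixed constant $\sum_{n<N}\mu(E_n)$, which is negligible against a divergent sum. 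So your denominator analysis, while correct, can be dropped entirely.
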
For a proof of Lemma \ref{Erdos lemma} see \cite[Lemma 5]{Spr}.

In the proofs of each of our theorems we will need the following properties of self-conformal sets satisfying the open set condition. Let $\mu:=\mathcal{H}^{\dim_{H}(X)}|_{X}$ be the $\dim_{H}(X)$-dimensional Hausdorff measure restricted to $X,$ then:

\begin{itemize}
\item For any $n\in\mathbb{N}$ and $I,J\in D^{n}$ such that $I\neq J$ we have
\begin{equation}
\label{measure zero intersection}
\mu(X_{I}\cap X_{J})=0.
\end{equation}
\item For any $I,J\in \D^{*}$
\begin{equation}
\label{Weak Bernoulli measure}
\mu(X_{IJ})\asymp \mu(X_{I})\mu(X_{J}).
\end{equation}
  \item  For any $I,J\in \D^{*}$
\begin{equation}
\label{Weak Bernoulli diameter}
Diam(X_{IJ})\asymp Diam(X_{I})Diam(X_{J}).
\end{equation}
  \item For any $I\in \D^{*}$
\begin{equation}
\label{Measure and diameter}
\mu(X_{I})\asymp Diam(X_{I})^{\dim_{H}(X)}.
\end{equation}
\item There exists $\gamma\in(0,1)$ such that for any $I\in \D^{*}$
\begin{equation}
\label{measure decay}
\mu(X_{I})= \mathcal{O}(\gamma^{|I|}).
\end{equation}
\item Let $x\in X$ and $(i_n)\in \D^{\N}$ be a coding of $x$. For any $0<r<Diam(X)$ there exists $N\in\mathbb{N}$ such that
\begin{equation}
\label{cylinder approx}
X_{i_{1},\ldots,i_{N}}\subset B(x,r) \textrm{ and } Diam(X_{i_{1},\ldots,i_{N}})\asymp r.
\end{equation}

\end{itemize}
In the above we have denoted the concatenation of two words $I$ and $J$ by $IJ$. Property \eqref{measure zero intersection} follows from \cite[Theorem 3.7]{Kae}. For a proof of the remaining properties and for a proof that $X$ is Ahlfors regular see \cite{Fal} and \cite{Rue}. Properties \eqref{Weak Bernoulli measure}, \eqref{Measure and diameter}, and \eqref{measure decay} are essentially a consequence of the fact that $\mu$ is equivalent to a suitably defined Gibbs measure for a particular H\"{o}lder continuous potential. Property \eqref{Weak Bernoulli diameter} is a consequence of the differentials being H\"{o}lder continuous. The proof of \eqref{cylinder approx} is standard.

\subsubsection{Proof of statement 2}
We are now in a position to prove statement $2$ from Theorem \ref{conformal theorem}. We start by fixing $z\in X$ and let $(z_i)\in \D^{\N}$ be a coding of $z$. Let $\theta:\N\to \mathbb{R}$ be a decreasing function satisfying \eqref{Divergence condition}. We now fix a ball $B(y,r)$ where $y\in X$ and $0<r<Diam(X)$. We will show that
\begin{equation}
\label{limsup density}
\mu(W(\theta,z) \cap B(y,r)) > c \mu(B(y,r)),
\end{equation}for some constant $c>0$ that does not depend on our choice of ball. Applying Lemma \ref{Density lemma} will then allow us to conclude our result.

The set $W(\theta,z)$ is defined to be a limsup set of a sequence of balls. It will be computationally easier to replace these balls with cylinder sets. For each $I\in \D^{*}$ we let $X_{I,\theta}$ be a cylinder set that satisfies the following two properties:
\begin{align}
X_{I,\theta}&\subseteq B(\phi_{I}(z),Diam(X_{I})\theta(|I|))\label{align1}\\
\mu(X_{I,\theta})&\asymp (Diam(X_{I})\theta(|I|))^{\dim_{H}(X)}\label{align2}.
\end{align}Such a cylinder set exists by properties \eqref{Measure and diameter} and \eqref{cylinder approx}. Without loss of generality we may assume that $$X_{I,\theta}=X_{I(z_1,\ldots, z_{n(I,\theta)})}$$ for some $n(I,\theta)\in \N.$ We emphasise here that for any $I\in \D^n$ we have that $I$ is a prefix of $I(z_1,\ldots, z_{n(I,\theta)}),$  $|I(z_1,\ldots, z_{n(I,\theta)})|=n+n(I,\theta),$ and $X_{I,\theta}\subset X_{I}$.  Importantly, by \eqref{measure zero intersection} we have $\mu(X_{I,\theta}\cap X_{J,\theta})=0$ for $I,J\in\D^{n}$ such that $I\neq J.$ We now replace $W(\theta,z)$ with the following limsup set that is defined using cylinder sets instead of balls. Let $$E(\theta,z):=\Big\{x\in X: x\in X_{I,\theta} \textrm{ for }i.m.\, I\in \D^{*}\Big\}$$ By \eqref{align1} we have $E(\theta,z)\subseteq W(\theta,z)$. So to prove \eqref{limsup density} it suffices to prove
\begin{equation}
\label{limsup density2}
\mu(E(\theta,z) \cap B(y,r)) > c \mu(B(y,r)).
\end{equation}

To our ball $B(y,r)$ we associate the cylinder $X_{(y_{1},\ldots,y_{n(r)})},$ where $(y_i)\in \D^{\N}$ is a coding for $y$ and $X_{(y_{1},\ldots,y_{n(r)})}$ satisfies \eqref{cylinder approx}. To each $n\geq n(r)$ we associate the sets
$$\mathcal{E}_{n}:=\Big\{X_{I,\theta}: I\in\D^{n} \textrm{ and } (i_{1},\ldots,i_{n(r)})=(y_{1},\ldots,y_{n(r)})\Big\} $$ and $$E_{n}:=\bigcup_{X_{I,\theta}\in \mathcal{E}_{n}} X_{I,\theta}.$$ Then $$\limsup E_n\subset E(\theta,z)\cap X_{(y_{1},\ldots,y_{n(r)})}\subset E(\theta,z) \cap B(y,r).$$ Therefore to prove \eqref{limsup density2} it suffices to show that
\begin{equation}
\label{limsup density4}
\mu(\limsup E_n) > c \mu(B(y,r)).
\end{equation}It is a consequence of \eqref{Measure and diameter}, \eqref{cylinder approx}, and the fact that $\mu$ is Ahlfors regular that $$\mu(X_{(y_{1},\ldots,y_{n(r)})})\asymp \mu(B(y,r)).$$ Therefore to prove \eqref{limsup density4} it suffices to prove
\begin{equation}
\label{limsup density3}
\mu(\limsup E_n) > c \mu(X_{(y_{1},\ldots,y_{n(r)})}).
\end{equation}We now focus our attention on proving \eqref{limsup density3}. To prove \eqref{limsup density3} we will apply Lemma \ref{Erdos lemma} to the sequence of sets $E_n$. To successfully apply this lemma there are two steps, we must first demonstrate that $\sum \mu(E_n)=\infty$, we then obtain estimates for the measure of $E_{n}\cap E_{m}$ for $n\neq m$. We split these steps into the following two propositions.
\begin{prop}
\label{Divergence prop}
$\sum_{n=n(r)}^{\infty} \mu(E_n)=\infty.$
\end{prop}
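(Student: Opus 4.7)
The plan is to unwind the divergence condition \eqref{Divergence condition} until it becomes a statement about $\sum\theta(n)^{\dim_H(X)}$, and then to show that $\mu(E_n)$ is comparable (up to a constant depending only on the fixed ball $B(y,r)$) to $\theta(n)^{\dim_H(X)}$. Summing then gives divergence.

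First I would exploit the strong separation condition: the cylinders $\{X_{I,\theta}\}_{I\in\D^{n}}$ are pairwise disjoint, hence
\[
\mu(E_n)=\sum_{I\in\mathcal{E}_n}\mu(X_{I,\theta})\asymp\sum_{\substack{I\in\D^{n}\\(i_1,\ldots,i_{n(r)})=(y_1,\ldots,y_{n(r)})}}(\mathrm{Diam}(X_I)\theta(n))^{\dim_H(X)},
\]
using the defining property \eqref{align2} of the $X_{I,\theta}$. Next, every such $I$ can be written as a concatenation $(y_1,\ldots,y_{n(r)})J$ with $J\in\D^{n-n(r)}$, so \eqref{Weak Bernoulli diameter} gives $\mathrm{Diam}(X_I)\asymp\mathrm{Diam}(X_{(y_1,\ldots,y_{n(r)})})\,\mathrm{Diam}(X_J)$. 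Factoring out the $(y_1,\ldots,y_{n(r)})$-piece, I obtain
\[
\mu(E_n)\asymp \mathrm{Diam}(X_{(y_1,\ldots,y_{n(r)})})^{\dim_H(X)}\,\theta(n)^{\dim_H(X)}\sum_{J\in\D^{n-n(r)}}\mathrm{Diam}(X_J)^{\dim_H(X)}.
\]

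The next step is to show that the inner sum is $\asymp 1$. By \eqref{Measure and diameter}, $\sum_{J\in\D^{m}}\mathrm{Diam}(X_J)^{\dim_H(X)}\asymp\sum_{J\in\D^{m}}\mu(X_J)$, and by strong separation (together with \eqref{measure zero intersection}) the sets $\{X_J\}_{J\in\D^{m}}$ partition $X$ up to a $\mu$-null set, so the sum equals $\mu(X)$, which is a positive finite constant. Combined with \eqref{Measure and diameter} applied to $(y_1,\ldots,y_{n(r)})$, this yields
\[
\mu(E_n)\asymp \mu(X_{(y_1,\ldots,y_{n(r)})})\,\theta(n)^{\dim_H(X)}.
\]

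Finally, I would translate the hypothesis \eqref{Divergence condition} into a statement about $\theta$. The same computation as above (now without the restriction on the initial block) gives $\sum_{I\in\D^{n}}(\mathrm{Diam}(X_I)\theta(n))^{\dim_H(X)}\asymp\theta(n)^{\dim_H(X)}$, so \eqref{Divergence condition} is equivalent to $\sum_{n=1}^{\infty}\theta(n)^{\dim_H(X)}=\infty$. Since $\mu(X_{(y_1,\ldots,y_{n(r)})})$ is a strictly positive constant depending only on $B(y,r)$, summing the previous display from $n(r)$ to infinity yields $\sum_{n=n(r)}^{\infty}\mu(E_n)=\infty$. I don't expect any serious obstacle here; the only thing to double-check is that the implicit constants in the various $\asymp$ relations are uniform in $n$ (they are, because \eqref{Weak Bernoulli diameter}, \eqref{Measure and diameter} and the Ahlfors regularity all hold with constants independent of the word considered), so that the chain of comparabilities can be concatenated without loss.
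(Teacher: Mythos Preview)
Your proof is correct and uses the same ingredients as the paper. The only difference is organisational: the paper's proof of this proposition argues softly, showing via \eqref{Weak Bernoulli measure} that the restricted sums $\sum_{n}\sum_{(i_1,\ldots,i_{n(r)})=J}\mu(X_{I,\theta})$ diverge simultaneously for all $J\in\D^{n(r)}$ and hence in particular for $J=(y_1,\ldots,y_{n(r)})$, whereas you compute the sharper relation $\mu(E_n)\asymp\mu(X_{(y_1,\ldots,y_{n(r)})})\,\theta(n)^{\dim_H(X)}$ directly. The paper in fact derives exactly your relation a little later as \eqref{final bound 1} inside the proof of Proposition~\ref{independence prop}, so your route merely anticipates that step.
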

\begin{proof}
Recalling \eqref{Divergence condition}, our $\theta$ must satisfy
\begin{equation}
\label{kk}
\sum_{n=n(r)}^{\infty} \sum_{I\in\D^{n}} (Diam(X_I)\theta(n))^{\dim_{H}(X)}=\infty.
 \end{equation}Each $X_{I,\theta}$ satisfies $\mu(X_{I,\theta})\asymp (Diam(X_I)\theta(|I|))^{\dim_{H}(X)}$ by \eqref{align2}. Therefore
\begin{equation}
\label{hh}
\sum_{n=n(r)}^{M} \sum_{I\in\D^{n}} (Diam(X_I)\theta(n))^{\dim_{H}(X)}\asymp \sum_{n=n(r)}^{M} \sum_{I\in\D^{n}}\mu( X_{I,\theta})
 \end{equation}for any $M\geq n(r)$. It is a consequence of \eqref{Weak Bernoulli measure} that for any $J\in \D^{n(r)}$ we have
\begin{equation}
\label{simultaneous divergence}
\sum_{n=n(r)}^{M} \sum_{\substack{I\in\D^{n}\\ (i_1,\ldots,i_{n(r)})=(y_1,\ldots,y_{n(r)})}}\mu( X_{I,\theta})\asymp \sum_{n=n(r)}^{M} \sum_{\substack{I\in\D^{n}\\ (i_1,\ldots,i_{n(r)})=J}}\mu( X_{I,\theta}).
\end{equation} In \eqref{simultaneous divergence} the implied constants may depend on $n(r)$. By \eqref{simultaneous divergence} we see that
\begin{equation}
\label{simultaneous divergence2}
\sum_{n=n(r)}^{\infty} \sum_{\substack{I\in\D^{n}\\ (i_1,\ldots,i_{n(r)})=(y_1,\ldots,y_{n(r))}}}\mu( X_{I,\theta})=\infty \textrm{ if and only if }\sum_{n=n(r)}^{\infty} \sum_{\substack{I\in\D^{n}\\ (i_1,\ldots,i_{n(r)})=J}}\mu( X_{I,\theta})=\infty.
 \end{equation} By \eqref{kk} and \eqref{hh} we must have
 $$\sum_{n=n(r)}^{\infty} \sum_{\substack{I\in\D^{n}\\ (i_1,\ldots,i_{n(r)})=J}}\mu( X_{I,\theta})=\infty$$ for at least one $J\in\D^{n(r)}$. Therefore by \eqref{simultaneous divergence2} we may conclude that $$\sum_{n=n(r)}^{\infty} \sum_{\substack{I\in\D^{n}\\ (i_1,\ldots,i_{n(r)})=(y_1,\ldots,y_{n(r)})}}\mu( X_{I,\theta})=\infty.$$ Since distinct elements of $\mathcal{E}_{n}$ intersect in a set of measure zero we have $$\sum_{n=n(r)}^{\infty} \sum_{\substack{I\in\D^{n}\\ (i_1,\ldots,i_{n(r)})=(y_1,\ldots,y_{n(r)})}}\mu( X_{I,\theta})=\sum_{n=n(r)}^{\infty} \mu(E_n).$$ Therefore $\sum_{n=n(r)}^{\infty} \mu(E_n)=\infty$ as required.
\end{proof}

\begin{prop}Let $\theta:\mathbb{N}\to \R_{\geq 0}$ be decreasing. Then
\label{independence prop}
$$\sum_{n,m=n(r)}^{Q}\mu(E_{n}\cap E_{m})=\O\Big(\mu(X_{(y_1,\ldots,y_{n(r)})})\Big(\sum_{n=n(r)}^{Q}\theta(n)^{\dim_{H}(X)}+\Big(\sum_{n=n(r)}^{Q}\theta(n)^{\dim_{H}(X)}\Big)^2\Big)\Big).$$
\end{prop}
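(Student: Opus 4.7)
The plan is to use the cylinder structure of the sets $X_{I,\theta}$ to characterise when $X_{I,\theta}\cap X_{J,\theta}$ has positive measure, and then to split the double sum according to the relative ranks of the two cylinders. As a preliminary step I would normalise the construction so that the integer $n(I,\theta)$ in $X_{I,\theta}=X_{I(z_1,\ldots,z_{n(I,\theta)})}$ depends only on $n=|I|$. Concretely, define $N_n$ to be the smallest integer with $Diam(X_{(z_1,\ldots,z_{N_n})})\asymp \theta(n)$, with constants tuned so that both \eqref{align1} and \eqref{align2} hold uniformly in $I$. This works because $\phi_I(z)\in X_{I(z_1,\ldots,z_k)}$ for every $k$, so \eqref{align1} is automatic once diameters match, and because \eqref{Weak Bernoulli diameter} gives $Diam(X_{I(z_1,\ldots,z_{N_n})})\asymp Diam(X_I)\theta(n)$. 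Since $\theta$ is decreasing, $N_n$ is non-decreasing in $n$, so $n+N_n\leq m+N_m$ whenever $n\leq m$.

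The key structural input, coming from \eqref{measure zero intersection}, is that any two cylinders are either $\mu$-disjoint or nested. Applied to $X_{I,\theta}$ and $X_{J,\theta}$ with $|I|=n\leq m=|J|$, together with the rank comparison, this forces $X_{J,\theta}\subseteq X_{I,\theta}$ whenever the intersection is non-trivial. In particular, $J$ must extend $I$, say $J=IK$ with $K\in\D^{m-n}$, and comparing addresses, $X_{IK,\theta}\subseteq X_{I,\theta}$ is equivalent to $K(z_1,\ldots,z_{N_m})$ beginning with $(z_1,\ldots,z_{N_n})$. Since distinct $X_{I,\theta}$'s with $I\in\mathcal{E}_n$ are disjoint, I would write
\begin{equation*}
\mu(E_n\cap E_m)=\sum_{I\in\mathcal{E}_n}\sum_{\substack{K\in\D^{m-n}\\ X_{IK,\theta}\subseteq X_{I,\theta}}}\mu(X_{IK,\theta}),
\end{equation*}
and split according to $m-n\geq N_n$ versus $1\leq m-n<N_n$. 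In the long-range case the condition on $K$ reduces to $K$ beginning with $(z_1,\ldots,z_{N_n})$, leaving $\#\D^{m-n-N_n}$ free choices for the tail. Combining \eqref{Weak Bernoulli diameter}, \eqref{align2}, the identity $\sum_{K'\in\D^k}Diam(X_{K'})^{\dim_{H}(X)}\asymp 1$, and \eqref{Weak Bernoulli measure}, one obtains $\mu(E_n\cap E_m)=\mathcal{O}(\mu(X_{(y_1,\ldots,y_{n(r)})})\theta(n)^{\dim_{H}(X)}\theta(m)^{\dim_{H}(X)})$, which when summed over pairs yields the $(\sum_n\theta(n)^{\dim_{H}(X)})^{2}$ term. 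The diagonal $n=m$, which is just $\mu(E_n)$, already contributes the $\sum_n\theta(n)^{\dim_{H}(X)}$ term via Proposition \ref{Divergence prop}.

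The main obstacle is the short-range case $1\leq m-n<N_n$, where the address-alignment condition forces $K=(z_1,\ldots,z_{m-n})$ together with an additional periodicity condition on $(z_i)$, giving at most one valid $J=IK$ per $I$. Using \eqref{align2}, \eqref{Weak Bernoulli measure} and \eqref{measure decay} I would obtain
\begin{equation*}
\mu(E_n\cap E_m)=\mathcal{O}(\mu(X_{(y_1,\ldots,y_{n(r)})})\gamma^{m-n}\theta(m)^{\dim_{H}(X)}),
\end{equation*}
and then sum in $m$ over $1\leq m-n<N_n$. This last step is where the monotonicity of $\theta$ is essential: it gives $\theta(m)\leq \theta(n)$ for $m\geq n$, so one can pull $\theta(n)^{\dim_{H}(X)}$ out of the sum and be left with a bounded geometric series in $\gamma^{m-n}$. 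Summing over $n$ produces the remaining $\sum_n\theta(n)^{\dim_{H}(X)}$ contribution, and the three contributions combine to the stated bound. Without monotonicity, this step would instead pick up a factor of $N_n\asymp \log(1/\theta(n))$, which is precisely the origin of the extra hypothesis \eqref{DufSch Divergence} appearing in Theorem \ref{Weak Duffin Schaeffer}.
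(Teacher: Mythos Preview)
Your approach is correct and essentially the same as the paper's. The paper also fixes $I\in\D^{n}$, splits according to whether $m$ exceeds $n+n(I,\theta)$, bounds the short-range contribution by $\mu(X_{J,\theta})$ for the unique admissible $J=I(z_1,\ldots,z_{m-n})$ to obtain the $\gamma^{m-n}$ decay (using monotonicity of $\theta$ at exactly the same point you do), and handles the long-range by the quasi-product estimates \eqref{Weak Bernoulli measure}--\eqref{Measure and diameter}. Your normalisation $n(I,\theta)=N_{|I|}$ and the resulting observation that $X_{J,\theta}\subseteq X_{I,\theta}$ whenever $|J|\geq|I|$ is a tidy way to organise the argument, but it is not a different route: the paper simply uses $\mu(X_{I,\theta}\cap X_{J,\theta})\leq\mu(X_{J,\theta})$ without committing to a direction of inclusion, and your periodicity remark on $(z_i)$, while correct, is only needed later for Proposition~\ref{leading prop}.
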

\begin{proof}
Let $I\in \D^{n}$ and $X_{I,\theta}\in \mathcal{E}_{n}.$ When calculating $\mu(X_{I,\theta}\cap E_{m})$ there are two cases that naturally arise, when $m> n+n(I,\theta)$ and when $n<m\leq n+n(I,\theta).$ Let us start with the case where $n<m\leq n+n(I,\theta).$ Since the rank of the cylinder $X_{I,\theta}$ is at least $m,$ there exists at most one $X_{J,\theta}\in \mathcal{E}_{m}$ such that $X_{I,\theta}\cap X_{J,\theta}\neq \emptyset.$ Moreover this $J$ must be of the form $J=(i_1,\ldots, i_n,z_1,\ldots,z_{m-n})$. These observations give rise to the following useful bound on $\mu(X_{I,\theta}\cap E_{m})$ for $n<m\leq n+n(I,\theta)$:
\begin{align*}
\mu(X_{I,\theta}\cap E_{m})&= \mu(X_{I,\theta}\cap X_{J,\theta})\\
&\leq \mu(X_{J,\theta})\\
&\asymp (Diam(X_{J})\theta(m))^{\dim_{H}(X)}\, &&(\textrm{By }\eqref{align2})\\
&\asymp (Diam(X_{I})Diam(X_{(z_{1},\ldots,z_{m-n})})\theta(m))^{\dim_{H}(X)}\, &&(\textrm{By }\eqref{Weak Bernoulli diameter})\\
&\asymp \mu(X_{I})\mu(X_{(z_{1},\ldots,z_{m-n})})\theta(m)^{\dim_{H}(X)}\, &&(\textrm{By }\eqref{Measure and diameter})\\
&\leq \mu(X_{I})\mu(X_{(z_{1},\ldots,z_{m-n})})\theta(n)^{\dim_{H}(X)}\, && (\textrm{Because }\theta \textrm{ is decreasing})\\
&= \mathcal{O}( \mu(X_{I})\theta(n)^{\dim_{H}(X)}\gamma^{m-n})\, && (\textrm{By }\eqref{measure decay}).
%&\asymp (Diam(X_{I})\theta(n))^{\dim_{H}(X)}\gamma^{m-n} &&(\textrm{By }\eqref{Measure and diameter})\\
%&\asymp \mu(X_{I,\theta})\gamma^{m-n} &&(\textrm{By }\eqref{align2}).
\end{align*}We have shown that if $n<m\leq n+n(I,\theta)$ then
\begin{equation}
\label{intersection bound 1}
\mu(X_{I,\theta}\cap E_{m})= \mathcal{O}( \mu(X_{I})\theta(n)^{\dim_{H}(X)}\gamma^{m-n}).
\end{equation}Now let us consider the case where $m> n+n(I,\theta).$ In this case
$$X_{I,\theta}\cap E_{m}=\Big\{X_{J,\theta}:J\in\D^{m}\textrm{ and } (j_{1},\ldots,j_{n+n(I,\theta)})=(i_1,\ldots,i_n,z_1,\ldots,z_{n(I,\theta)})\Big\}.$$Thus we obtain the following bounds
\begin{align*}
\mu(X_{I,\theta}\cap E_{m})&=\sum_{\substack{J\in \D^{m}\\ (j_{1},\ldots,j_{n+n(I,\theta)})=I(z_1,\ldots,z_{n(I,\theta)})}}\mu(X_{J,\theta})\\
&=\sum_{K\in\D^{m-n-n(\theta,I)}}\mu(X_{I(z_{1},\ldots,z_{n(I,\theta)})K,\theta})\\
&\asymp \sum_{K\in\D^{m-n-n(\theta,I)}} (Diam(X_{I(z_{1},\ldots,z_{n(I,\theta)})K})\theta(m))^{\dim_{H}(X)} &&(\textrm{By }\eqref{align2})\\
&\asymp (Diam(X_{I,\theta}))\theta(m))^{\dim_{H}(X)}\sum_{K\in\D^{m-n-n(\theta,I)}} Diam(X_{K})^{\dim_{H}(X)} && (\textrm{By }\eqref{Weak Bernoulli diameter})\\
&\asymp \mu(X_{I,\theta})\theta(m)^{\dim_{H}(X)}\sum_{K\in\D^{m-n-n(\theta,I)}} \mu(X_{K}) && (\textrm{By }\eqref{Measure and diameter})\\
&\asymp (Diam(X_{I})\theta(n)\theta(m))^{\dim_{H}(X)} && (\textrm{By }\eqref{align2})\\
%&\asymp \sum_{K\in \D^{m-n-n(\theta,I)}} Diam(X_{I(z_{1},\ldots,z_{n(I,\theta)})K)\theta(m)\\
&\asymp \mu(X_{I})\theta(n)^{\dim_{H}(X)}\theta(m)^{\dim_{H}(X)} &&(\textrm{By }\eqref{Measure and diameter}).
\end{align*}We have shown that if $m> n+n(I,\theta)$ then
\begin{equation}
\label{intersection bound 2}
\mu(X_{I,\theta}\cap E_{m})\asymp \mu(X_{I})\theta(n)^{\dim_{H}(X)}\theta(m)^{\dim_{H}(X)}.
\end{equation}Combining \eqref{intersection bound 1} and \eqref{intersection bound 2} we obtain the bound
\begin{equation}
\label{intersection bound}
\mu(X_{I,\theta}\cap E_{m})\leq \mathcal{O}\Big(\mu(X_{I})\theta(n)^{\dim_{H}(X)}\gamma^{m-n}+\mu(X_{I})\theta(n)^{\dim_{H}(X)}\theta(m)^{\dim_{H}(X)}\Big).
\end{equation} Importantly this bound holds for all $m> n$.

%This inequality implies
%\begin{equation}
%\label{intersection measure}
%\mu(X_{I,\Phi}\cap E_{m})\leq \mu(X_{I})\Phi(n)\gamma^{m-n}+\mu(X_{I})\Phi(n)^{s}\Phi(m)^{s}
%end{equation} for all $m\geq n$.

This implies the following
\begin{align}
\sum_{n,m=n(r)}^{Q}\mu(E_{n}\cap E_{m})&=2\sum_{n=n(r)}^{Q}\mu(E_{n})+2\sum_{n=n(r)}^{Q-1}\sum_{m=n+1}^{Q}\mu(E_{n}\cap E_{m})\nonumber\\
&=2\sum_{n=n(r)}^{Q}\mu(E_{n})+2\sum_{n=n(r)}^{Q-1}\sum_{\substack{I\in\D^{n} \nonumber\\ (i_{1},\ldots,i_{n(r)})=(y_{1},\ldots,y_{n(r)})}}\sum_{m=n+1}^{Q}\mu(X_{I,\theta}\cap E_{m})\nonumber\\
&= 2\sum_{n=n(r)}^{Q}\mu(E_{n})+\mathcal{O}\Big(\sum_{n=n(r)}^{Q-1}\sum_{\substack{I\in\D^{n} \nonumber\\ (i_{1},\ldots,i_{n(r)})=(y_{1},\ldots,y_{n(r)})}}\sum_{m=n+1}^{Q}\mu(X_{I})\theta(n)^{\dim_{H}(X)}\gamma^{m-n}\nonumber\Big)\\
&+ \O\Big(\sum_{n=n(r)}^{Q-1}\sum_{\substack{I\in\D^{n}\\ (i_{1},\ldots,i_{n(r)})=(y_{1},\ldots,y_{n(r)})}}\sum_{m=n+1}^{Q} \mu(X_{I})\theta(n)^{\dim_{H}(X)}\theta(m)^{\dim_{H}(X)}\Big). \label{three parts}
%&\leq\sum_{n=n(r)}^{Q}\mu(E_{n})+2\mu(X_{(y_{1},\ldots,y_{n(r)})})\sum_{n=n(r)}^{Q-1}\sum_{m=n+1}^{Q}\sum_{J\in\{0,\ldots,m\}^{m-n}}\mu(X_{J})\Phi(n)\gamma^{m-n}+\mu(X_{J})\Phi(n)^{s}\Phi(m)^{s}\\
%&\leq\sum_{n=n(r)}^{Q}\mu(E_{n})+2c\mu(X_{(y_{1},\ldots,y_{n(r)})})\sum_{n=n(r)}^{Q-1}\Phi(n)+2\mu(X_{(y_{1},\ldots,y_{n(r)})})\sum_{n=n(r)}^{Q-1}\sum_{m=n+1}^{Q}\Phi(n)^{s}\Phi(m)^{s}.
\end{align}
We now consider each of the three terms appearing in \eqref{three parts} individually. For the first term we have
\begin{align*}
\sum_{n=n(r)}^{Q}\mu(E_{n})&=\sum_{n=n(r)}^{Q}\sum_{I\in \D^{n-n(r)}}\mu(X_{(y_1,\ldots,y_{n(r)})I,\theta}) \\
&\asymp \sum_{n=n(r)}^{Q}\sum_{I\in \D^{n-n(r)}} (Diam(X_{(y_1,\ldots,y_{n(r)})I})\theta(n))^{\dim_{H}(X)} && (\textrm{By }\eqref{align2})\\
&\asymp \sum_{n=n(r)}^{Q}\sum_{I\in \D^{n-n(r)}}(Diam(X_{(y_1,\ldots,y_{n(r)})})Diam(X_{I})\theta(n))^{\dim_{H}(X)} &&(\textrm{By }\eqref{Weak Bernoulli diameter})\\
&\asymp \mu(X_{(y_1,\ldots,y_{n(r)})})\sum_{n=n(r)}^{Q}\theta(n)^{\dim_{H}(X)} \sum_{I\in \D^{n-n(r)}}\mu(X_{I}) &&(\textrm{By }\eqref{Measure and diameter})\\
&\asymp \mu(X_{(y_1,\ldots,y_{n(r)})})\sum_{n=n(r)}^{Q}\theta(n)^{\dim_{H}(X)}.
\end{align*}Thus we have shown that
\begin{equation}
\label{final bound 1}
\sum_{n=n(r)}^{Q}\mu(E_{n})\asymp \mu(X_{(y_1,\ldots,y_{n(r)})})\sum_{n=n(r)}^{Q}\theta(n)^{\dim_{H}(X)}.
\end{equation}
We now focus on the second term in \eqref{three parts}:
\begin{align}
\label{bound2}
&\sum_{n=n(r)}^{Q-1}\sum_{\substack{I\in\D^{n} \\ (i_{1},\ldots,i_{n(r)})=(y_{1},\ldots,y_{n(r)})}}\sum_{m=n+1}^{Q}\mu(X_{I})\theta(n)^{\dim_{H}(X)}\gamma^{m-n}\nonumber\\
&\asymp\mu(X_{(y_1,\ldots,y_{n(r)})})\sum_{n=n(r)}^{Q-1}\sum_{J\in\D^{n-n(r)}}\sum_{m=n+1}^{Q}\mu(X_{J})\theta(n)^{\dim_{H}(X)}\gamma^{m-n}&& (\textrm{By }\eqref{Weak Bernoulli measure})\nonumber \\
&=\mu(X_{(y_1,\ldots,y_{n(r)})})\sum_{n=n(r)}^{Q-1}\theta(n)^{\dim_{H}(X)}\sum_{J\in\D^{n-n(r)}}\mu(X_{J})\sum_{m=n+1}^{Q}\gamma^{m-n}\nonumber\\
&= \O\Big( \mu(X_{(y_1,\ldots,y_{n(r)})})\sum_{n=n(r)}^{Q-1}\theta(n)^{\dim_{H}(X)}\Big).
\end{align}In the last line above we used the fact that $\gamma\in(0,1)$ to conclude that $\sum_{m=n+1}^{Q}\gamma^{m-n}$ can be bounded above by a constant independent of $n$ and $Q$. We now turn our attention to the third term in \eqref{three parts}:
\begin{align}
\label{bound3}
&\sum_{n=n(r)}^{Q-1}\sum_{\substack{I\in\D^{n} \nonumber\\ (i_{1},\ldots,i_{n(r)})=(y_{1},\ldots,y_{n(r)})}}\sum_{m=n+1}^{Q}\mu(X_{I})\theta(n)^{\dim_{H}(X)}\theta(m)^{\dim_{H}(X)}\nonumber\\
&\asymp \mu(X_{(y_1,\ldots,y_{n(r)})})\sum_{n=n(r)}^{Q-1}\sum_{J\in\D^{n-n(r)}}\sum_{m=n+1}^{Q}\mu(X_{J})\theta(n)^{\dim_{H}(X)}\theta(m)^{\dim_{H}(X)}&& (\textrm{By }\eqref{Weak Bernoulli measure})\nonumber \\
& =\mu(X_{(y_1,\ldots,y_{n(r)})})\sum_{n=n(r)}^{Q-1}\theta(n)^{\dim_{H}(X)}\sum_{J\in\D^{n-n(r)}}\mu(X_{J})\sum_{m=n+1}^{Q}\theta(m)^{\dim_{H}(X)}\nonumber\\
& =\mu(X_{(y_1,\ldots,y_{n(r)})})\sum_{n=n(r)}^{Q-1}\theta(n)^{\dim_{H}(X)}\sum_{m=n+1}^{Q}\theta(m)^{\dim_{H}(X)}\nonumber\\
& = \O\Big(\mu(X_{(y_1,\ldots,y_{n(r)})})\Big(\sum_{n=n(r)}^{Q}\theta(n)^{\dim_{H}(X)}\Big)^2\Big).
\end{align}Substituting \eqref{final bound 1}, \eqref{bound2}, and \eqref{bound3} into \eqref{three parts} we obtain $$\sum_{n,m=n(r)}^{Q}\mu(E_{n}\cap E_{m})= \O\Big(\mu(X_{(y_1,\ldots,y_{n(r)})})\Big(\sum_{n=n(r)}^{Q}\theta(n)^{\dim_{H}(X)}+\Big(\sum_{n=n(r)}^{Q}\theta(n)^{\dim_{H}(X)}\Big)^2\Big)\Big)$$ as required.

\end{proof}
\begin{proof}[Proof of statement $2$ from Theorem \ref{conformal theorem}]
By Proposition \ref{independence prop} and \eqref{final bound 1} there exists a $C>0$ such that %We start by recalling \eqref{final bound 1} $$\sum_{n=n(r)}^{Q}\mu(E_{n})\asymp \mu(X_{(y_{1},\ldots,y_{n(r)})})\sum_{n=n(r)}^{Q}\theta(n)^{\dim_{H}(X)}.$$
$$\frac{(\sum_{n=n(r)}^{Q}\mu(E_{n}))^{2}}{\sum_{n,m=n(r)}^{Q}\mu(E_{n}\cap E_m)}$$ can be bounded below by
\begin{equation}
\label{limsup equation}
\frac{\Big(\mu(X_{(y_{1},\ldots,y_{n(r)})})\sum_{n=n(r)}^{Q}\theta(n)^{\dim_{H}(X)}\Big)^2}{C\mu(X_{(y_1,\ldots,y_{n(r)})})\Big(\sum_{n=n(r)}^{Q}\theta(n)^{\dim_{H}(X)}+\Big(\sum_{n=n(r)}^{Q}\theta(n)^{\dim_{H}(X)}\Big)^2\Big)}.
\end{equation}
It is a consequence of our approximating function $\theta$ satisfying the divergence condition \eqref{Divergence condition}, that for $Q$ sufficiently large $$\sum_{n=n(r)}^{Q}\theta(n)^{\dim_{H}(X)}>1,$$ and therefore $$\sum_{n=n(r)}^{Q}\theta(n)^{\dim_{H}(X)}<\Big(\sum_{n=n(r)}^{Q}\theta(n)^{\dim_{H}(X)}\Big)^2.$$ Taking limits in \eqref{limsup equation} we obtain
\begin{equation}
\label{limsup bound}
\limsup_{Q\to\infty}\frac{(\sum_{n=n(r)}^{Q}\mu(E_{n}))^{2}}{\sum_{n,m=n(r)}^{Q}\mu(E_{n}\cap E_m)}\geq \frac{\mu(X_{(y_{1},\ldots,y_{n(r)})})}{2C}.
\end{equation}
By Proposition \ref{Divergence prop} we may apply Lemma \ref{Erdos lemma}. Applying Lemma \ref{Erdos lemma} and \eqref{limsup bound} we obtain $$\mu(\limsup E_n)\geq \frac{\mu(X_{(y_{1},\ldots,y_{n(r)})})}{2C}.$$ Thus we may conclude that \eqref{limsup density3} holds and we have completed our proof.
\end{proof}

\section{Proofs of Theorem \ref{Weak Duffin Schaeffer} and Theorem \ref{Duffin Schaeffer}}
\label{second theorems}
The critical part of the proof of Theorem \ref{conformal theorem} was obtaining estimates for $\mu(X_{I,\theta}\cap E_m)$ when $n<m\leq n+n(I,\theta).$ Indeed this was the only point in our proof where the decreasing assumption on $\theta$ was used. To weaken the monotonicity assumptions used in the proof of Theorem \ref{conformal theorem}, we need new methods to obtain bounds on $\mu(X_{I,\theta}\cap E_m)$ for $n<m\leq n+n(I,\theta).$ The hypothesis appearing in Theorem \ref{Weak Duffin Schaeffer} and the condition appearing in Theorem \ref{Duffin Schaeffer} provide different methods for bounding $\mu(X_{I,\theta}\cap E_m)$ for $n<m\leq n+n(I,\theta).$
\label{Next theorems}
\subsection{Proof of Theorem \ref{Weak Duffin Schaeffer}}
Let $z\in X$, $y\in X$ and $r>0$ all be as in the proof of Theorem \ref{conformal theorem}. As in the proof of Theorem \ref{conformal theorem}, to prove Theorem \ref{Weak Duffin Schaeffer}, it suffices to show that \eqref{limsup density3} holds. The first step in proving Theorem \ref{Weak Duffin Schaeffer} is to prove the following more general analogue of Proposition \ref{independence prop}.

%What remains of the proof is analogous to the proof of statement $2$ from Theorem \ref{conformal theorem}. We leave the remaining details to the interested reader.

\begin{prop}
\label{DufSch prop}
For any $\theta:\mathbb{N}\to (0,1/2)$ we have
\begin{align}
\sum_{n,m=n(r)}^{Q}\mu(E_{n}\cap E_{m})=\O\Big(\mu(X_{(y_{1},\ldots,y_{n(r)})})\Big(&\sum_{n=n(r)}^{Q}\theta(n)^{\dim_{H}(X)}+\sum_{n=n(r)}^{Q}\theta(n)^{\dim_{H}(X)} \log \Big(\frac{1}{\theta(n)}\Big) \nonumber\\
&+ \Big(\sum_{n=n(r)}^{Q}\theta(n)^{\dim_{H}(X)}\Big)^2\Big)\Big). \label{General big O}
\end{align}

%\begin{equation}
%\label{General big O}
%\frac{\sum_{n,m=n(r)}^{Q}\mu(E_{n}\cap E_{m})}{\mu(X_{(y_{1},\ldots,y_{n(r)})})}=\O\Big(\sum_{n=n(r)}^{Q}\theta(n)^{\dim_{H}(X)}+\sum_{n=n(r)}^{Q}\theta(n)^{\dim_{H}(X)} \log \Big(\frac{1}{\theta(n)}\Big)+\Big(\sum_{n=n(r)}^{Q}\theta(n)^{\dim_{H}(X)}\Big)^2\Big)
%\end{equation}

%$$\sum_{n,m=n(r)}^{Q}\mu(E_{n}\cap E_{m})=\O\Big(\mu(X_{(y_{1},\ldots,y_{n(r)})})\Big(\sum_{n=n(r)}^{Q}\theta(n)^{\dim_{H}(X)}+\sum_{n=n(r)}^{Q}\theta(n)^{\dim_{H}(X)} \log \Big(\frac{1}{\theta(n)}\Big)+\Big(\sum_{n=n(r)}^{Q}\theta(n)^{\dim_{H}(X)}\Big)^2\Big)\Big).$$
\end{prop}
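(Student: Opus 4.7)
The plan is to imitate the proof of Proposition \ref{independence prop} almost verbatim, changing only the way one estimates $\mu(X_{I,\theta}\cap E_m)$ in the range $n<m\leq n+n(I,\theta)$. This was the sole place in that earlier proof where the decreasing assumption on $\theta$ was used, so the two unchanged ranges will contribute exactly the first and third terms of \eqref{General big O}: the diagonal $n=m$ yields $\mu(X_{(y_{1},\ldots,y_{n(r)})})\sum_{n=n(r)}^{Q}\theta(n)^{\dim_{H}(X)}$ via the computation leading to \eqref{final bound 1}, and the case $m>n+n(I,\theta)$ yields $\mu(X_{(y_{1},\ldots,y_{n(r)})})\bigl(\sum_{n=n(r)}^{Q}\theta(n)^{\dim_{H}(X)}\bigr)^{2}$ via the computation leading to \eqref{bound3}. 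Neither of these steps used monotonicity of $\theta$, so they transfer directly.

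The genuinely new input is a uniform estimate on the ``depth'' $n(I,\theta)$. Since $X_{I,\theta}=X_{I(z_{1},\ldots,z_{n(I,\theta)})}$ was chosen so that $Diam(X_{I,\theta})\asymp Diam(X_{I})\theta(n)$, property \eqref{Weak Bernoulli diameter} forces $Diam(X_{(z_{1},\ldots,z_{n(I,\theta)})})\asymp \theta(n)$; combined with the uniform bounds $0<\inf_{V}|\phi_{i}'|\leq\sup_{V}|\phi_{i}'|<1$ built into the definition of a conformal IFS, this yields
$$n(I,\theta)\asymp \log(1/\theta(n))$$
with implied constants depending only on $\Phi$. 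With this estimate in hand, I would keep the observation from the proof of Proposition \ref{independence prop} that for $n<m\leq n+n(I,\theta)$ there is at most one $X_{J,\theta}\in \mathcal{E}_{m}$ meeting $X_{I,\theta}$, but replace the subsequent monotonicity-dependent chain of inequalities by the trivial bound $\mu(X_{I,\theta}\cap E_{m})\leq \mu(X_{I,\theta})\asymp \mu(X_{I})\theta(n)^{\dim_{H}(X)}$, obtained from \eqref{align2} and \eqref{Measure and diameter}. Summing this trivial bound over the at most $n(I,\theta)$ admissible values of $m$ gives
$$\sum_{m=n+1}^{n+n(I,\theta)}\mu(X_{I,\theta}\cap E_{m}) = \O\bigl(\mu(X_{I})\theta(n)^{\dim_{H}(X)}\log(1/\theta(n))\bigr).$$

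To conclude, I would aggregate over $I\in \D^{n}$ sharing the prefix $(y_{1},\ldots,y_{n(r)})$, using \eqref{Weak Bernoulli measure} to convert $\sum_{I}\mu(X_{I})$ into a multiple of $\mu(X_{(y_{1},\ldots,y_{n(r)})})$, and then sum over $n$ from $n(r)$ to $Q$; this produces precisely the middle term of \eqref{General big O}. The only delicate point I anticipate is verifying the uniform depth estimate $n(I,\theta)\asymp \log(1/\theta(n))$; once that is established the rest is book-keeping that parallels the argument for Proposition \ref{independence prop}. The price of abandoning monotonicity is that the trivial bound $\mu(X_{I,\theta}\cap E_{m})\leq \mu(X_{I,\theta})$ discards any decay one might hope to extract from $\theta(m)$ at large $m$, and this is exactly what accounts for the extra factor $\log(1/\theta(n))$ appearing in \eqref{General big O} when compared with Proposition \ref{independence prop}.
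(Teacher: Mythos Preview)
Your proposal is correct and follows essentially the same route as the paper: establish the depth bound $n(I,\theta)=\mathcal{O}(\log(1/\theta(n)))$, replace the monotonicity-dependent estimate in the range $n<m\leq n+n(I,\theta)$ by the trivial bound $\mu(X_{I,\theta}\cap E_m)\leq\mu(X_{I,\theta})$ summed over at most $n(I,\theta)$ values of $m$, and leave the diagonal and outer-range computations from Proposition~\ref{independence prop} intact. The only cosmetic difference is that the paper derives the depth bound directly from $Diam(X_{I(z_1,\ldots,z_N)})=\mathcal{O}(Diam(X_I)\kappa^N)$ rather than via \eqref{Weak Bernoulli diameter}, and only states the upper bound (which is all that is needed); your observation about the unique $X_{J,\theta}$ is harmless but unnecessary once you apply the trivial bound.
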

\begin{proof}
The quantity $n(I,\theta)$ can be taken to be the smallest $N\in \N$ for which $$Diam(X_{I(z_1,\ldots,z_N)})<Diam(X_{I})\theta(n).$$ It is a consequence of \eqref{Weak Bernoulli diameter} that for each $N\in \mathbb{N}$ we have $$Diam(X_{I(z_1,\ldots,z_N)})= \O( Diam(X_{I})\kappa^{N}).$$  Where $\kappa\in(0,1)$ is some constant independent of $I$ and $z$. Combining these two statements we can prove the following bound
\begin{equation}
\label{n bound}
n(I,\theta)=\O\Big( \log \Big( \frac{1}{\theta(n)}\Big)\Big).
\end{equation}In the derivation of \eqref{n bound} we use the fact that $\theta$ only takes values in the interval $(0,1/2)$. This assumption means we don't need to worry about constants or $\log (\theta(n)^{-1})$ being negative. Applying \eqref{n bound} we have
\begin{equation}
\label{imply}
\sum_{m=n+1}^{n+n(I,\theta)}\mu(X_{I,\theta}\cap E_{m}) \leq n(I,\theta) \mu(X_{I,\theta})=\O\Big(\mu(X_{I,\theta}) \log \Big(\frac{1}{\theta(n)}\Big )\Big).
\end{equation} By similar arguments to those given in Section \ref{Main theorem}, we can combine properties \eqref{Weak Bernoulli measure}--\eqref{cylinder approx} with \eqref{imply} to prove
\begin{equation}
\label{n intersection bounds}
\sum_{m=n+1}^{n+n(I,\theta)}\mu(X_{I,\theta}\cap E_{m}) =\O\Big(\mu(X_{I})\theta(n)^{\dim_{H}(X)} \log \Big(\frac{1}{\theta(n)}\Big )\Big).
\end{equation}
We now obtain an analogue of \eqref{three parts}:
\begin{align}
\sum_{n,m=n(r)}^{Q}\mu(E_{n}\cap E_{m})&=2\sum_{n=n(r)}^{Q}\mu(E_{n})+2\sum_{n=n(r)}^{Q-1}\sum_{m=n+1}^{Q}\mu(E_{n}\cap E_{m})\nonumber\\
&\asymp\sum_{n=n(r)}^{Q}\mu(E_{n})+\sum_{n=n(r)}^{Q-1}\sum_{\substack{I\in\D^{n} \\ (i_{1},\ldots,i_{n(r)})=(y_{1},\ldots,y_{n(r)})}}\sum_{m=n+1}^{Q}\mu(X_{I,\theta}\cap E_{m})\nonumber\\
&\asymp\sum_{n=n(r)}^{Q}\mu(E_{n})+\sum_{n=n(r)}^{Q-1}\sum_{\substack{I\in\D^{n} \\ (i_{1},\ldots,i_{n(r)})=(y_{1},\ldots,y_{n(r)})}}\sum_{m=n+1}^{n+n(I,\theta)}\mu(X_{I,\theta}\cap E_{m})+\nonumber\\
& +\sum_{n=n(r)}^{Q-1}\sum_{\substack{I\in\D^{n} \\ (i_{1},\ldots,i_{n(r)})=(y_{1},\ldots,y_{n(r)})}}\sum_{m=n+n(I,\theta)+1}^{Q}\mu(X_{I,\theta}\cap E_{m})\nonumber\\
&= \sum_{n=n(r)}^{Q}\mu(E_{n})+\O\Big(\sum_{n=n(r)}^{Q-1}\sum_{\substack{I\in\D^{n} \nonumber\\ (i_{1},\ldots,i_{n(r)})=(y_{1},\ldots,y_{n(r)})}}\mu(X_{I}) \theta(n)^{\dim_{H}(X)} \log \Big(\frac{1}{\theta(n)}\Big) \Big)\nonumber\\
&+\O\Big(\sum_{n=n(r)}^{Q-1}\sum_{\substack{I\in\D^{n} \\ (i_{1},\ldots,i_{n(r)})=(y_{1},\ldots,y_{n(r)})}}\sum_{m=n+n(I,\theta)+1}^{Q} \mu(X_{I})\theta(n)^{\dim_{H}(X)}\theta(m)^{\dim_{H}(X)}\Big)\label{Duplication}\\
&\leq  \sum_{n=n(r)}^{Q}\mu(E_{n})+\O\Big(\sum_{n=n(r)}^{Q-1}\sum_{\substack{I\in\D^{n} \\ (i_{1},\ldots,i_{n(r)})=(y_{1},\ldots,y_{n(r)})}}\mu(X_{I}) \theta(n)^{\dim_{H}(X)} \log \Big(\frac{1}{\theta(n)}\Big) \Big)\nonumber\\
&+\O\Big(\sum_{n=n(r)}^{Q}\sum_{\substack{I\in\D^{n} \\ (i_{1},\ldots,i_{n(r)})=(y_{1},\ldots,y_{n(r)})}}\sum_{m=n}^{Q} \mu(X_{I})\theta(n)^{\dim_{H}(X)}\theta(m)^{\dim_{H}(X)}\Big)\label{three split}.
\end{align}In \eqref{Duplication} we used the bounds given by \eqref{n intersection bounds} and \eqref{intersection bound 2}. We now focus on the three terms in \eqref{three split} individually. By identical arguments to those given in Proposition \ref{independence prop}, we have the following bounds for the first and third term:
\begin{equation}
\label{1st term bound}
\sum_{n=n(r)}^{Q}\mu(E_{n})\asymp \mu(X_{(y_{1},\ldots,y_{n(r)})}) \sum_{n=n(r)}^{Q}\theta(n)^{\dim_{H}(X)}
\end{equation}
\begin{equation}
\label{3rd term bound}
\sum_{n=n(r)}^{Q}\sum_{\substack{I\in\D^{n} \\ (i_{1},\ldots,i_{n(r)})=(y_{1},\ldots,y_{n(r)})}}\sum_{m=n}^{Q} \mu(X_{I})\theta(n)^{\dim_{H}(X)}\theta(m)^{\dim_{H}(X)}\asymp \mu(X_{(y_{1},\ldots,y_{n(r)})})\Big(\sum_{n=n(r)}^{Q}\theta(n)^{\dim_{H}(X)}\Big)^2.
\end{equation}
It remains to bound the second term:
\begin{align}
&\sum_{n=n(r)}^{Q-1}\sum_{\substack{I\in\D^{n} \\ (i_{1},\ldots,i_{n(r)})=(y_{1},\ldots,y_{n(r)})}}\mu(X_{I}) \theta(n)^{\dim_{H}(X)} \log \Big(\frac{1}{\theta(n)}\Big)\nonumber\\
&\asymp \mu(X_{(y_{1},\ldots,y_{n(r)})})\sum_{n=n(r)}^{Q-1}\sum_{J\in\D^{n-n(r)} }\mu(X_{J}) \theta(n)^{\dim_{H}(X)} \log \Big(\frac{1}{\theta(n)}\Big) &&(\textrm{By } \eqref{Weak Bernoulli measure})\nonumber\\
&\asymp \mu(X_{(y_{1},\ldots,y_{n(r)})})\sum_{n=n(r)}^{Q-1}\theta(n)^{\dim_{H}(X)} \log \Big(\frac{1}{\theta(n)}\Big).\label{used bound}
%\nonumber\\&=\O\Big(\mu(X_{(y_{1},\ldots,y_{n(r)})})\Big(\sum_{n=n(r)}^{Q}\theta(n)^{\dim_{H}(X)}\Big)^2\Big).&& (\textrm{By } \eqref{DufSch Divergence})
\end{align}Substituting \eqref{1st term bound}, \eqref{3rd term bound}, and \eqref{used bound} into \eqref{three split} we obtain \eqref{General big O}.
\end{proof}
Equipped with Proposition \ref{DufSch prop} we are now in a position to prove Theorem \ref{Weak Duffin Schaeffer}.

\begin{proof}[Proof of Theorem \ref{Weak Duffin Schaeffer}]
We assume that $\theta(n)\to 0$ as $n\to \infty$. Consequently, dividing by a positive constant if necessary, we may assume that $\theta$ satisfies the hypothesis of Proposition \ref{DufSch prop}. The case where $\theta$ does not converge to $0$ is easily dealt with. Under this assumption it can be shown that $W(\theta,z)$ is always a set of full $\mu$ measure for any $z\in X$. We omit the details of this fact.

Proposition \ref{DufSch prop} and \eqref{1st term bound} implies that there exists a $C>0$ such that
$$\frac{(\sum_{n=n(r)}^{Q}\mu(E_{n}))^{2}}{\sum_{n,m=n(r)}^{Q}\mu(E_{n}\cap E_m)}$$ can be bounded below by
\begin{equation}
\label{big guy}
\frac{\mu(X_{(y_{1},\ldots,y_{n(r)})})\Big( \sum_{n=n(r)}^{Q}\theta(n)^{\dim_{H}(X)}\Big)^2}{C\Big(\sum_{n=n(r)}^{Q}\theta(n)^{\dim_{H}(X)}+\sum_{n=n(r)}^{Q}\theta(n)^{\dim_{H}(X)} \log \Big(\frac{1}{\theta(n)}\Big)+\Big(\sum_{n=n(r)}^{Q}\theta(n)^{\dim_{H}(X)}\Big)^2 \Big)}.
\end{equation}For $Q$ sufficiently large $$\sum_{n=n(r)}^{Q}\theta(n)^{\dim_{H}(X)}>1$$ and therefore
%\begin{align}
%&\frac{(\sum_{n=n(r)}^{Q}\mu(E_{n}))^{2}}{\sum_{n,m=n(r)}^{Q}\mu(E_{n}\cap E_m)}\nonumber\\
%\geq &\frac{\mu(X_{(y_{1},\ldots,y_{n(r)})})\Big( %\sum_{n=n(r)}^{Q}\theta(n)^{\dim_{H}(X)}\Big)^2}{C\Big(\sum_{n=n(r)}^{Q}\theta(n)^{\dim_{H}(X)}+\sum_{n=n(r)}^{Q}\theta(n)^{\dim_{H}(X)} \log \Big(\frac{1}{\theta(n)}\Big)+\Big(\sum_{n=n(r)}^{Q}\theta(n)^{\dim_{H}(X)}\Big)^2 \Big)}.\label{big guy}
%\end{align}

%\begin{equation}
%\label{big guy}
%\frac{(\sum_{n=n(r)}^{Q}\mu(E_{n}))^{2}}{\sum_{n,m=n(r)}^{Q}\mu(E_{n}\cap E_m)}\geq \frac{\mu(X_{(y_{1},\ldots,y_{n(r)})})\Big( \sum_{n=n(r)}^{Q}\theta(n)^{\dim_{H}(X)}\Big)^2}{C\Big(\sum_{n=n(r)}^{Q}\theta(n)^{\dim_{H}(X)}+\sum_{n=n(r)}^{Q}\theta(n)^{\dim_{H}(X)} \log \Big(\frac{1}{\theta(n)}\Big)+\Big(\sum_{n=n(r)}^{Q}\theta(n)^{\dim_{H}(X)}\Big)^2 \Big)}
%\end{equation}

$$\sum_{n=n(r)}^{Q}\theta(n)^{\dim_{H}(X)}<\Big(\sum_{n=n(r)}^{Q}\theta(n)^{\dim_{H}(X)}\Big)^2.$$ We take the limsup in \eqref{big guy}. It is a consequence of the above remark and our additional assumption \eqref{DufSch Divergence}, that there exists a $C'>0$ such that
\begin{equation*}
\limsup_{Q\to\infty} \frac{(\sum_{n=n(r)}^{Q}\mu(E_{n}))^{2}}{\sum_{n,m=n(r)}^{Q}\mu(E_{n}\cap E_m)}\geq \frac{\mu(X_{(y_{1},\ldots,y_{n(r)})})}{C'}.
\end{equation*}Importantly $C'$ is independent of $y$ and $r$. Proposition \ref{Divergence prop} still holds for this choice of $\theta$, so we may apply Lemma \ref{Erdos lemma}. Applying Lemma \ref{Erdos lemma} we may conclude that $$\mu(\limsup E_n)\geq \frac{\mu(X_{(y_{1},\ldots,y_{n(r)})})}{C'}.$$ Thus \eqref{limsup density3} holds for any $\theta$ satisfying \eqref{DufSch Divergence} and we have completed our proof.
\end{proof}

\subsection{Proof of Theorem \ref{Duffin Schaeffer}}
Assume that $y\in X$ and $r>0$ are all as in the proof of Theorem \ref{conformal theorem}. We will show that for any $z$ satisfying an additional condition, whenever $\theta$ satisfies \eqref{Divergence condition} then \eqref{limsup density3} will hold.

As in the proof of Theorem \ref{Weak Duffin Schaeffer} we need to control $\mu(X_{I,\theta}\cap E_{m})$ for $n<m\leq n+n(I,\theta).$ We start with a simple observation. Let $I\in \D^n$ and $J\in \D^m$ for $n<m\leq n+n(I,\theta).$ Consider the words $I_{\theta}:=(i_1,\ldots,i_n,z_1,\ldots,z_{n(I,\theta)})$ and $J_{\theta}:=(j_1,\ldots,j_m,z_1,\ldots,z_{n(J,\theta)}).$ If $X_{I,\theta}\cap X_{J,\theta}\neq \emptyset$ then either $I_{\theta}$ is a prefix of $J_{\theta},$ or $J_{\theta}$ is a prefix of $I_{\theta},$ i.e.,
\begin{equation}
\label{Case 1}
I_{\theta}=(j_1,\ldots, j_m,z_1,\ldots, z_{n(I,\theta)-(m-n)})
\end{equation} or
\begin{equation}
\label{Case 2}
J_{\theta}=(i_1,\ldots, i_n,z_1,\ldots, z_{n(J,\theta)+(m-n)}).
\end{equation}If \eqref{Case 1} holds then
\begin{equation}
\label{Case 2a}
(z_1,\ldots, z_{n(I,\theta)-(m-n)})=(z_{m-n+1},\ldots, z_{n(I,\theta)}).
\end{equation} Alternatively, if \eqref{Case 2} holds then
\begin{equation}
\label{Case 2b}
(z_{1},\ldots, z_{n(J,\theta)})=(z_{m-n+1},\ldots,z_{n(J,\theta)+(m-n)}).
\end{equation}Consequently, we see that the nonempty intersection of $X_{I,\theta}$ and $X_{J,\theta}$ implies some nontrivial relations for the coding $(z_i).$ This leads to the following definition.

We say that $z$ has a \emph{leading block coding} if there exists a sequence $(z_i)\in \D^{\N},$ such that $(z_i)$ is a coding of $z$ and there exists $l\in \N$ such that the initial word $(z_1,\ldots, z_{l})$ appears only finitely many times in $(z_i).$ Our first step in the proof of Theorem \ref{Duffin Schaeffer} is the following proposition.

\begin{prop}
\label{leading prop}
Suppose $z$ has a leading block coding and $\theta(n)\to 0$ as $n\to\infty$. Then $$\sum_{n,m=n(r)}^{Q}\mu(E_{n}\cap E_{m})=\O\Big(\mu(X_{(y_{1},\ldots,y_{n(r)})})\Big(\sum_{n=n(r)}^{Q}\theta(n)^{\dim_{H}(X)}+\Big(\sum_{n=n(r)}^{Q}\theta(n)^{\dim_{H}(X)}\Big)^2\Big)\Big).$$
\end{prop}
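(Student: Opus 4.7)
The plan is to follow the architecture of the proof of Proposition \ref{independence prop}, reusing the estimates for the diagonal terms and for the ``far'' pairs (where $m>n+n(I,\theta)$), and to replace the monotonicity-based bound \eqref{intersection bound 1} on the ``close'' pairs (where $n<m\le n+n(I,\theta)$) by a combinatorial argument exploiting the leading block hypothesis. The far pairs contribute $\O\bigl(\mu(X_{(y_1,\ldots,y_{n(r)})})\bigl(\sum_{n=n(r)}^{Q}\theta(n)^{\dim_H(X)}\bigr)^2\bigr)$ exactly as in \eqref{bound3}, and the diagonal contributes $\O\bigl(\mu(X_{(y_1,\ldots,y_{n(r)})})\sum_{n=n(r)}^{Q}\theta(n)^{\dim_H(X)}\bigr)$ exactly as in \eqref{final bound 1}, so the task reduces to bounding the total close contribution by $\O\bigl(\mu(X_{(y_1,\ldots,y_{n(r)})})\sum_{n=n(r)}^{Q}\theta(n)^{\dim_H(X)}\bigr)$.

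The key new input is the combinatorial consequence of \eqref{Case 2a} and \eqref{Case 2b}. Fix a leading block coding $(z_i)$ of $z$ and choose $l\in\N$ together with $M\in\N$ such that $(z_k,\ldots,z_{k+l-1})\neq(z_1,\ldots,z_l)$ for all $k>M$. Suppose $X_{I,\theta}\cap X_{J,\theta}\neq\emptyset$ with $I\in\D^n$, $J\in\D^m$, and $n<m\le n+n(I,\theta)$. Reading off the first $l$ coordinates of whichever of \eqref{Case 2a}, \eqref{Case 2b} applies forces either the matched block to be too short, i.e.\ $n(I,\theta)-(m-n)<l$ in Case~1 or $n(J,\theta)<l$ in Case~2, or else $(z_1,\ldots,z_l)=(z_{m-n+1},\ldots,z_{m-n+l})$, which by the choice of $M$ requires $m-n<M$. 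Using \eqref{Weak Bernoulli diameter}, the condition $n(J,\theta)\ge l$ is equivalent to $\theta(m)$ being at most a fixed positive constant depending only on $Diam(X_{(z_1,\ldots,z_{l-1})})$; since $\theta(m)\to 0$, there exists $N_0\in\N$ such that $n(J,\theta)\ge l$ for every $J$ whenever $m\ge N_0$. Consequently, for $n\ge N_0$, any surviving close intersection forces $m$ to lie in the bottom window $n<m<n+M$ or the top window $n+n(I,\theta)-l<m\le n+n(I,\theta)$, each of cardinality at most a constant independent of $n$ and $I$.

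Inside these two windows of total size $\O(1)$, the trivial estimate $\mu(X_{I,\theta}\cap E_m)\le\mu(X_{I,\theta})\asymp\mu(X_I)\theta(n)^{\dim_H(X)}$ from \eqref{align2} and \eqref{Measure and diameter} suffices. Summing over $m$ in the two windows yields $\O(\mu(X_I)\theta(n)^{\dim_H(X)})$; summing over $I\in\D^n$ with prefix $(y_1,\ldots,y_{n(r)})$ via \eqref{Weak Bernoulli measure} yields $\O(\mu(X_{(y_1,\ldots,y_{n(r)})})\theta(n)^{\dim_H(X)})$; and summing over $n\ge n(r)$ gives the desired $\O\bigl(\mu(X_{(y_1,\ldots,y_{n(r)})})\sum_{n=n(r)}^{Q}\theta(n)^{\dim_H(X)}\bigr)$. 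The finitely many exceptional $n<N_0$ contribute at most a $Q$-independent quantity which is absorbed into the implicit constant. Combining this with the diagonal and far-pair estimates yields the claim. The main obstacle is the case analysis in the second paragraph: one must verify that both matching equations \eqref{Case 2a}, \eqref{Case 2b} -- in combination with the finitely recurring leading block and with $\theta(m)\to 0$ -- genuinely eliminate \emph{all} close intersections outside two windows of bounded size. This is precisely where the hypothesis on $z$, rather than on $\theta$, does the essential work.
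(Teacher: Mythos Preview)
Your proposal is correct and follows essentially the same approach as the paper's proof: reduce to bounding the close pairs $n<m\le n+n(I,\theta)$, then use the leading block hypothesis together with \eqref{Case 2a}--\eqref{Case 2b} and the fact that $n(J,\theta)\to\infty$ (from $\theta\to 0$) to show that the number of $m$ in this range with $X_{I,\theta}\cap E_m\neq\emptyset$ is bounded uniformly in $n$ and $I$, whence the trivial bound $\mu(X_{I,\theta}\cap E_m)\le\mu(X_{I,\theta})$ finishes. Your explicit ``bottom window / top window'' bookkeeping is slightly more detailed than the paper's presentation, but the argument is the same.
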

\begin{proof}
Replicating the arguments used in the proof of Proposition \ref{independence prop}, it suffices to show that
\begin{equation}
\label{show equation}
\sum_{n=n(r)}^{Q-1}\sum_{\substack{I\in\D^{n} \\ (i_{1},\ldots,i_{n(r)})=(y_{1},\ldots,y_{n(r)})}}\sum_{m=n+1}^{n+n(I,\theta)}\mu(X_{I,\theta}\cap E_{m})=\O\Big(\mu(X_{(y_{1},\ldots,y_{n(r)})})\sum_{n=n(r)}^{Q}\theta(n)^{\dim_{H}(X)}\Big).
\end{equation}Assume that $(z_1,\ldots,z_l)$ appears only finitely many times in the coding $(z_i)$. Since $\theta(n)\to 0$ we must have $n(I,\theta)\to \infty$ as $|I|\to\infty.$ It follows that for $n$ sufficiently large, if $X_{I,\theta}\cap X_{J,\theta}\neq \emptyset$ and \eqref{Case 2b} occurs, then $$(z_1,\ldots,z_l)=(z_{m-n+1},\ldots,z_{m-n+l}).$$ This can only occur finitely many times by definition. Suppose $X_{I,\theta}\cap X_{J,\theta}\neq \emptyset$ and \eqref{Case 2a} occurs, then if $n<m\leq n+n(I,\theta)-l$ we have $$(z_1,\ldots, z_{l})=(z_{m-n+1},\ldots, z_{m-n+l}).$$ Which can only occur finitely many times by definition. As there at most $l-1$ values of $m$ satisfying $n+n(I,\theta)-l<m\leq n+n(I,\theta),$ it follows that $$\sup_{n\in\mathbb{N}}\sup_{I\in \D^n}\#\Big\{m\in\mathbb{N}:n<m\leq n+n(I,\phi)\textrm{ and }X_{I,\theta}\cap E_m\neq \emptyset\Big\}<\infty.$$Therefore,
\begin{align*}
\sum_{n=n(r)}^{Q-1}\sum_{\substack{I\in\D^{n} \\ (i_{1},\ldots,i_{n(r)})=(y_{1},\ldots,y_{n(r)})}}\sum_{m=n+1}^{n+n(I,\theta)}\mu(X_{I,\theta}\cap E_{m})&=\O\Big(\sum_{n=n(r)}^{Q-1}\sum_{\substack{I\in\D^{n} \\ (i_{1},\ldots,i_{n(r)})=(y_{1},\ldots,y_{n(r)})}}\mu(X_{I,\theta})\Big)\\
&=\O \Big(\mu(X_{(y_{1},\ldots,y_{n(r)})})\sum_{n=n(r)}^{Q-1}\sum_{J\in\D^{n-n(r)}}\mu(X_{J})\theta(n)^{\dim_{H}(X)}\Big)\\
&=\O \Big(\mu(X_{(y_{1},\ldots,y_{n(r)})})\sum_{n=n(r)}^{Q}\theta(n)^{\dim_{H}(X)}\Big).
\end{align*}The proof of the second equality above follows from the same arguments used in the proof of Theorem \ref{Weak Duffin Schaeffer}. Thus we have proved \eqref{show equation} and our proof is complete.

%We conclude by remarking that since $\sum_{n=n(r)}^{Q}\theta(n)^{\dim_{H}(X)}$ diverges, for $Q$ sufficiently large we must have $\sum_{n=n(r)}^{Q}\theta(n)^{\dim_{H}(X)}>1,$ and therefore $$\sum_{n=n(r)}^{Q-1}\sum_{\substack{I\in\D^{n} \\ (i_{1},\ldots,i_{n(r)})=(y_{1},\ldots,y_{n(r)})}}\sum_{m=n+1}^{n+n(I,\theta)}\mu(X_{I,\theta}\cap E_{m})=\O\Big(\mu(X_{(y_{1},\ldots,y_{n(r)})})\Big(\sum_{n=n(r)}^{Q}\theta(n)^{\dim_{H}(X)}\Big)^2\Big)$$ as required.
\end{proof} By an analogous argument to that given in the proof of Theorem \ref{conformal theorem}, we may combine Proposition \ref{leading prop} with Lemma \ref{Erdos lemma} and Proposition \ref{Divergence prop}, to conclude that if $z$ has a leading block coding and $\theta$ is any function satisfying $\theta(n)\to 0$ as $n\to\infty$ and \eqref{Divergence condition}, then \eqref{limsup density3} holds and consequently $\mu$ almost every $x$ is an element of $W(\theta,z)$. As we previously remarked upon, the case where $\theta$ does not converge to $0$ is easily dealt with. Under this assumption we always have that $W(\theta,z)$ is a set of full $\mu$ measure for any $z\in X$. Summarising the above, we have the following result.
\begin{prop}
\label{leading block 1}
Suppose $z$ has a leading block coding and $\theta:\mathbb{N}\to\mathbb{R}_{\geq 0}$ satisfies \eqref{Divergence condition}, then $\mu$ almost every $x$ is an element of $W(\theta,z)$.
\end{prop}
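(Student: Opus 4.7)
The plan is to mimic the proof of statement 2 of Theorem \ref{conformal theorem} verbatim, with Proposition \ref{leading prop} substituted for Proposition \ref{independence prop}. The key observation is that Proposition \ref{leading prop} produces exactly the same quadratic bound as Proposition \ref{independence prop}, but under the weaker hypothesis ``$z$ has a leading block coding'' in place of ``$\theta$ is decreasing''. Once one has that bound, the rest of the argument is ready-made from Section \ref{Main theorem}.

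First I would dispose of the trivial case in which $\theta(n) \not\to 0$. Under this assumption there is a constant $c>0$ and infinitely many $n$ for which $\theta(n) \geq c$; arguing with $\mu$-density in the cylinders $X_{I,\theta}$ (whose rank goes to infinity, but whose radii remain comparable to $Diam(X_I)$ up to the factor $c$) one checks that $\mu$-almost every $x \in X$ lies in $W(\theta,z)$. So from now on I assume $\theta(n) \to 0$; dividing by a harmless positive constant I may further assume $\theta(n) \in (0,1/2)$, which is precisely the hypothesis of Proposition \ref{leading prop}.

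Next I would fix $y\in X$ and $0<r<Diam(X)$, introduce $n(r)$, the cylinders $X_{I,\theta}$, and the sets $E_n$ exactly as in Section \ref{Main theorem}, and reduce (as there) the statement to the local density bound \eqref{limsup density3}. Proposition \ref{Divergence prop} applies as before (its proof uses only \eqref{Weak Bernoulli measure}--\eqref{cylinder approx} and \eqref{Divergence condition}, not the monotonicity of $\theta$), so $\sum_{n=n(r)}^{\infty}\mu(E_n)=\infty$ and Lemma \ref{Erdos lemma} is available. Proposition \ref{leading prop} gives the upper bound
\[
\sum_{n,m=n(r)}^{Q}\mu(E_{n}\cap E_{m})=\mathcal{O}\Bigl(\mu(X_{(y_{1},\ldots,y_{n(r)})})\Bigl(\sum_{n=n(r)}^{Q}\theta(n)^{\dim_{H}(X)}+\Bigl(\sum_{n=n(r)}^{Q}\theta(n)^{\dim_{H}(X)}\Bigr)^2\Bigr)\Bigr),
\]
while \eqref{1st term bound} still provides $\sum_{n=n(r)}^{Q}\mu(E_n)\asymp \mu(X_{(y_1,\ldots,y_{n(r)})})\sum_{n=n(r)}^{Q}\theta(n)^{\dim_{H}(X)}$. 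Plugging these into Lemma \ref{Erdos lemma} and observing that by divergence the linear sum is eventually dominated by its square, one concludes as in the final step of the proof of Theorem \ref{conformal theorem} that $\mu(\limsup E_n)\geq \mu(X_{(y_1,\ldots,y_{n(r)})})/(2C)$ for a constant $C>0$ independent of $y$ and $r$. This is \eqref{limsup density3}.

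Finally, converting \eqref{limsup density3} back to \eqref{limsup density} via Ahlfors regularity of $\mu$ yields a uniform density estimate $\mu(W(\theta,z)\cap B(y,r))\geq c\,\mu(B(y,r))$ valid at every $y\in X$ and every sufficiently small $r$. Since $\mu$ is a doubling probability measure supported on the Ahlfors regular compact set $X$, Lemma \ref{Density lemma} applies and delivers $\mu(X\setminus W(\theta,z))=0$, which is the desired conclusion. The main (and only) obstacle in this line of argument was isolating the correlation bound, and that has already been done in Proposition \ref{leading prop}; everything else is a transcription of the scheme established in Section \ref{Main theorem}, so no essentially new difficulty arises here.
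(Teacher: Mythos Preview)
Your proposal is correct and mirrors the paper's own argument: the paper likewise disposes of the case $\theta(n)\not\to 0$ separately and then combines Proposition \ref{leading prop} with Proposition \ref{Divergence prop} and Lemma \ref{Erdos lemma}, exactly as in the proof of Theorem \ref{conformal theorem}, to obtain \eqref{limsup density3} and hence the full-measure conclusion via Lemma \ref{Density lemma}. One small inaccuracy: the hypothesis of Proposition \ref{leading prop} is $\theta(n)\to 0$ rather than $\theta(n)\in(0,1/2)$ (the latter is the hypothesis of Proposition \ref{DufSch prop}), but this does not affect your argument.
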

To prove Theorem \ref{Duffin Schaeffer} we need to prove that the set of $z$ with a leading block coding has full Hausdorff dimension.

\begin{prop}
\label{leading block 2}
$\dim_{H}(\{z\in X:z \textrm{ has a leading block coding}\})=\dim_{H}(X)$.
\end{prop}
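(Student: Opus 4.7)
The plan is to exhibit, for every $\epsilon>0$, a subset of $\{z\in X:z \text{ has a leading block coding}\}$ with Hausdorff dimension at least $\dim_{H}(X)-\epsilon$; a countable union over $\epsilon\to 0$ then gives the proposition. We may assume $|\D|\geq 2$ (otherwise $X$ is a single point and the statement is trivial). For each large $l$ I would choose a \emph{non-self-overlapping} word $J_l=(j_1,\ldots,j_l)\in\D^l$, meaning that no proper nonempty prefix of $J_l$ equals a proper nonempty suffix; the explicit choice $J_l=(1,2,2,\ldots,2)$ works. Let $\Sigma_l\subseteq\D^\N$ be the subshift of finite type consisting of sequences in which $J_l$ does not occur as a subword. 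For any $(x_i)\in\Sigma_l$ and $\omega=J_l\cdot(x_i)$, the word $J_l$ occurs in $\omega$ only at position $1$: an occurrence starting at position $p$ with $2\leq p\leq l$ would force $(j_p,\ldots,j_l)=(j_1,\ldots,j_{l-p+1})$, violating non-self-overlap, while an occurrence starting at position $p\geq l+1$ would lie entirely in the tail $(x_i)$ and contradict the definition of $\Sigma_l$. Hence every point of $\phi_{J_l}(\pi(\Sigma_l))$ has a leading block coding with leading block $J_l$, and since $\phi_{J_l}$ is a bi-Lipschitz injection on $X$ the problem reduces to showing $\dim_H(\pi(\Sigma_l))\to\dim_H(X)$ as $l\to\infty$.

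For this last step the natural approach is the Bowen-type formula for conformal iterated function systems restricted to a subshift of finite type, standard in the Mauldin--Urbanski theory of conformal graph directed Markov systems: under the strong separation condition, $\dim_H(\pi(\Sigma_l))$ equals the unique $s_l$ for which
$$\lim_{n\to\infty}\frac{1}{n}\log \sup_{x\in X}\sum_{\substack{I\in\D^n\\ J_l \text{ is not a subword of } I}}|\phi_{I}'(x)|^{s_l}=0.$$
As $l\to\infty$ the fraction of length-$n$ words containing $J_l$ tends to zero uniformly in $n$, so this modified pressure converges pointwise to the pressure in \eqref{Pressure dimension}; by strict monotonicity and continuity of the pressure in $s$, the zeros satisfy $s_l\to\dim_H(X)$, which is what is needed.

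The main obstacle is making the convergence $s_l\to\dim_H(X)$ fully self-contained rather than relying on a black-box citation. A hands-on alternative is to construct by hand a sub-IFS $\Psi_{l,N}=\{\phi_I:I\in\D^N,\ J_l\text{ is not a subword of }I,\ I\text{ ends with a fixed buffer block }B_l\}$ for $N\gg l$, where the short buffer $B_l$ is chosen so that concatenating any two words of $\Psi_{l,N}$ cannot produce $J_l$ across the boundary. Then $\Psi_{l,N}$ inherits the strong separation condition from $\Phi$, its attractor lies inside $\pi(\Sigma_l)$, and a direct count of words combined with the standard Bowen formula \eqref{Pressure dimension} applied to $\Psi_{l,N}$ shows that its Hausdorff dimension tends to $\dim_H(X)$ as $N,l\to\infty$, which suffices.
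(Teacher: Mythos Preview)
Your overall architecture is correct and parallel to the paper's: build a sub-attractor of points whose coding avoids a chosen word, prepend that word as a leading block, and transfer dimension by bi-Lipschitz invariance of $\phi_{J}$. The paper's implementation is, however, markedly simpler. Rather than a non-self-overlapping word and a subshift of finite type, the paper fixes a digit $i\in\D$, removes the single block $i^{N}$ from $\D^{N}$, and works with the ordinary conformal IFS $\{\phi_{I}:I\in\D^{N}\setminus\{i^{N}\}\}$ and its attractor $X^{i,N}$; the standard Bowen formula applies directly to this IFS and gives $\dim_{H}(X^{i,N})\to\dim_{H}(X)$ as $N\to\infty$ with no appeal to thermodynamic formalism on SFT's. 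The leading block is then $i^{2N}$ rather than $i^{N}$: the point is that any sequence in $(\D^{N}\setminus\{i^{N}\})^{\mathbb N}$ automatically avoids $i^{2N}$, because an occurrence of $i^{2N}$ necessarily covers an entire $N$-block and would force that block to equal $i^{N}$. This single observation simultaneously replaces your non-self-overlap hypothesis and your buffer-block construction, so the paper's route is both shorter and more self-contained.

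One sentence in your write-up is actually false as written: ``as $l\to\infty$ the fraction of length-$n$ words containing $J_l$ tends to zero uniformly in $n$''. For each fixed $l$ this fraction tends to $1$ as $n\to\infty$, so uniformity fails badly. What you need, and what is true, is that the \emph{exponential growth rate} of words of length $n$ avoiding $J_l$ tends to $\log|\D|$ as $l\to\infty$ (equivalently the topological entropy of $\Sigma_l$ converges to the full entropy), which then forces $s_l\to\dim_{H}(X)$ by monotonicity and continuity of the pressure. Your buffer-block sketch does justify this, but the quoted sentence should be replaced.
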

\begin{proof}
Let $Y:=\{z\in X:z \textrm{ has a leading block coding}\}$. Let $N\in\N$ and let us fix a digit $i\in \D$. Consider the word $i^{N}=(i,\ldots,i)\in \D^{N}$ and the IFS $\{\phi_{I}\}_{\substack{I\in \D^{N}\setminus \{i^{N}}\}}$ with corresponding attractor $X^{i,N}$. For any $\epsilon >0$ we can pick $N$ sufficiently large that
\begin{equation}
\label{BiLipschitz}
\dim_{H}(X)-\epsilon< \dim_{H}(X^{i,N})\leq \dim_{H}(X).
\end{equation}Now consider the set $\phi_{i^{2N}}(X^{i,N}),$ where $i^{2N}\in \D^{2N}$ is the word consisting of $2N$ consecutive $i$'s. Every element of $\phi_{i^{2N}}(X^{i,N})$ has a coding that begins with $i^{2N}$ and for which $i^{2N}$ occurs only finitely many times. This is because every sequence in $(\D^{N}\setminus \{i^{N}\})^{\mathbb{N}}$ cannot contain the word $i^{2N}$.  Therefore $\phi_{i^{2N}}(X^{i,N})\subseteq Y.$ Since $\phi_{i^{2N}}$ is a bi-Lipschitz map it follows from \eqref{BiLipschitz} that $$\dim_{H}(X)-\epsilon<\dim_{H}(\phi_{i^{2N}}(X^{i,N}))\leq \dim_{H}(X).$$ Consequently, $$\dim_{H}(X)-\epsilon<\dim_{H}(Y)\leq \dim_{H}(X).$$ Since $\epsilon$ is arbitrary we must have $\dim_{H}(Y)=\dim_{H}(X)$ as required.
\end{proof}
Combining Proposition \ref{leading block 1} and Proposition \ref{leading block 2} we may conclude Theorem \ref{Duffin Schaeffer}.

\section{The mass transference principle and some applications}
\label{mass transference}
Suppose $\Psi:\D^{*}\to\mathbb{R}$ is such that $\sum_{n=1}^{\infty}\sum_{I\in \D^{n}}\Psi(I)^{\dim_{H}(X)}<\infty$, then by Theorem \ref{conformal theorem} we know that $\mathcal{H}^{\dim_{H}(X)}(W(\Psi,z))=0$ for any $z\in X$. Under this assumption it is natural to ask what is the Hausdorff dimension of $W(\Psi,z)$. In this section we obtain a partial solution to this question by employing the mass transference principle introduced by Beresnevich and Velani \cite{BerVel}. We end this section with some applications of this result.

\subsection{The mass transference principle}
The first result of this section is the following theorem.
\begin{thm}
\label{Hausdorff theorem}Let $\Phi$ be an iterated function system with attractor $X$.
\begin{enumerate}
  \item Let $\Psi:\D^{*}\to\mathbb{R}_{\geq 0}$ and $s> 0$. Suppose $$\sum_{n=1}^{\infty} \sum_{I\in\D^{n}} \Psi(I)^{s}<\infty,$$ then $\mathcal{H}^{s}(W(\Psi,z))=0$ for all $z\in X$.
  \item If $\Phi$ is a conformal iterated function system satisfying the open set condition and $\theta:\mathbb{N}\to\mathbb{R}_{\geq 0}$ is a decreasing function that satisfies \eqref{Divergence condition},
  %$$\sum_{n=1}^{\infty} \sum_{I\in\{1,\ldots,m\}^{n}} Diam(X_I)\Psi(n)=\infty$$
  then $$\mathcal{H}^{\dim_{H}(X)/t}(W((Diam(X_{I})\theta(|I|))^{t},z))=\mathcal{H}^{\dim_{H}(X)/t}(X)$$ for all $z\in X$ and $t\geq 1$.
  %\item If $\Phi$ is a collection of similarities satisfying the open set condition and $\theta:\mathbb{N}\to\mathbb{R}_{\geq 0}$ is a decreasing function that satisfies \eqref{Divergence condition}, then
  %$$\mathcal{H}^{\dim_{H}(X)/t}(W((Diam(X_{I})\theta(|I|))^{t},z))=\mathcal{H}^{\dim_{H}(X)/t}(X)$$ for all $z\in X$ and $t\geq 1$.
\end{enumerate}
\end{thm}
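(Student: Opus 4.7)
\textbf{Statement 1.} The argument parallels statement 1 of Theorem \ref{conformal theorem}. Fix $\rho>0$; since $\sum_{n=1}^{\infty}\sum_{I\in \D^{n}}\Psi(I)^{s}<\infty$, the tails tend to zero, so after discarding a finite set of words (which does not affect the limsup $W(\Psi,z)$) we may choose $N\in\mathbb{N}$ large enough that $2\Psi(I)<\rho$ for all $|I|\geq N$. Then $\{B(\phi_{I}(z),\Psi(I))\}_{|I|\geq N}$ is a $\rho$-cover of $W(\Psi,z)$, yielding
\[
\mathcal{H}^{s}_{\rho}(W(\Psi,z))\leq 2^{s}\sum_{n\geq N}\sum_{I\in \D^{n}}\Psi(I)^{s},
\]
which can be made arbitrarily small by enlarging $N$. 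Sending $\rho\to 0$ gives $\mathcal{H}^{s}(W(\Psi,z))=0$.

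\textbf{Statement 2.} My plan is to invoke the mass transference principle (MTP) of Beresnevich and Velani \cite{BerVel}. Recall that the MTP concerns a $\delta$-Ahlfors regular metric space $(\Omega,d)$ and a dimension function $f$ such that $r^{-\delta}f(r)$ is monotonic: given a sequence of balls $B_{i}=B(x_{i},r_{i})\subseteq \Omega$ with $r_{i}\to 0$, set $B_{i}^{f}:=B(x_{i},f(r_{i})^{1/\delta})$; if $\mathcal{H}^{\delta}(B\cap \limsup_{i} B_{i}^{f})=\mathcal{H}^{\delta}(B)$ for every ball $B$ in $\Omega$, then $\mathcal{H}^{f}(B\cap \limsup_{i} B_{i})=\mathcal{H}^{f}(B)$ for every ball $B$. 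In our situation I will take $\Omega:=X$, which is $\delta$-Ahlfors regular for $\delta:=\dim_{H}(X)$ under the strong separation condition; the dimension function $f(r):=r^{\delta/t}$; and balls $B_{I}:=B(\phi_{I}(z),(Diam(X_{I})\theta(|I|))^{t})$ indexed by an enumeration of $\D^{*}$. With these choices one computes $B_{I}^{f}=B(\phi_{I}(z),Diam(X_{I})\theta(|I|))$, so statement $2$ of Theorem \ref{conformal theorem} tells us that $\mathcal{H}^{\delta}(X\setminus \limsup_{I} B_{I}^{f})=0$. Applying the MTP (once its hypotheses are checked) then yields
\[
\mathcal{H}^{\delta/t}(X\cap \limsup_{I} B_{I})=\mathcal{H}^{\delta/t}(X),
\]
and since $W((Diam(X_{I})\theta(|I|))^{t},z)=X\cap \limsup_{I} B_{I}$, this is exactly the conclusion of statement 2.

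\textbf{Main obstacle.} I do not anticipate any serious difficulty here; the argument is essentially an exercise in matching notation between our setting and the MTP. The required verifications are as follows. First, the radii $(Diam(X_{I})\theta(|I|))^{t}$ tend to $0$ as $|I|\to\infty$, since $Diam(X_{I})$ decays exponentially (by \eqref{Measure and diameter} and \eqref{measure decay}) while $\theta$ is bounded above by the divergence hypothesis on a monotonic sequence. Second, monotonicity of $r^{-\delta}f(r)=r^{\delta(1/t-1)}$ is immediate from $t\geq 1$ (constant when $t=1$, strictly decreasing when $t>1$). Third, the global statement $\mathcal{H}^{\delta}(X\setminus \limsup_{I}B_{I}^{f})=0$ must be promoted to the local version $\mathcal{H}^{\delta}(B\cap \limsup_{I}B_{I}^{f})=\mathcal{H}^{\delta}(B)$ on every ball $B\subseteq X$; this is a standard consequence of Ahlfors regularity, which guarantees that every ball in $X$ carries positive $\mathcal{H}^{\delta}$-measure and hence inherits the full-measure conclusion.
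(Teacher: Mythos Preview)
Your proposal is correct and follows essentially the same route as the paper: statement~1 is the standard $\rho$-cover/Borel--Cantelli argument (which the paper omits as analogous to statement~1 of Theorem~\ref{conformal theorem}), and statement~2 is deduced from statement~2 of Theorem~\ref{conformal theorem} via the mass transference principle with the same choice of balls $B_{I}=B(\phi_{I}(z),(Diam(X_{I})\theta(|I|))^{t})$ and exponent $s=\dim_{H}(X)/t$. Your additional verifications (radii $\to 0$, monotonicity of $r^{-\delta}f(r)$, and the promotion of the global full-measure statement to the local one on every ball) are exactly the routine checks the paper leaves implicit.
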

%When $f(r)=r^{s}$ for some $s\geq 0$ we call the call the associate Hausdorff $f$-measure the $s$-dimensional Hausdorff measure and we denote it by $\mathcal{H}^{s}.$ It is straightforward to show that for any $E\subset \R^{d}$ the following inequality holds $$\inf\{s:\mathcal{H}^{s}(E)=0\}=\sup\{s:\mathcal{H}^{s}(E)=\infty\}.$$ We call this coinciding value the Hausdorff dimension of $E$ and denote it by $\dim_{H}(E)$.

%We will in fact prove a stronger statement then Theorem \ref{conformal theorem}. Before we state this theorem it is useful to introduce some notation. Given $s'\geq 1$ and $\Psi:\N\to\R_{\geq 0}$ let $$W(\Psi,z,s'):=:=\Big\{x\in X: x\in B(\phi_{I}(z), (Diam(X_{I})\Psi(n))^{s'}) \textrm{ for i.m. }I\in\D^{*}\Big\}$$ The following theorem implies Theorem \ref{conformal theorem}

Taking $t=1$ in the above we observe that statement $2$ from Theorem \ref{conformal theorem} is a consequence of Theorem \ref{Hausdorff theorem}. It is in fact the case that the opposite implication is true. Theorem \ref{conformal theorem} implies Theorem \ref{Hausdorff theorem}. This is because of the mass transference principle. We now briefly detail this technique.

Let $X\subset \R^{d}$ and assume that $X$ is Ahlfors regular. Given $s>0$ and a ball $B=B(x,r),$ we let $B^s=B(x,r^{s/\dim_{H}(X)}).$ The following theorem is a simplified version of Theorem $3$ proved in \cite{BerVel}. It will allow us to prove that Theorem \ref{conformal theorem} implies Theorem \ref{Hausdorff theorem}.

\begin{thm}
\label{Mass Transference theorem}
Let $X$ be as above and $(B_{l})_{l=1}^{\infty}$ be a sequence of balls in $X$ with radii tending to zero. Let $s>0$ and suppose that for any ball $B$ in $X$ we have $$\mathcal{H}^{\dim_{H}(X)}(B\cap \limsup_{l \to \infty}B_l^s)=\mathcal{H}^{\dim_{H}(X)}(B).$$ Then, for any ball $B$ in $X$ $$\mathcal{H}^{s}(B\cap \limsup_{l \to \infty}B_l)=\mathcal{H}^{s}(B).$$
\end{thm}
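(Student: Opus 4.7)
The plan is to follow the Cantor-set and mass-distribution strategy pioneered by Beresnevich and Velani. Write $\delta=\dim_H(X)$ and first dispose of the trivial ranges of $s$. When $s=\delta$ we have $B_l^s=B_l$ and the conclusion is identical to the hypothesis, so there is nothing to do; when $s>\delta$ we have $\mathcal{H}^s(X)=0$, so both sides of the target equality vanish. Hence I shall assume $0<s<\delta$, in which case $r_l^{s/\delta}>r_l$ for $r_l$ small, so the balls $B_l^s$ from the hypothesis are strictly larger than the original balls $B_l$. Since $X$ is Ahlfors $\delta$-regular, every ball has infinite $s$-Hausdorff measure, so proving the conclusion reduces to proving a uniform lower bound $\mathcal{H}^s(B\cap \limsup_l B_l)\geq c\,r_B^s$ for every ball $B\subset X$ of radius $r_B$, with $c>0$ independent of $B$; the full equality then follows by a standard density/Vitali covering argument.

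Fix such a ball $B=B(x_0,r_0)$. I would construct a nested sequence of finite collections of balls $\mathcal{K}_0=\{B\}\supset \mathcal{K}_1\supset\cdots$ in $B$, together with a probability measure $\mu$ supported on the Cantor-like limit set $\mathcal{K}_\infty=\bigcap_n\bigcup \mathcal{K}_n$, arranged so that $\mathcal{K}_\infty\subset B\cap\limsup_l B_l$. The inductive step is the heart of the argument: given a parent ball $B'$ of radius $\rho$ in $\mathcal{K}_n$, I would apply the hypothesis inside $B'$ to see that $\limsup_l B_l^s$ has full $\mathcal{H}^\delta$-measure in $B'$, then select finitely many $B_l^s\subset B'$ whose inflated radii $\rho_l=r_l^{s/\delta}$ are below a rapidly shrinking threshold $\eta_n$, and which, after a Vitali-type thinning, are pairwise disjoint and together carry a fixed positive proportion of $\mathcal{H}^\delta(B')$. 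The children of $B'$ in $\mathcal{K}_{n+1}$ are declared to be the smaller original balls $B_l$ corresponding to these selected $B_l^s$; the shrinking threshold $\eta_n\to 0$ guarantees $\mathcal{K}_\infty\subset\limsup_l B_l$. Define $\mu$ recursively by splitting the $\mu$-mass of $B'$ among its children in proportion to $r_l^s$ (equivalently, to $\rho_l^\delta$); Ahlfors regularity makes the total mass at each stage comparable to $r_0^s$.

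The main obstacle is verifying the uniform mass-distribution estimate $\mu(U)\leq C\,\mathrm{diam}(U)^s$ for every Borel set $U$, as this is what the mass distribution principle requires. The argument splits into a two-scale analysis: when $\mathrm{diam}(U)$ lies between two consecutive generation scales, one passes to the smallest containing ancestor and bounds $\mu(U)$ using the recursive definition of $\mu$; when $\mathrm{diam}(U)$ is comparable to a generation-$n$ scale, one bounds the number of generation-$n$ children $B_l$ that $U$ can meet by counting their centres in a suitable neighbourhood, using the disjointness of the inflated siblings $B_l^s$ together with Ahlfors $\delta$-regularity to convert the inflated radii $r_l^{s/\delta}$ back into the correct $s$-power scaling. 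Once this estimate is in hand, the mass distribution principle delivers $\mathcal{H}^s(\mathcal{K}_\infty)\geq \mu(\mathcal{K}_\infty)/C\gtrsim r_0^s$, yielding the desired lower bound and thereby the theorem.
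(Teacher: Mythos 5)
The paper does not prove this theorem; it is cited directly, with the remark that it is a simplified version of a result proved in \cite{BerVel}. Your proposal therefore undertakes the much more ambitious task of reconstructing the Beresnevich--Velani proof from scratch, which is a different exercise from what the paper does.

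Your framework is the right one: reduce the claim to a uniform local lower bound $\mathcal{H}^s(B\cap\limsup_l B_l)\geq c\,r_B^s$ (which, combined with Ahlfors regularity and $s<\delta:=\dim_H(X)$, forces $\mathcal{H}^s(B\cap\limsup_l B_l)=\infty=\mathcal{H}^s(B)$), and then build a nested Cantor set inside $B\cap\limsup_l B_l$ supporting a mass-distribution measure. The handling of the ranges $s\geq\delta$ is fine. However, the measure you propose---split the mass of a parent $B'$ of radius $\rho$ among its children $B_l$ in proportion to $r_l^s$---does not yield the required mass distribution estimate. Since the inflated balls $B_l^s$ are disjoint in $B'$ and carry a positive proportion of $\mathcal{H}^\delta(B')\asymp\rho^\delta$, one has $\sum_j r_j^s=\sum_j\rho_j^\delta\asymp\rho^\delta$, whence $\mu(B_l)\asymp\mu(B')\,r_l^s/\rho^\delta$, so
$$\frac{\mu(B_l)}{r_l^s}\asymp\frac{\mu(B')}{r_{B'}^s}\cdot r_{B'}^{\,s-\delta}.$$
Because $0<r_{B'}<1$ and $s<\delta$, the factor $r_{B'}^{\,s-\delta}$ is at least $1$, and tends to infinity as the construction deepens. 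Thus the ratio $\mu(L)/r_L^s$ grows without bound along every branch of the Cantor tree, and the estimate $\mu(U)\leq C\,\mathrm{diam}(U)^s$ cannot hold with a uniform constant: taking $U$ to be a small ball concentric with a deep Cantor ball $L$ already defeats it. Your two-scale analysis implicitly requires $\mu(L)\leq C\,r_L^s$ at every generation as its base case, and that is precisely what fails. This is the genuine technical crux of the Beresnevich--Velani theorem; their proof introduces an additional intermediate ``local ball'' structure at each generation and runs a considerably more delicate counting argument, and the resulting measure is not a naive proportional split. Your sketch correctly identifies the tools (Cantor construction, mass distribution principle, disjointness of the inflated siblings) but does not actually cross this central obstacle, so as written it is not a proof.
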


We now prove Theorem \ref{Hausdorff theorem} using Theorem \ref{Mass Transference theorem}.

\begin{proof}[Proof of Theorem \ref{Hausdorff theorem}]

The proof of statement $1$ is analogous to the proof of statement $1$ from Theorem \ref{conformal theorem} so we omit it.

Suppose $\Phi$ is a conformal iterated function system satisfying the open set condition. Assume $\theta:\mathbb{N}\to\R_{\geq 0}$ is a decreasing function satisfying \eqref{Divergence condition}. Let $z\in X$, $t\geq 1,$ and $(B_{l})_{l=1}^{\infty}$ be an enumeration of the set of balls $\{B(\phi_{I}(z),(Diam(X_I)\theta(|I|))^t)\}_{I\in D^{*}}.$ It follows from the definition that $(B^{\dim_{H}(X)/t}_{l})_{l=1}^{\infty}$ is an enumeration of the set of balls $\{B(\phi_{I}(z),Diam(X_I)\theta(|I|))\}_{I\in \D^{*}}.$ In which case, by Theorem \ref{conformal theorem} we know that $$\mathcal{H}^{\dim_{H}(X)}(B\cap \limsup_{l \to \infty}B_l^{\dim_{H}(X)/t})=\mathcal{H}^{\dim_{H}(X)}(B)$$ for any ball $B$. Applying Theorem \ref{Mass Transference theorem} we may conclude that $$\mathcal{H}^{\dim_{H}/t}(B\cap \limsup_{l \to \infty}B_l)=\mathcal{H}^{\dim_{H}(X)/t}(B),$$ for any ball $B$, and $$\mathcal{H}^{\dim_{H}(X)/t}(W((Diam(X_I)\theta(|I|))^t,z))=\mathcal{H}^{\dim_{H}(X)/t}(X).$$
\end{proof}
Statement $2$ from Theorem \ref{Hausdorff theorem} implies a lower bound for the Hausdorff dimension of certain limsup sets. It is a consequence of statement $1$ that this bound is in fact optimal and implies the following result.
\begin{cor}
\label{dimension equality}

If $\Phi$ is a conformal iterated function system satisfying the open set condition and $\theta:\mathbb{N}\to\mathbb{R}_{\geq 0}$ is a decreasing function that satisfies \eqref{Divergence condition},
  %$$\sum_{n=1}^{\infty} \sum_{I\in\{1,\ldots,m\}^{n}} Diam(X_I)\Psi(n)=\infty$$
  then $$\dim_{H}(W((Diam(X_{I})\theta(|I|))^{t},z))=\frac{\dim_{H}(X)}{t}$$ for all $z\in X$ and $t\geq 1$
%\item If $\Phi$ is a conformal iterated function system satisfying the strong separation condition and $\theta:\mathbb{N}\to\mathbb{R}_{\geq 0}$ is a decreasing function that satisfies \eqref{Divergence condition},
  %$$\sum_{n=1}^{\infty} \sum_{I\in\{1,\ldots,m\}^{n}} Diam(X_I)\Psi(n)=\infty$$
 % then $$\dim_{H}(W((Diam(X_{I})\theta(|I|))^{t},z))=\frac{\dim_{H}(X)}{t}$$ for all $z\in X$ and $t\geq 1$.
  %\item If $\Phi$ is a collection of similarities satisfying the open set condition and $\theta:\mathbb{N}\to\mathbb{R}_{\geq 0}$ is a decreasing function that satisfies \eqref{Divergence condition}, then
  %$$\dim_{H}(W((Diam(X_{I})\theta(|I|))^{t},z))=\frac{\dim_{H}(X)}{t}$$ for all $z\in X$ and $t\geq 1$.
\end{cor}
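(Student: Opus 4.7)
The plan is to obtain the equality by proving matching upper and lower bounds, both of which should follow essentially directly from the two statements of Theorem \ref{Hausdorff theorem} (which has just been proved). No new analytic machinery should be required.

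For the \emph{lower bound} $\dim_{H}(W)\geq \dim_{H}(X)/t$, I would invoke statement $2$ of Theorem \ref{Hausdorff theorem} with the given $\theta$ and exponent $t$. That statement gives
\[
\mathcal{H}^{\dim_{H}(X)/t}\bigl(W((Diam(X_{I})\theta(|I|))^{t},z)\bigr)=\mathcal{H}^{\dim_{H}(X)/t}(X).
\]
Since $X$ is Ahlfors regular of dimension $\dim_{H}(X)$, the right-hand side is positive (indeed infinite when $t>1$, and positive and finite when $t=1$). A set of positive $s$-dimensional Hausdorff measure has Hausdorff dimension at least $s$, so this immediately yields $\dim_{H}(W)\geq \dim_{H}(X)/t$.

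For the \emph{upper bound} $\dim_{H}(W)\leq \dim_{H}(X)/t$, I would fix an arbitrary $s>\dim_{H}(X)/t$ and apply statement $1$ of Theorem \ref{Hausdorff theorem} to $\Psi(I):=(Diam(X_{I})\theta(|I|))^{t}$, which requires checking that
\[
\sum_{n=1}^{\infty}\sum_{I\in\D^{n}}(Diam(X_{I})\theta(n))^{ts}<\infty.
\]
Writing $\sigma:=ts>\dim_{H}(X)$, I would factor out $\theta(n)^{\sigma}$ and estimate the inner sum via
\[
\sum_{I\in\D^{n}}Diam(X_{I})^{\sigma}\leq \Bigl(\max_{I\in\D^{n}}Diam(X_{I})\Bigr)^{\sigma-\dim_{H}(X)}\sum_{I\in\D^{n}}Diam(X_{I})^{\dim_{H}(X)}.
\]
The first factor decays like $\gamma^{n(\sigma-\dim_{H}(X))}$ for some $\gamma\in(0,1)$ (by the uniform contraction of the conformal maps, cf. \eqref{measure decay} combined with \eqref{Measure and diameter}), while the second factor is $\asymp 1$ under the strong separation condition (this is the remark following Theorem \ref{conformal theorem}, equivalently a consequence of Bowen's equation $P(\dim_{H}(X))=0$). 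Using that $\theta$ is decreasing and hence bounded by $\theta(1)$, the sum is dominated by a convergent geometric series. By statement $1$ of Theorem \ref{Hausdorff theorem} this forces $\mathcal{H}^{s}(W)=0$, so $\dim_{H}(W)\leq s$, and letting $s\searrow \dim_{H}(X)/t$ gives the desired bound.

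The main conceptual point, rather than a technical obstacle, is to recognise that Theorem \ref{Hausdorff theorem} already packages both the positive-measure statement (supplying the lower bound) and the convergence statement (supplying the upper bound); the only substantive calculation is the geometric-series estimate for the upper bound, and the only subtlety is ensuring the two separate summation indices (the $n$-sum and the $I$-sum) combine correctly, which is handled by using the pressure-type identity $\sum_{I\in\D^{n}}Diam(X_{I})^{\dim_{H}(X)}\asymp 1$ valid under strong separation.
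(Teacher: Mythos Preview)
Your proposal is correct and follows precisely the route the paper indicates: the paper states just before the corollary that statement $2$ of Theorem \ref{Hausdorff theorem} gives the lower bound and that statement $1$ shows this bound is optimal, then explicitly leaves the details to the reader. Your argument fills in exactly those details, and the geometric-series estimate for the upper bound (using $\sum_{I\in\D^{n}}Diam(X_{I})^{\dim_{H}(X)}\asymp 1$ together with the uniform contraction bound $Diam(X_{I})\leq C\gamma^{|I|}$) is the natural way to verify the convergence hypothesis of statement $1$.
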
We leave the proof of Corollary \ref{dimension equality} to the interested reader.
\begin{remark}
One would like to improve Theorem \ref{Hausdorff theorem} to a more general statement that covered arbitrary dimension functions. Such a result appeared in \cite{LSV}. However, since the radii of our balls take the restrictive form $Diam(X_{I})\theta(|I|)$, we cannot prove such a general statement. In the more general case, the analogue of the normalised ball $B^s$ does not take a form we can work with.

%That being said, when $\Phi$ consists solely of similarities all with the same contraction ratio, then we can prove a stronger version of Theorem \ref{Hausdorff theorem} that covers more general dimension functions.
\end{remark}

\subsection{Applications}
We now include some applications of Theorem \ref{Hausdorff theorem}. Before giving these applications it is useful to build some general theory.

Let $E$ be some subset of $\mathbb{R}^{d}$. We call a function $H:E\to\mathbb{R}_{>0}$ a \textit{height function} on $E$. We say that an IFS $\Phi$ \emph{respects $E$ and $H$ }if the following two conditions hold:
\begin{itemize}
\item $\phi(x)\in E$ for all $x\in E$ and $\phi\in \Phi$.
\item There exists $C>1$ such that $H(\phi(x))<CH(x)$ for all $x\in E$ and $\phi\in \Phi$.
\end{itemize}

\begin{prop}
\label{height prop}
Let $\Phi$ be a conformal iterated function system satisfying the open set condition and $H$ be a height function on a set $E$. Suppose $E\cap X\neq \emptyset$ and $\Phi$ respects $E$ and $H,$ then for any $l>0$ we have
    $$\dim_{H}(X)\Big(\Big\{x\in X: |x-e|<H(e)^{-l} \textrm{ for i.m. } e\in E\Big\}\Big)>0.$$
%\begin{enumerate}
%\item Let $\Phi$ be a conformal iterated function system satisfying the strong separation condition and $H$ be a height function on a set $E$. Suppose $E\cap X\neq \emptyset$ and $\Phi$ respects $E$ and $H,$ then for any $l>0$ we have
 %   $$\dim_{H}(X)\Big(\Big\{x\in X: |x-e|<H(e)^{-l} \textrm{ for i.m. } e\in E\Big\}\Big)>0.$$
%\item Let $\Phi$ be a collection of similarities satisfying the open set condition and $H$ be a height function on a set $E$. Suppose $E\cap X\neq \emptyset$ and $\Phi$ respects $E$ and $H,$ then for any $l>0$ we have
  %  $$\dim_{H}(X)\Big(\Big\{x\in X: |x-e|<H(e)^{-l} \textrm{ for i.m. } e\in E\Big\}\Big)>0.$$
%\end{enumerate}
\end{prop}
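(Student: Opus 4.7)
The strategy is to manufacture, from a single seed $z_0 \in E \cap X$, a countable family of well-approximating points in $E$ by iterating the IFS, and then to apply Theorem \ref{Hausdorff theorem} to a suitable power of the cylinder diameters. The key observation is that the height function grows only exponentially in word length, while cylinder diameters shrink exponentially, so raising the diameters to a large enough power makes the IFS balls fit inside the height balls.

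Fix any $z_0 \in E \cap X$. Because $\Phi$ respects both $E$ and $H$, induction on word length gives $\phi_I(z_0) \in E$ and $H(\phi_I(z_0)) \leq C^{|I|} H(z_0)$ for every $I \in \D^*$. Since $\Phi$ is a conformal IFS on the compact set $Y$, there exist $R \in (0,1)$ and $c_0 > 0$ with $Diam(X_I) \leq c_0 R^{|I|}$ for all $I$. Choose $t \geq 1$ large enough that $R^t C^l < 1$. Then a direct comparison shows that for all $I$ of sufficient length,
\[
(Diam(X_I))^t \leq H(\phi_I(z_0))^{-l},
\]
and hence $B(\phi_I(z_0), (Diam(X_I))^t) \subseteq B(\phi_I(z_0), H(\phi_I(z_0))^{-l})$.

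Now apply Theorem \ref{Hausdorff theorem}(2) with the trivially decreasing choice $\theta \equiv 1$: the divergence condition \eqref{Divergence condition} reduces, by the remark following Theorem \ref{conformal theorem}, to $\sum_n \sum_{I \in \D^n} Diam(X_I)^{\dim_H(X)} \asymp \sum_n 1 = \infty$. This yields
\[
\mathcal{H}^{\dim_H(X)/t}\bigl(W((Diam(X_I))^t, z_0)\bigr) = \mathcal{H}^{\dim_H(X)/t}(X) > 0,
\]
and in particular $\dim_H\bigl(W((Diam(X_I))^t, z_0)\bigr) \geq \dim_H(X)/t$.

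To conclude, set $W := \{x \in X : |x - e| < H(e)^{-l} \text{ for i.m. } e \in E\}$. For any $x \in W((Diam(X_I))^t, z_0)$ that does not lie in the countable set $\{\phi_I(z_0) : I \in \D^*\}$, the infinitely many indices $I$ witnessing approximation of $x$ must deliver infinitely many distinct points $\phi_I(z_0) \in E$ (otherwise, since the radii $(Diam(X_I))^t$ tend to zero, $x$ would equal one of these images). By the ball inclusion of the previous step, each such sufficiently long $I$ places $x$ in $B(\phi_I(z_0), H(\phi_I(z_0))^{-l})$, so $x \in W$. Discarding a countable set leaves Hausdorff dimension unchanged, so $\dim_H(W) \geq \dim_H(X)/t > 0$. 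The only subtlety is this last bookkeeping step: different words $I$ might in principle produce the same image (if $z_0$ happens to be a periodic point of some $\phi_K$), but removing the countable IFS orbit of $z_0$ dispatches this harmless possibility.
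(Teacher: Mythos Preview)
Your proof is correct and follows essentially the same route as the paper: pick a seed $z_0\in E\cap X$, exploit the exponential growth of $H$ against the exponential decay of cylinder diameters to choose $t\geq 1$ with $(Diam(X_I))^t\leq H(\phi_I(z_0))^{-l}$ for long $I$, and invoke the Hausdorff measure statement (the paper cites Corollary~\ref{dimension equality}, you cite Theorem~\ref{Hausdorff theorem} directly, which is equivalent here). Your final bookkeeping step---removing the countable orbit of $z_0$ to ensure infinitely many \emph{distinct} approximants $e\in E$---is in fact a point the paper's proof glosses over, so your version is slightly more careful.
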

\begin{proof}
Fix $\Phi$, $H,$ $E$ and $l$. Let $a\in E\cap X.$ We will prove that
\begin{equation}
\label{t inclusion}
W(Diam(X_I)^{t},a)\subseteq \Big\{x\in X: |x-e|<H(e)^{-l} \textrm{ for i.m. } e\in E\Big\}
\end{equation}for some suitable choice of $t\geq 1$. Our result will then follow from Corollary \ref{dimension equality}. Since each element of $\Phi$ is a contraction there exists $\gamma\in(0,1)$ such that
\begin{equation}
\label{Diameter decaying}
Diam(X_I)<\gamma^{|I|}Diam(X)
 \end{equation}for all $I\in\D^{*}$. Since $\Phi$ respects $E$ and $H$ we have
\begin{equation}
\label{height growing}
 H(\phi_{I}(a))\leq C^{|I|}H(a)
 \end{equation} for all $I\in \D^{*}$. Let us now choose $t$ sufficiently large that
\begin{equation}
\label{n large}
(\gamma^{n}Diam(X))^t<\Big(\frac{1}{C^{n}H(a)}\Big)^l,
\end{equation}for all $n$ sufficiently large. As a consequence of \eqref{Diameter decaying}, \eqref{height growing} and \eqref{n large} the following inclusions hold for $|I|$ sufficiently large
\begin{equation*}
B(\phi_{I}(a),Diam(X_I)^{t})\subseteq B(\phi_{I}(a),(\gamma^{|I|}Diam(X))^t) \subseteq B(\phi_{I}(a),(C^{|I|}H(a))^{-l})\subseteq B(\phi_{I}(a),H(\phi_{I}(a))^{-l}).
\end{equation*} These inclusions along with the fact $\phi_{I}(a)\in E$ for all $I\in \D^{*}$ imply \eqref{t inclusion}.
\end{proof}

Proposition \ref{height prop} allows us to deduce the existence of elements within our attractor $X$ that are ``very well approximable," where the definition of ``very well approximable" may be defined in a way that is independent from our attractor $X$. In the next section we give two applications which exhibit this phenomenon.

Note that in the proof of Proposition \ref{height prop} we didn't require the full strength of Corollary \ref{dimension equality}. One can prove this result by combining elementary arguments with the mass transference principle. Therefore, Proposition \ref{height prop} and the applications below should be interpreted as consequence of the mass transference principle within the setting of IFS's, rather than as a consequence of approximation regularity.

\subsubsection{A problem of Mahler}
A classical theorem due to Dirichlet states that for any $\alpha=(\alpha_1,\ldots,\alpha_d)\in \R^{d}$ and $Q\in \N,$ there exists $(p_1,\ldots,p_d)\in \mathbb{Z}^d$ and $1\leq q<Q^d$ such that $$\max_{1\leq i \leq d}|q\alpha_i-p_i|\leq \frac{1}{Q}.$$ This theorem implies that for any $(\alpha_1,\ldots,\alpha_d)\in \R^{d}$ with at least one $\alpha_i$ irrational, there exists infinitely many $(p_{1}/q,\ldots,p_{d}/q)\in \mathbb{Q}^d$ such that $$\max_{1\leq i \leq d}\Big|\alpha_i-\frac{p_i}{q}\Big|\leq \frac{1}{q^{1+1/d}}.$$ We call $\alpha\in \R^{d}$ \emph{very well approximable} if there exists $\tau>1+1/d$ for which there exists infinitely many $(p_{1}/q,\ldots,p_{d}/q)\in \mathbb{Q}^d$ satisfying
\begin{equation}
\label{well approximable}
\max_{1\leq i \leq d}\Big|\alpha_i-\frac{p_i}{q}\Big|\leq \frac{1}{q^{\tau}}.
\end{equation} We call $\alpha$ \emph{Liouville} if it satisfies \eqref{well approximable} for all $\tau>1+1/d$. It is well known that the set of very well approximable numbers has Lebesgue measure zero, and the set of Liouville numbers has Hausdorff dimension zero. In \cite{Bug} the following assertion was made and attributed to Mahler.
\\

\noindent \emph{There exists very well approximable numbers, other than Liouville numbers, in the middle third Cantor set.}
\\

This assertion was proved to be correct in \cite{LSV}. Related work appears in \cite{FishSim}. Applying Proposition \ref{height prop} we now obtain a more general version of this result. We call $\Phi$ a \emph{rational iteration function system} if each element of $\Phi$ is a similarity of the form $\phi_{i}(x)=\frac{p_i x}{q_i}+(\frac{a_{i,1}}{b_{i,1}},\ldots,\frac{a_{i,d}}{b_{i,d}})$ where $p_i,q_i \in\mathbb{Z}$ and $a_{i,1},b_{i,1},\ldots,a_{i,d},b_{i,d}\in \mathbb{Z}$.
\begin{thm}
\label{Mahler}
Let $\Phi$ be a rational iterated function system satisfying the open set condition, then $X$ contains very well approximable numbers that are not Liouville.
\end{thm}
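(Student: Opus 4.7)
The plan is to apply Proposition \ref{height prop} with $E = \mathbb{Q}^d$ and $H$ the least common denominator, obtain a set of positive $\dim_H(X)$-dimensional Hausdorff measure consisting of very well approximable points, and then remove the Liouville vectors, which form a set of Hausdorff dimension $0$.

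First I would set $E = \mathbb{Q}^d$ and, for $x \in E$, define $H(x)$ to be the smallest positive integer $q$ with $qx \in \mathbb{Z}^d$. Each map $\phi_i(x) = (p_i/q_i)x + v_i$ with $v_i \in \mathbb{Q}^d$ clearly sends $\mathbb{Q}^d$ to itself, and if $B_i$ is a common denominator of the coordinates of $v_i$ then $q_i B_i \phi_i(x) \in \mathbb{Z}^d$ whenever $H(x)\,x \in \mathbb{Z}^d$, giving $H(\phi_i(x)) \le q_i B_i H(x)$. Taking $C = \max_i q_i B_i$ shows that $\Phi$ respects $(E, H)$. The unique fixed point of any single $\phi_i$ is a rational vector (it solves a linear equation over $\mathbb{Q}$) and lies in $X$, so $E \cap X \neq \emptyset$. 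By the convention recorded in Section 1.2, Proposition \ref{height prop} applies to rational similarities satisfying the open set condition.

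Fix any $l > 1 + 1/d$ and apply Proposition \ref{height prop} to conclude that
$$V_l := \Big\{x \in X : |x - e| < H(e)^{-l} \textrm{ for i.m. } e \in E\Big\}$$
has positive $\dim_H(X)$-dimensional Hausdorff measure. Each $x \in V_l$ is very well approximable: writing any witnessing $e$ as $(p_1/q, \ldots, p_d/q)$ with $q = H(e)$, one has $\max_i |x_i - p_i/q| \le |x - e| < q^{-l}$ for infinitely many such pairs, and $l > 1 + 1/d$. Assuming $|\Phi| \geq 2$ (otherwise the statement is vacuous), we have $\dim_H(X) > 0$, so $\mathcal{H}^{\dim_H(X)}(V_l) > 0$.

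It remains to exclude the Liouville set $L \subset \mathbb{R}^d$. By the Jarnik--Besicovitch theorem in $\mathbb{R}^d$, the set of vectors approximable by rationals with exponent at least $\kappa$ has Hausdorff dimension $(d+1)/\kappa$; intersecting over $\kappa \to \infty$ gives $\dim_H(L) = 0$. In particular $\mathcal{H}^{\dim_H(X)}(L) = 0$, so $V_l \setminus L$ is non-empty and any element of it is very well approximable but not Liouville. The main obstacle is essentially bookkeeping: verifying that $\Phi$ respects $(E, H)$ is routine once $H$ is fixed, and the vanishing of $\dim_H(L)$ is classical. The only slightly delicate point is confirming that Proposition \ref{height prop} is available under the open set condition rather than strong separation, which is covered by the paper's blanket convention for similarities.
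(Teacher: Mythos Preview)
Your approach is essentially the paper's: take $E=\mathbb{Q}^d$ with the least-common-denominator height, check that fixed points give $E\cap X\neq\emptyset$, verify $H(\phi_i(x))\le C\,H(x)$, invoke Proposition~\ref{height prop} (available for similarities under the open set condition by the paper's convention), and then discard the zero-dimensional Liouville set.

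One correction: Proposition~\ref{height prop} yields that $V_l$ has positive Hausdorff \emph{dimension}, not positive $\dim_H(X)$-dimensional Hausdorff \emph{measure}. Indeed, the witness set produced in its proof is $W\big((Diam(X_I))^t,a\big)$ for some $t\ge 1$, which by Corollary~\ref{dimension equality} has dimension $\dim_H(X)/t$ and, for $t>1$, has $\mathcal{H}^{\dim_H(X)}$-measure zero by statement~1 of Theorem~\ref{conformal theorem}. So the line ``$\mathcal{H}^{\dim_H(X)}(V_l)>0$'' is not what the proposition gives you. This does no real damage: $\dim_H(V_l)>0=\dim_H(L)$ already forces $V_l\setminus L\neq\emptyset$, which is exactly how the paper concludes.
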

\begin{proof}
By Proposition \ref{height prop} and the fact that the Liouville numbers have Hausdorff dimension zero, it suffices to show that $X\cap\mathbb{Q}^d\neq\emptyset$ and $\Phi$ respects $\mathbb{Q}^d$ and $H$. Where $H:\mathbb{Q}^d\to\mathbb{N}$ is the height function defined to be $H((\frac{p_{1}}{q_{1}},\ldots, \frac{p_d}{q_d}))=\textrm{lcm}(q_{1},\ldots,q_{d}),$ where in this expression each $p_i/q_i$ is assumed to be in its reduced form and $\textrm{lcm}$ denotes the lowest common multiple.

To see that $X\cap \mathbb{Q}^d\neq\emptyset,$ we remark that for any $\phi_{i}\in \Phi$ the unique fixed point satisfying $\phi_{i}(x)=x$ is contained in $X$. Proving that this fixed point is contained in $\mathbb{Q}^{d}$ follows immediately from the definition of a rational iterated function system.

We now show that $\Phi$ respects $\mathbb{Q}^d$ and $H.$ Clearly any element of $\Phi$ maps $\mathbb{Q}^d$ to $\mathbb{Q}^d$. It remains to show that we satisfy the required growth condition. Fix $\overline{x}=(\frac{x_{1}}{y_{1}},\ldots, \frac{x_d}{y_{d}})\in\mathbb{Q}^{d}.$ Let $y=\textrm{lcm}(y_1,\ldots,y_m)$, by an abuse of notation we write $\overline{x}=(\frac{x_{1}}{y},\ldots, \frac{x_d}{y}).$ We observe the following
\begin{align}
\phi_i(\overline{x})&=\Big(\frac{p_{i}x_{1}}{q_{i}y},\ldots, \frac{p_{i}x_d}{q_{i}y}\Big)+\Big(\frac{a_{i,1}}{b_{i,1}},\ldots,\frac{a_{i,d}}{b_{i,d}}\Big)\nonumber\\
&= \Big(\frac{b_{i,1}p_{i}x_{1}+a_{i,1}q_{i}y}{b_{i,1}q_{i}y},\ldots, \frac{b_{i,d}p_{i}x_{d}+a_{i,d}q_{i}y}{b_{i,d}q_{i}y}\Big)\label{lcm bound}.
\end{align}Each term in \eqref{lcm bound} can be rewritten to have denominator $q_{i}y\cdot \prod_{j=1}^{d} b_{i,j}.$ Therefore $$H(\phi_i(\overline{x}))\leq q_{i}y\cdot \prod_{j=1}^{d} b_{i,j}.$$ Taking $C=\sup_{i\in \D}q_{i}\cdot \prod_{j=1}^{d} b_{i,j}$ we have $$H(\phi_{i}(\overline{x}))\leq C H(\overline{x}),$$ for all $\overline{x}\in\mathbb{Q}^d$ and $\phi_i\in \Phi$. Thus $\Phi$ respects $\mathbb{Q}^d$ and $H$.
\end{proof}

\subsubsection{Approximating badly approximable numbers by quadratic irrationals}
For our next application we prove that there exist badly approximable numbers that are ``very well approximated" by quadratic irrationals. We start by recalling what it means to be badly approximable and give an overview of approximation by algebraic numbers.

We call $x\in (0,1)$ \emph{badly approximable} if there exists $\kappa(x)>0$ such that
$$\Big|x-\frac{p}{q}\Big|>\frac{\kappa(x)}{q^2} \textrm{ for all }(p,q)\in \mathbb{Z}\times \mathbb{N}.$$ Every $x\in(0,1)\setminus \mathbb{Q}$ has a unique continued fraction expansion, that is a unique sequence $(a_n(x))\in \mathbb{N}^{\mathbb{N}}$ such that
$$x=\cfrac{1}{a_1+\cfrac{1}{a_2+\cfrac{1}{\cdots+\cdots}}}:=\lim_{n\to\infty}\cfrac{1}{a_1+\cfrac{1}{\cdots+\cfrac{1}{a_n}}}.$$ We refer to the sequence $(a_n)$ as the partial quotients of $x$. Badly approximable numbers are characterised by their continued fraction expansion. It is well known that $x$ is badly approximable if and only if its sequence of partial quotients is bounded \cite{Bug}.

We now detail some of the background behind approximation by algebraic numbers. What follows is taken from \cite{Bug}. The height of an algebraic number $\alpha,$ denoted by $H(\alpha),$ is the maximum of the moduli of the coefficients of its minimal polynomial. For example, we have $H(\sqrt{2})=2$ since the minimal polynomial of $\sqrt{2}$ is $x^2-2$. The degree of an algebraic number $\alpha$, denoted by $\textrm{deg}(\alpha),$ is the degree of the minimal polynomial.

Given $x\in \mathbb{R}$ and $n\in\mathbb{N},$ we define $w_{n}(x)$ to be the supremum of the real numbers $\omega$ for which there exists i.m. real algebraic numbers $\alpha$ with $\textrm{deg}(\alpha)\leq n$ satisfying $$0<|x-\alpha|<H(\alpha)^{-\omega-1}.$$ We then let $$w(x):=\limsup_{n\to\infty}\frac{w_{n}(x)}{n}.$$ We will not use the quantity $w(x).$ We merely remark that it can be used to give a classification of the real numbers in terms of how transcendental they are. This classification is known as Koksma's classification \cite{Koksma}. The following result describes the generic behaviour of $w_{n}(x).$
\begin{thm}
\label{generic algebraic}
Lebesgue almost every $x\in \R$ satisfies $w_{n}(x)=n$ for each $n\in\mathbb{N}$.
\end{thm}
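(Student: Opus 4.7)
The plan is to establish the two one-sided bounds $w_n(x) \geq n$ and $w_n(x) \leq n$ separately for each fixed $n$, and then intersect the corresponding full-measure sets over $n \in \N$ to get the simultaneous statement. The lower bound will in fact hold for every real $x$ that is not itself algebraic of degree at most $n$; only the upper bound requires removing a Lebesgue null set.

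For the lower bound I would run a Dirichlet-style pigeonhole argument on integer polynomials. Fix $x$ in a bounded interval $J$ and a large integer $H$. The $(2H+1)^{n+1}$ integer polynomials $P(t) = a_0 + a_1 t + \cdots + a_n t^n$ with $\max_i |a_i| \leq H$ all take values $P(x)$ in an interval of length $\O_{J,n}(H)$. Pigeonholing into subintervals of length $H^{-n}$ yields a nonzero such $P$ with $H(P) \leq 2H$ and $|P(x)| \ll H^{-n}$. Let $\alpha$ be the root of $P$ closest to $x$; since $|P(x)|$ is small, $\alpha$ lies close to $x$ and in particular is bounded, so $|P'(\alpha)| \ll_{J,n} H$. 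The elementary inequality $|P(x)| \geq \tfrac{1}{2}|P'(\alpha)||x-\alpha|$, valid once $x$ is sufficiently close to a simple root $\alpha$, gives $|x - \alpha| \ll H^{-n-1}$. Passing to an irreducible factor of $P$ vanishing at $\alpha$ and invoking Gelfond's lemma ($H(\alpha) \ll_n H(P)$) converts this into $|x - \alpha| \ll H(\alpha)^{-n-1}$. Letting $H \to \infty$ produces infinitely many such algebraic $\alpha$ of degree at most $n$, hence $w_n(x) \geq n$.

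For the upper bound I would apply the Borel--Cantelli lemma. Fix $\omega > n$ and a bounded interval $J$. Grouping real algebraic $\alpha$ of degree at most $n$ by dyadic height $H(\alpha) \in [2^k, 2^{k+1})$, the cardinality of this set is $\O(2^{k(n+1)})$, since there are $\O(2^{k(n+1)})$ integer polynomials of degree at most $n$ with height in that range and each has at most $n$ real roots. The set of $x \in J$ satisfying $0 < |x - \alpha| < H(\alpha)^{-\omega-1}$ for some such $\alpha$ has Lebesgue measure at most $\O(2^{k(n+1)} \cdot 2^{-k(\omega+1)}) = \O(2^{k(n-\omega)})$. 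Summing over $k \geq 0$ gives a convergent geometric series, so by Borel--Cantelli almost every $x \in J$ lies in only finitely many such sets, whence $w_n(x) \leq \omega$ a.e. Letting $\omega$ decrease to $n$ along a countable sequence yields $w_n(x) \leq n$ a.e.

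The main obstacle is the lower bound, specifically the step from the polynomial approximation $|P(x)|$ small to the root approximation $|x - \alpha|$ small. One must ensure that $\alpha$ is a simple root of some irreducible factor of $P$, control the derivative $P'(\alpha)$ from above, and pass from the height of the (possibly reducible) polynomial $P$ to that of the algebraic number $\alpha$ via Gelfond's inequality. The Borel--Cantelli direction is by contrast routine once algebraic numbers of bounded degree are counted correctly by height.
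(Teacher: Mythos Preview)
The paper does not supply its own proof of this theorem; it simply refers to Bugeaud's book and attributes the result to Sprind\v{z}uk. So there is no ``paper's approach'' to compare against, but your argument can still be examined on its own merits.

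Your Borel--Cantelli argument for the upper bound $w_n(x)\le n$ a.e.\ is correct and standard.

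The lower bound argument, however, breaks down at exactly the step you flag as the main obstacle, and in a way that cannot be repaired by elementary means. You produce a nonzero integer polynomial $P$ of height $\ll H$ with $|P(x)|\ll H^{-n}$, take $\alpha$ the nearest root, and invoke
\[
|P(x)|\ \ge\ \tfrac12\,|P'(\alpha)|\,|x-\alpha|.
\]
This yields $|x-\alpha|\le 2|P(x)|/|P'(\alpha)|$, so to conclude $|x-\alpha|\ll H^{-n-1}$ you need a \emph{lower} bound $|P'(\alpha)|\gg H$. What you actually establish (and what you say you want in your final paragraph) is the \emph{upper} bound $|P'(\alpha)|\ll_{J,n} H$, which is useless in this inequality. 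With only an upper bound on the derivative, no nontrivial bound on $|x-\alpha|$ follows.

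This is not a minor technicality. Ruling out small $|P'(\alpha)|$, i.e.\ excluding the possibility that several roots of $P$ cluster near $x$, is precisely the substance of Sprind\v{z}uk's theorem and is not accessible via pigeonhole alone. Your Dirichlet argument genuinely proves the \emph{polynomial} inequality (that $|P(x)|<H(P)^{-n}$ has infinitely many solutions), but transferring this to the root-approximation quantity $w_n$ as defined in the paper is the deep step. In particular, your claim that the lower bound $w_n(x)\ge n$ holds for \emph{every} non-algebraic $x$ is false for the root version: Wirsing's theorem gives only $w_n(x)\ge (n+1)/2$ for all transcendental $x$, and this is essentially sharp without further hypotheses. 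The full statement $w_n(x)=n$ a.e.\ really does require Sprind\v{z}uk's work.
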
For a proof of Theorem \ref{generic algebraic} see \cite{Bug}. The proof of this theorem is originally due to Sprind\v{z}uk \cite{Spr1,Spr2}. For our applications we will only require Theorem \ref{generic algebraic} when $n=2.$

We call $x\in \mathbb{R}$ \emph{quadratically very well approximable} if $w_{2}(x)>2.$ We call $x\in \mathbb{R}$ \emph{quadratically Liouville} if $w_{2}(x)=\infty$. It is a consequence of a result due to Kasch and Volkmann \cite{KaVo} that the set of quadratically Liouville numbers has Hausdorff dimension zero. The following theorem demonstrates that there are badly approximable numbers that are quadratically very well approximable but not quadratically Liouville.

\begin{thm}
Let $D\subset\mathbb{N}$ be a finite set that contains at least two elements and $X_{D}:=\{x\in(0,1)\setminus \mathbb{Q}: (a_n(x))\in D^{\mathbb{N}}\}.$ Then $X_{D}$ contains numbers that are quadratically very well approximable but not quadratically Liouville.
\end{thm}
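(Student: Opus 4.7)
The plan is to realise $X_{D}$ as the attractor of a natural conformal IFS and then invoke Proposition~\ref{height prop} with $E$ equal to the set of real quadratic irrationals and $H$ the naive height of their minimal polynomial. Recall that every element of $X_{D}$ is automatically badly approximable because its partial quotients are bounded, so it suffices to produce elements of $X_{D}$ with $2<w_{2}(x)<\infty$.

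First I will take the IFS $\Phi=\{\phi_{d}\}_{d\in D}$ with $\phi_{d}(x)=1/(d+x)$, considered on a small open neighbourhood $V$ of its attractor. Each $\phi_{d}$ is a M\"{o}bius transformation, hence conformal with smooth derivative, and since $D$ is a finite subset of $\N$ the attractor sits in $[1/(\max D+1),1/\min D]$, so $V$ can be chosen bounded away from $0$ in such a way that $\sup_{x\in V}|\phi_{d}'(x)|<1$ for every $d\in D$. Standard theory of contracting IFS identifies the attractor with $\{[d_{1},d_{2},\ldots]:(d_{n})\in D^{\N}\}$, which by uniqueness of continued fraction expansions for irrationals coincides exactly with $X_{D}$; in particular this gives $\phi_{d}(X)\cap\phi_{d'}(X)=\emptyset$ for $d\neq d'$, so the strong separation condition holds.

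Now let $E$ be the set of real quadratic irrationals and $H(\alpha)$ the maximum modulus of the coefficients of the primitive minimal polynomial of $\alpha$ in $\mathbb{Z}[x]$. A direct substitution shows that if $a\alpha^{2}+b\alpha+c=0$ is primitive, then $\beta=\phi_{d}(\alpha)$ satisfies
\[
(ad^{2}-bd+c)\beta^{2}+(b-2ad)\beta+a=0,
\]
so by Gauss's lemma $H(\beta)\leq (d^{2}+d+1)H(\alpha)$. Hence $\Phi$ respects $E$ and $H$ with the constant $C=\max_{d\in D}(d^{2}+d+1)$. Also $E\cap X\neq\emptyset$ since for any $d\in D$ the purely periodic continued fraction $[d,d,d,\ldots]$ lies in $X$ and, by Lagrange's theorem, is a quadratic irrational.

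Applying Proposition~\ref{height prop} with any $l>3$ (say $l=4$) will then yield a subset of $X=X_{D}$ of positive Hausdorff dimension consisting of points $x$ satisfying $|x-\alpha|<H(\alpha)^{-l}$ for infinitely many $\alpha\in E$; every such $x$ has $w_{2}(x)\geq l-1>2$ and is therefore quadratically very well approximable. Since the quadratically Liouville numbers form a set of Hausdorff dimension zero by the result of Kasch and Volkmann, removing them from this positive-dimensional set leaves a nonempty remainder, producing the desired elements of $X_{D}$. The only step that needs real care is the height inequality $H(\phi_{d}(\alpha))\leq CH(\alpha)$, where one must check that passing from the induced (possibly non-primitive) quadratic polynomial to the primitive minimal one does not inflate the height; this is exactly what Gauss's lemma provides.
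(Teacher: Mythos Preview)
Your proposal is correct and follows essentially the same route as the paper: identify $X_{D}$ with the attractor of the Gauss--type IFS $\phi_{d}(x)=1/(d+x)$, verify strong separation, take $E$ to be the quadratic irrationals with naive height $H$, check $E\cap X_{D}\neq\emptyset$ via periodic continued fractions, compute the transformed quadratic $(ad^{2}-bd+c)x^{2}+(b-2ad)x+a$ to bound $H(\phi_{d}(\alpha))\leq CH(\alpha)$, invoke Proposition~\ref{height prop}, and strip out the zero-dimensional quadratically Liouville set via Kasch--Volkmann. The only cosmetic differences are your explicit choice of $l>3$ and your appeal to Gauss's lemma for the primitivity issue, where the paper simply observes the induced polynomial is an integer multiple of the minimal polynomial; your height constant $d^{2}+d+1$ is in fact slightly sharper than the paper's $3d^{2}$.
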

\begin{proof}
We start our proof by remarking that $X_{D}$ can be identified with the unique attractor for the IFS $\Phi=\{\phi_{i}\}_{i\in D}$ where $\phi_{i}(x)=\frac{1}{x+i}.$ Moreover, this $\Phi$ is a conformal iterated function system satisfying the open set condition. Therefore we can use Proposition \ref{height prop}. Let $E$ be the set of quadratic irrationals. We know by the aforementioned result of Kasch and Volkmann that the set of quadratically Liouville numbers has Hausdorff dimension zero. Therefore to complete our proof it suffices to show that we satisfy the remaining hypothesis of Proposition \ref{height prop}. Namely we need to show that $E\cap X_{D}\neq \emptyset$ and $\Phi$ respects $E$ and $H$. We have $E\cap X_{D}\neq \emptyset$ since every eventually periodic  $(a_{n})\in D^{\mathbb{N}}$ is the continued fraction expansion of a quadratic irrational. It remains to show that $\Phi$ respects $E$ and $H.$ Fix $\alpha\in E$ with minimal polynomial $ax^2+bx+c.$ Then $H(\alpha)=\max\{|a|,|b|,|c|\}.$ A simple calculation shows that $\phi_{i}(\alpha)$ is a root of the polynomial \begin{equation}
\label{new equation}
(ai^2-bi+c)x^2+(b-2ia)x+a.
\end{equation} Therefore $\phi_{i}(\alpha)$ is either a quadratic irrational or a rational number. However, if $\phi_{i}(\alpha)$ is rational then it can be shown that $\alpha$ is also rational, a contradiction. Therefore each element of $\Phi$ maps $E$ to $E$ and it remains to show we satisfy the growth condition. Since $\phi_{i}(\alpha)$ is a quadratic irrational, \eqref{new equation} cannot be factorised into two linear factors. Consequently, \eqref{new equation} can be written in the form $nM(x),$ where $n\in\mathbb{Z}\setminus\{0\}$ and $M(x)\in \mathbb{Z}[x]$ is the minimal polynomial of $\phi_{i}(\alpha)$. It follows that
\begin{align*}
H(\phi_{i}(\alpha))&\leq \max\{|ai^2-bi+c|, |b-2ia|, |a|\}\\
&\leq \max\{(|a|+|b|+|c|)i^2, (|b|+2|a|)i, |a|\}\\
&\leq 3 i^2 H(\alpha).
\end{align*}Taking $C=\max_{i\in D}\{3i^2\}$ we have $$H(\phi_{i}(\alpha))\leq C H(\alpha)$$ for all $i\in D$ and $\alpha\in E.$ Therefore $\Phi$ respects $E$ and $H$ and our proof is complete.
\end{proof}

\section{The overlapping case and further directions}
\label{final section}
We conclude this paper by proving that whenever our IFS contains an exact overlap there are no approximation regular pairs. We also include some discussion of the overlapping case and suggest some future directions.

Understanding the structure of overlapping attractors is a classical problem. For $\Phi$ a conformal iterated function system we always have $\dim_{H}(X)\leq \dim_{S}(X).$ We also trivially have the upper bound $\dim_{H}(X)\leq d$ where $d$ is the dimension of the ambient Euclidean space. These two bounds imply
\begin{equation}
\label{dimensionbounds}
\dim_{H}(X)\leq \min\{\dim_{S}(X),d\}.
\end{equation}Determining conditions under which we have equality in \eqref{dimensionbounds} is an active area of research, see \cite{Hoc,PerSol} and the references therein. There is a standard way of constructing examples for which we have strict inequality in \eqref{dimensionbounds}. We construct an IFS in such a way that there is an exact overlap, i.e., there exists $I,J\in\mathcal{D}^{n}$ such that $\phi_{I}=\phi_{J}$. This means we can remove one of these maps from our IFS and still be left with the same attractor. This new IFS has a strictly smaller similarity dimension which can lead to a strict inequality in \eqref{dimensionbounds}. It is conjectured that exact overlaps are the only mechanism by which we can have strict inequality in \eqref{dimensionbounds}. Interestingly, as far as the author knows, the only known condition which can result in a pair $(\Phi,z)$ failing to be approximation regular is when there is an exact overlap. This result is proved in the following theorem.

\begin{thm}
\label{overlap}
Let $\Phi$ be a conformal iterated function system containing an exact overlap. Then there are no approximation regular pairs.
\end{thm}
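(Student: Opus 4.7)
The plan is to exhibit, for every $z \in X$, a decreasing function $\theta$ that satisfies the divergence condition but for which $W(Diam(X_I)^{\dim_S(X)/\dim_H(X)}\theta(|I|),z)$ has Hausdorff dimension strictly less than $\dim_H(X)$, and in particular zero $\dim_H(X)$-dimensional Hausdorff measure. This violates both cases of the definition of approximation regular pair. The key mechanism is that an exact overlap collapses many words into the same map $\phi_I$, so the natural cover of $W(\Psi,z)$ by balls indexed by $I \in \mathcal{D}^n$ can be thinned to a cover indexed only by the distinct compositions $S_n := \{\phi_I : I \in \mathcal{D}^n\}$.

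Given $\phi_{I_0} = \phi_{J_0}$ with distinct $I_0, J_0 \in \mathcal{D}^{n_0}$, I would view $\Phi^{*} := S_{n_0}$ as a conformal IFS in its own right and observe that its pressure function $P^{*}$ satisfies $P^{*}(\dim_S(X)) < P(\dim_S(X)) = 0$, because at level $n_0$ the defining sum $\sum_{\phi \in S_{n_0}} \sup_x |\phi'(x)|^{\dim_S(X)}$ is strictly smaller than $\sum_{I \in \mathcal{D}^{n_0}} \sup_x |\phi_I'(x)|^{\dim_S(X)}$ by at least the term corresponding to the removed map $\phi_{J_0}$. This single-scale gap propagates to the pressure via Fekete's lemma applied to the submultiplicative sequence arising from bounded distortion for conformal IFS, so the similarity dimension $s^{*} := \dim_S(\Phi^{*})$ is strictly less than $\dim_S(X)$. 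For every $\varepsilon>0$ there then exist $C>0$ and $\rho = \rho(\varepsilon) \in (0,1)$ with
\[
\sum_{\phi \in S_n} Diam(\phi(X))^\alpha \leq C \rho^n \quad \text{for all } \alpha \geq s^{*} + \varepsilon \text{ and } n \in \mathbb{N}.
\]
Since $\Phi^{*}$ also has attractor $X$, the general bound $\dim_H(X) \leq s^{*}$ holds, and so $s^{*}\dim_H(X)/\dim_S(X) < \dim_H(X)$ whenever $\dim_H(X) > 0$.

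With this in hand I would set $\theta(n) = n^{-1/\dim_H(X)}$. Since $P(\dim_S(X)) = 0$ forces $\sum_{I \in \mathcal{D}^n} Diam(X_I)^{\dim_S(X)} \asymp 1$, the divergence condition becomes $\sum_n n^{-1} = \infty$. Collapsing the natural cover of $W(\Psi,z)$ to distinct compositions gives, for any $s > s^{*}\dim_H(X)/\dim_S(X)$,
\[
\mathcal{H}^s(W(\Psi,z)) \leq C' \liminf_{N \to \infty} \sum_{n \geq N} \theta(n)^s \sum_{\phi \in S_n} Diam(\phi(X))^{s\dim_S(X)/\dim_H(X)} \leq C'' \liminf_{N \to \infty} \sum_{n \geq N} n^{-s/\dim_H(X)} \rho^n = 0,
\]
since the exponential decay of the inner sum swamps the polynomial factor $\theta(n)^s$. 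Hence $\dim_H(W(\Psi,z)) \leq s^{*}\dim_H(X)/\dim_S(X) < \dim_H(X)$, and in particular $\mathcal{H}^{\dim_H(X)}(W(\Psi,z)) = 0$, contradicting both cases of the definition of an approximation regular pair for $(\Phi, z)$.

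The main obstacle is the rigorous derivation of $P^{*}(\dim_S(X)) < 0$ and the resulting exponential decay. Because new, non-exact coincidences can appear among longer compositions, one cannot simply count equivalence classes of words; the cleanest route is to treat $\Phi^{*}$ as a conformal IFS and invoke the bounded-distortion and submultiplicativity machinery underpinning the pressure formalism, together with strict monotonicity of the pressure when a map is removed from a conformal IFS at the critical exponent $\dim_S(X)$. Once this quantitative input is available, the remainder of the proof reduces to a standard Borel--Cantelli-style covering estimate.
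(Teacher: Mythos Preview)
Your proposal is correct and follows essentially the same strategy as the paper: pass to a smaller conformal IFS with the same attractor (the paper removes the single redundant map $\phi_{J_0}$ from $\{\phi_I\}_{I\in\D^{n_0}}$, you pass to the set $S_{n_0}$ of distinct level-$n_0$ compositions), observe that its similarity dimension is strictly below $\dim_S(X)$, and use the resulting exponential decay of the covering sums to force $\dim_H(W(\Psi,z))<\dim_H(X)$. One simplification worth noting: the paper just takes $\theta\equiv 1$, which is already (weakly) decreasing and satisfies the divergence condition by Bowen's formula, so the polynomial factor $\theta(n)^s$ in your covering estimate is unnecessary.
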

\begin{proof}
Suppose that $I,J\in \D^{k}$ are such that $\phi_{I}=\phi_{J}$. Let us fix $z\in X.$ It suffices to construct a function $\theta:\mathbb{N}\to\mathbb{R}_{\geq 0}$ such that we have divergence in \eqref{Divergence condition general}, yet
\begin{equation}
\label{dimension drop}
\dim_{H}(W(Diam(X_I)^{\dim_{S,\Phi}(X)/\dim_{H}(X)}\theta(|I|),z))<\dim_{H}(X).
\end{equation} Note that in \eqref{dimension drop} we have included the subscript $\Phi$ in the similarity dimension. This is to emphasise the dependence on $\Phi$. This dependence will be important in what follows.

Let $\theta(|I|)=1$ for all $I\in \D^{*},$ so our approximating function is simply $\Psi(I)=Diam(X_I)^{\dim_{S,\Phi}(X)/\dim_{H}(X)}$. The summation in \eqref{Divergence condition general} reduces to $$\sum_{n=1}^{\infty}\sum_{I\in \D^{n}}Diam(X_I)^{\dim_{S,\Phi}(X)}.$$ It is a consequence of Bowen's equation and \eqref{Pressure dimension} that this series diverges. It remains to show that \eqref{dimension drop} holds.

The attractor $X$ is also the attractor for the iterated function system $\Phi^{k}:=\{\phi_I\}_{I\in\D^{k}\setminus\{J\}}.$ This follows by iterating \eqref{Hutchinson's equation} and using the fact that we have an exact overlap. $\Phi^{k}$ is a conformal iterated function system, so we can consider it's similarity dimension $\dim_{S,\Phi^{k}}(X)$. As a consequence of the exact overlap we have $$\dim_{S,\Phi^{k}}(X)<\dim_{S,\Phi}(X).$$

Now let us choose $\epsilon>0$ such that
\begin{equation}
\label{extra decay}
\dim_{S,\Phi^{k}}(X)<\frac{\dim_{S,\Phi}(X)}{\dim_{H}(X)}\cdot (\dim_{H}(X)-\epsilon)
\end{equation}
Replicating arguments from the proof of statement $1$ from Theorem \ref{conformal theorem}, we can show that for any $\rho>0$ there exists $M\in \mathbb{N}$ sufficiently large such that we have the following bound
\begin{equation}
\label{measure bound}
\mathcal{H}^{\dim_{H}(X)-\epsilon}_{\rho}(W(Diam(X_I)^{\frac{\dim_{S,\Phi}(X)}{\dim_{H}(X)}},z))
\leq \sum_{n=M}^{\infty}\sum_{\substack{I\in \D^{n}\\ J \textrm{ is not a subword of } I}}Diam(X_I)^{\frac{\dim_{S,\Phi}(X)}{\dim_{H}(X)}\cdot (\dim_{H}(X)-\epsilon)}.
\end{equation}It is a consequence of \eqref{extra decay} and \eqref{Pressure dimension} that the second summation on the right hand side of \eqref{measure bound} tends to zero exponentially fast. Therefore the right hand side converges and $M$ can be chosen to make this summation arbitrarily small. It follows that $\mathcal{H}^{\dim_{H}(X)-\epsilon}_{\rho}(W(Diam(X_I)^{\frac{\dim_{S,\Phi}(X)}{\dim_{H}(X)}},z))=0,$ and since $\rho$ is arbitrary we must have $\mathcal{H}^{\dim_{H}(X)-\epsilon}(W(Diam(X_I)^{\frac{\dim_{S,\Phi}(X)}{\dim_{H}(X)}},z))=0.$ Thus \eqref{dimension drop} holds and our proof is complete.
\end{proof}
Theorem \ref{overlap} and the discussion at the start of this section give rise to several natural questions.

\begin{enumerate}
\item Is the only condition under which an IFS fails to be approximation regular when there is an exact overlap?
\item Can one relate the approximation regularity properties of a conformal IFS to other nice properties of the attractor? For example, can it be related to equality in \eqref{dimensionbounds}? Can it be related to the absolute continuity of certain natural measures supported on $X$?
\item Another natural direction to pursue is to consider more general attractors. In particular, one can ask what is the analogue of the above theory in the setting of self-affine sets, IFS's consisting of infinitely many contracting maps, and for randomly defined attractors. We currently have no results in this direction. We expect that for a self-affine set the analogue of the divergence condition \eqref{Divergence condition general} would have to take into account the rotations that might be present with the IFS. This was something we didn't have to consider for conformal iterated function systems.
\end{enumerate}

We believe that approximation regularity can be used as an effective tool to measure how much an attractor overlaps. It is possible however that such an approach is to blunt and a more subtle approach is required. Expecting the divergence of certain sums to be the deciding criteria in determining whether a limsup set has full measure/Hausdorff dimension could be wishful thinking. Instead of looking solely at the divergence of certain sums, one should perhaps put a greater emphasis on determining those approximating functions $\Psi$ for which $W(\Psi,z)$ is of full measure/Hausdorff dimension.
\\

\noindent \textbf{Acknowledgements.} The author would like to thank the anonymous referee whose comments improved the exposition of this article.

\end{document}